\documentclass[12pt]{amsart}
\usepackage{graphicx,psfrag,epsfig, color,float}
\usepackage{amssymb,amsmath,amscd,amsthm}
\usepackage{graphicx,psfrag,epsfig}
\usepackage{mathtools}
\usepackage[colorlinks,pdfdisplaydoctitle,linkcolor=blue,citecolor=red]{hyperref}
\usepackage{graphicx}
\usepackage{bbm}
\usepackage{hyperref}

\usepackage{tikz}

\newtheorem{theorem}{Theorem}[section]

\newtheorem{lemma}[theorem]{Lemma}
\newtheorem{remark}[theorem]{Remark}
\newtheorem{proposition}[theorem]{Proposition}

\newcommand{\thmref}[1]{Theorem~\ref{#1}}

\newcommand{\lemref}[1]{Lemma~\ref{#1}}

\setlength{\topmargin}{0mm} \setlength{\oddsidemargin}{0mm}
\setlength{\textwidth}{160mm} \setlength{\textheight}{220mm}

\newcommand{\no}{\nonumber}
\newcommand{\be}{\begin{equation}}

	\newcommand{\ee}{\end{equation}}
\newcommand{\bi}{\begin{itemize}}
	\newcommand{\ei}{\end{itemize}}
\newcommand{\br}{\begin{eqnarray}}
	\newcommand{\er}{\end{eqnarray}}

\newcommand{\eps}{\varepsilon}

\newcommand{\expE}{\mathbb{E}}

\newcommand{\commentout}[1]{}

\long\def\metanote#1#2{{\color{#1}\
\ifmmode\hbox\fi{\sffamily\mdseries\upshape [#2]}\ }}

\long\def\JN#1{\metanote{red!70!black}{{\tiny JN} #1}}

\def\stackunder#1#2{\mathrel{\mathop{#2}\limits_{#1}}}

\def\be{\color{black}}
\def\br{\color{red}}

\def\eps{{\varepsilon}}

\def\EXP{\mathbb{E}}

\def\TOR{\mathbb{T}}
\def\cube{\mathcal{Q}}
\def\real{\mathbb{R}}
\def \bbZ{\mathbb{Z}}
\def \ph{\varphi}
\def\naturals{\mathbb{N}}
\def\complex{\mathbb{C}}

\def\bm{\bar{m}}
\def\bv{{\bf v}}

\def\cL{\mathcal{L}}
\def\cK{\mathcal{K}}

\def\cN{\mathcal{N}}

\def\tc{{\tilde c}}

\def\rh{\varrho}

\date{}

\begin{document}

	\title[Branching diffusion in periodic media]{Asymptotic behavior of branching diffusion processes in periodic media}

\author[P. Hebbar]{Pratima Hebbar\textsuperscript{1}}
\address{%
  \textsuperscript{1} Department of Mathematics, 
Duke University, Box 90320,
  Durham, NC 27708.}
\email{pratima.hebbar@duke.edu}

\author[L. Koralov]{Leonid Koralov\textsuperscript{2}}
\address{%
  \textsuperscript{2} Department of Mathematics,
University of Maryland,
College Park, MD 20742-4015}
\email{koralov@math.umd.edu}

\author[J. Nolen]{James Nolen\textsuperscript{3}}
\address{%
  \textsuperscript{3} Department of Mathematics, 
Duke University, Box 90320,
  Durham, NC 27708.}
\email{nolen@math.duke.edu}

	\maketitle
	
	\begin{abstract} We study the asymptotic behavior of branching diffusion processes in periodic media. For a super-critical branching process, we distinguish two types of behavior for the normalized number of particles in a bounded domain, depending on the distance of the domain from the region where the bulk of the particles is located. At distances that grow linearly in time, we observe intermittency (i.e., the $k$-th moment dominates the $k$-th power of the first moment for some $k$), while, at distances that grow sub-linearly in time, we show that all the moments converge.  A key ingredient in our analysis is a sharp estimate of the transition kernel for the branching process, valid up to linear in time distances from the location of the initial particle.
	\end{abstract}
	
	\section{Introduction}
	Consider a collection of particles $Y_1(t), Y_2(t),\dots$ in $\real^d$ that move diffusively and independently according to
	\begin{align}
		dY_k(t) = b(Y_k(t)) \,dt + \sigma(Y_k) \,dW_k(t), \label{YSDEdef}
	\end{align}
	where $W_k$ denote independent Brownian motions in $\real^d$. Each particle independently branches into two particles or is annihilated at rates the depend on its location: a particle at $x \in \real^d$ branches into two particles at rate $\alpha(x) \geq 0$, and is annihilated at rate $\beta(x) \geq 0$. The newly created particles starting at the location of their parent then repeat this process independently of each other. This process is referred to as a $d$-dimensional branching diffusion process. We suppose that the drift $b(x)$, the non-degenerate diffusion matrix $\sigma(x)$, and the rates $\alpha(x)$ and $\beta(x)$ are all Lipschitz continuous and $\bbZ^d$ periodic (and thus bounded). That is, $b(x + k) = b(x)$ for all $x \in \real^d$ and $k \in \mathbb{Z}^d$, and similarly for $\sigma$, $\alpha$ and $\beta$. In addition, we assume that all the matrix $a(x)$ is $C^1(\real^d)$.
	
	The main topic of interest here is the limiting behavior of branching diffusion processes in periodic media in the supercritical regime. Our main goal is to study the distribution of the number of particles in regions whose spatial location depends on time. With probability that tends to one, the entire population is confined to a region that grows linearly in time (see Chapter 7.3 in the book of Freidlin \cite{freidlin1985}). The effective drift of a branching process can be understood heuristically as the speed at which the bulk of the particles is traveling in space. We will give a precise definition of the effective drift later in Section \ref{asympde}. For a bounded region at a fixed location, assuming that the effective drift is zero, the structure of the population is similar to that in the compact setting. See, for example, Engl\"ander, Harris, Kyprianou \cite{Englander2010} and references therein. For a time dependent region inside the linearly growing front, the normalized number of particles converges almost surely (see, for example, Uchiyama \cite{uchiyama1982} in the case of constant coefficients). The nature of this convergence, however, depends on how distant the region is from the location of the initial particle (assuming for simplicity that the effective drift is zero). At linear in time distances, we will show that  intermittency may occur (i.e., the $k$-th moment dominates the $k$-th power of the first moment for some $k$), while, at distances that grow sub-linearly in time, we will prove that all the moments converge. For the case of homogeneous media and for the case of compactly supported branching term, this question has been studied in the work of Koralov \cite{Kor13} as well as Koralov, Molchanov \cite{KorMol}. 
	
	Given a single particle initially at $x \in \real^d$, the transition kernel $u(t,x,y)$ is defined by 
	\[
	\int_{\real^d} u(t,x,y) f(y) \,dy = \EXP_x \left[ \sum_{k} f(Y_k(t)) \right],
	\]
	where $f\in C_b(\real)$  and the sum is over all particles alive at time $t \geq 0$.   The function $(t,y) \mapsto u(t,x,y)$ satisfies 
	\begin{equation} \label{hhrr1}
		\partial_t u = \cL_x u, \quad x,y \in \real^d, \;\; t > 0,
	\end{equation}
	with initial condition
	\[
	u(0,\cdot, y) = \delta_y(\cdot),
	\]
	where $\cL$ is the operator
	\begin{equation} \label{gener0}
		\cL u = \frac{1}{2} \sum\limits_{ij = 1}^{d} a_{ij}(x) \frac{\partial^2 u}{\partial x_i \partial x_j}
		+ \sum\limits_{i = 1}^{d} b_i(x) \frac{\partial u}{\partial x_i} + r(x)u,
	\end{equation}
	$a(x) = \sigma(x) \sigma^*(x)$, and $r(x) = \alpha(x) - \beta (x)$.  The operator $\cL - r(x)$ is the generator of the process \eqref{YSDEdef}.  
The first step in our analysis is a precise asymptotic description of the transition kernel $u(t,x,y)$, valid up to the large deviation scale, that is, for $\|x -y\|  = O(t)$.
	
	There are two main parts in the asymptotic analysis of $u(t,x,y)$.  First, we transform the operator $\cL$ in order to alter the effective drift of the process, while simultaneously turning the branching rate into a constant. Thus, the problem reduces to studying the transition kernel of an altered diffusion process near the diagonal, where $\|x - y\| = O(\sqrt{t})$. The next part is to prove a local limit theorem for the new transformed kernel at this diffusive scale.  
	
	The ingredients we use to obtain the asymptotics of the transition kernel - exponential change of measure, homogenization and local limit theorems for the resulting diffusion process are fairly standard. In spite of this, the precise asymptotics of the transition kernel that holds up to linear in time distances has not been published, as far as we know (in 2007, Agmon gave a talk \cite{Agmon} where this result was announced). Here, we provide a simple probabilistic proof that establishes  uniform asymptotics of the transition kernel for $d$-dimensional second-order parabolic operators with periodic coefficients. The precise asymptotics in the $1$-dimensional case has been obtained previously by Tsuchida in \cite{TT}.
	
	Prior results in this direction, in $d$ dimensions, give estimates of the heat kernel, as opposed to precise asymptotics. The seminal work of Aronson \cite{A68} gives global estimates on the heat kernel, while in \cite{Norris} Norris proves a generalization of Aronson's Gaussian bounds in the case of periodic coefficients and identifies an effective drift of the heat flow. The upper and lower bounds of Norris \cite{Norris} have different constant prefactor in front of the Gaussian term, although the logarithmic asymptotics are sharp. We provide a stronger result that correctly identifies the main term of the asymptotic expansion of the transition kernel, which is precise up to the domain of large deviations (up to distances in space that are linear in time). The asymptotics of Green's function for the corresponding elliptic problem for different values of the spectral parameter has been studied extensively (see, e.g., Murata, Tsuchida \cite{murata2006}, Kuchment, Raich \cite{kuchment2012}).
	
	The asymptotics proved in Section \ref{asympde} plays a crucial role in analyzing the behavior of the branching diffusion process in periodic media, in Section \ref{inter}. The bulk of the particles will be seen to be located around of $\bar{\bv} t$ where $\bar{\bv}$ denotes the effective drift of the process (defined later at \eqref{effectivedrift}). Let $n^y(t,x)$ denote the number of particles  located in a unit $d$-dimensional cube containing $y\in \real^d$, assuming that, initially, there is one particle located at $x\in \real^d$. In Section \ref{inter1}, for a super-critical branching process, we study the asymptotic behavior of $n^y(t,x)$ in the domain of large deviations, that is when $\|y-\bar{\bv} t\| = O(t)$. We observe the effect of intermittency, that is, for each vector $\bv \in \real^d$, $\bv \neq \bar{\bv}$, there exists $k \geq 2$ such that the $k$-th moment of $n^{\bv t}(t,x)$ grows exponentially faster than the $k$-th power of the first moment. This result was first proved in \cite{KorMol} in the case of a super-critical branching diffusion process in $\real^d$ with identity diffusion matrix, zero drift, and a positive constant potential. Here, in contrast to \cite{KorMol}, we do not have explicit expressions for the transition kernel, but only have asymptotic formulas. This makes the analysis of the higher order moments more involved.  
	
	In Section \ref{totnumber}, we define a sequence of periodic functions $f_k(x)$ that serve as limits for the $k$-th moments  of $N(t,x)/\EXP(N(t,x))$, where  $N(t,x)$ denotes the total number of particles in $\real^d$, assuming that, initially, there is one particle located~at~$x~\in~\real^d$. 
	
	In Section \ref{near0}, we again study $n^y(t,x)$, but here we assume that $\|y-\bar{\bv}t\| = o(t)$. That is, we study the distribution of particles near the region where the bulk of the particles is located (i.e, near $\bar{\bv} t$).  In this region, we show that the $k$-th moment of $n^y(t,x)/\EXP(n^y(t,x))$ converges to the periodic function $f_k(x)$ identified in~Section~\ref{totnumber}. 

There have been several other works on different aspects of branching diffusions in periodic media, and the topic is closely related to reaction-diffusion equations with periodic coefficients. After presenting our results more precisely below, we discuss the relation to some of these other works in Section \ref{sec:discussion}.\\
\\
\\

{\bf Acknowledgements:} The work of James Nolen partially funded by grant DMS-1351653 from the US National Science Foundation and the work of Leonid Koralov and Pratima Hebbar was partially funded by grant W911NF1710419 from the Army Research Office.

\vspace{0.2in}

\section{Asymptotics of the transition kernel}\label{asympde}
Given a positive function  $h:\real^d \to \real$ that is sufficiently smooth, the $h$~--~transform of the operator $\cL$ (given in \eqref{gener0}) is defined as 
\[
(\cL_h f)(x) = \frac{1}{h(x)} \cL(h(x)f(x)).
\]
for each real valued $C^2(\real^d)$ function $f$.

For each $t\geq 0$ and $x,y \in \real^d$, the transition kernel $u^h(t,x,y)$ corresponding to $\cL_h$ satisfies:
\begin{align}
u^h(t,x,y) = \frac{1}{h(x)}u(t,x,y) h(y), \label{phtrans}
\end{align}
where $u(t,x,y)$, satisfying \eqref{hhrr1}, is the transition kernel corresponding to $\cL$ (see Theorem 4.1.1 of \cite{Pinsky}). We choose $h$ from among a special family of eigenfunctions of $\cL$ having exponential growth in a given direction. For $\zeta \in \real^d$, let $\ph_\zeta$ be the principal positive periodic eigenfunction of the operator $e^{-\zeta \cdot x} \cL( e^{\zeta \cdot x} \cdot)$. That is $\ph_\zeta$ satisfies
\begin{equation}\label{phieig}
e^{-\zeta \cdot x} \cL( e^{\zeta \cdot x} \ph_\zeta) = \mu(\zeta) \ph_\zeta, 
\end{equation}
with eigenvalue $\mu(\zeta) \in \real$. Let $\ph_\zeta^*$ denote the solution of the adjoint problem, that is,
\[
e^{\zeta \cdot x} \cL^*(e^{-\zeta \cdot x} \ph_\zeta^*) = \mu^*(\zeta) \ph_\zeta^*
\]
where $\mu^*(\zeta)$ is the principal eigenvalue of the adjoint operator, and hence $\mu^*(\zeta) = \mu(\zeta)$. We normalize  $\ph_\zeta$ and $\ph_\zeta^*$ by 
\begin{equation}\label{noreig}
\int_{[0,1)^d} \ph_\zeta(x) \ph_\zeta^*(x)\,dx = 1 = \int_{[0,1)^d} \ph_\zeta^*(x)\,dx.
\end{equation}
Now we define $h_\zeta$ by
\[
h_\zeta(x) = e^{\zeta \cdot x} \ph_\zeta(x), \quad\text{that is,} \quad \cL h_\zeta = \mu(\zeta) h_\zeta.
\]
With this choice of $h = h_\zeta$, (\ref{phtrans}) can be written as
\begin{align}
u(t,x,y)& =  \frac{h_\zeta(x)}{h_\zeta(y)} u^{h_\zeta}(t,x,y) \no \\
& =  e^{- t \left( \zeta \cdot \frac{(y - x)}{t} - \mu(\zeta) \right)} \frac{\ph_\zeta(x)}{\ph_\zeta(y)}  e^{-t\mu(\zeta)}u^{h_\zeta}(t,x,y), \label{pkrep}
\end{align}
Let us define $p^\zeta(t,x,y) := e^{-t\mu(\zeta)}u^{h_\zeta}(t,x,y)$. The function $p^\zeta(t,x,y)$ is the transition kernel for the operator
\begin{align} 
\cK_\zeta w & := (\cL_{h_\zeta} - \mu(\zeta)) w  \label{defopk} \\
& = \frac{1}{e^{\zeta \cdot x} \ph_\zeta(x)} \cL (e^{\zeta \cdot x} \ph_\zeta(x) w(x)) - \mu(\zeta) w \no \\
& = \frac{1}{2} \sum_{ij} a_{ij} w_{x_i x_j} + \sum_{i} \left( b_i + \sum_j a_{ij} ( \zeta_j + \partial_{x_j} \log \ph_\zeta) \right)w_{x_i}\label{kv}.
\end{align}
Compared to $\cL$, this operators $\cK_\zeta$ has an additional periodic drift $a \nabla \log h_\zeta = a \zeta + a \nabla \log \ph_\zeta$, but no branching term $r(x)$. Let $\psi_\zeta$ and $\psi_\zeta^*$ denote the principal eigenfunctions corresponding to the principal eigenvalue (which is equal to zero) of the operator $\cK_\zeta$ and $\cK_\zeta^*$ on the torus, respectively, and suppose that $$\int_{[0,1)^d} \psi_\zeta(x) \psi_\zeta^*(x)\,dx = 1 = \int_{[0,1)^d} \psi_\zeta^*(x) \,dx.$$ It is easy to see that 
\[
\psi_\zeta(x) \equiv 1 ~~~ \text{and}~~~ \psi_\zeta^*(x) \equiv \ph_\zeta^*(x)\ph_\zeta(x).
\]
Now we choose the direction $\zeta\in \real^d$ in an optimal way. Let $\Phi$ denote the Legendre transform of~$\mu(\zeta)$:
\begin{align}
\Phi(c) = \sup_{\zeta \in \real^d} \left( \zeta \cdot c - \mu(\zeta) \right). \label{Phidef}
\end{align}
The properties of $\mu$, from Theorem 2.10 in Chapter 8 of the book of Pinsky \cite{Pinsky}, guarantee that $\Phi \in C^2$ is well-defined.  In particular $\Phi$ is strictly convex.  For each $c \in \real^d$, the supremum in (\ref{Phidef}) is attained at a unique point which will be denoted by $\hat{\zeta} = \hat{\zeta}(c)$, that is
\[
\Phi(c) =\hat \zeta \cdot c - \mu(\hat \zeta).
\]
Thus, $ c = \nabla \mu(\hat \zeta)$. In addition, for each $c\in \real^d$, we have  $\nabla \Phi(c) = \hat \zeta(c)$. Now, given $(t,x,y) \in \real^+\times \real^d \times \real^d$, let 
\begin{equation}\label{valc}
c = c(t,x,y) = \frac{y-x}{t}.
\end{equation}
Corresponding to this $c$, we choose the unique $\hat{\zeta}$ satisfying: 
\begin{equation}\label{valhatv}
c = \nabla \mu(\hat \zeta) ~~~~\text{or equivalently} ~~~~~\nabla \Phi(c) = \hat \zeta.
\end{equation}
Substituting $\zeta =  \hat \zeta(c)$ in to (\ref{pkrep}), we obtain the identity
\begin{align}
u(t,x,y)&  =  e^{- t \Phi(\frac{y-x}{t})} \frac{\ph_{\hat \zeta}(x)}{\ph_{\hat \zeta}(y)} p^{\hat \zeta}(t,x,y), \quad \quad x,y \in \real^d,\;t > 0 \label{pkrep2}
\end{align}
Therefore, to obtain the exact asymptotics of $u(t,x,y)$ in the domain of large deviations, we need to choose $\hat{\zeta}$ appropriately, and provide an exact asymptotics of the transition density $p^{\hat \zeta}(t,x,y)$. The reason for introducing this transformed kernel is that, momentarily assuming $y = y(t) = x+ct$, the effective drift of the process corresponding to $p^{\hat \zeta}(t,x,y)$ is $c$. And therefore, the problem reduces to estimating the density of the transition kernel of the operator $\cK_{\hat{\zeta}}$ at a diffusive scale.  The following proposition, which will be proved later, gives the exact asymptotics of the transition density $p^{\hat \zeta}(t,x,y)$.
\begin{proposition} \label{prop:kasymp}
Fix $L_0 > 0$. For $(t,x,y) \in \real^+\times \real^d \times \real^d$, define $\hat \zeta = \hat{\zeta}(t,x,y) =  \nabla \Phi(\frac{y-x}{t})$. Then
\begin{align}
\lim_{t \to \infty} \sup_{ \|x - y\| \leq t L_0 } \Big\|\frac{1}{ \ph_{\hat \zeta} (y) \ph^*_{\hat \zeta}(y)} \det[ D^2 \Phi(\frac{y-x}{t}) ]^{-1/2} (2 \pi t)^{d/2}  p^{\hat \zeta}(t,x, y) - 1\Big| = 0.\label{kdrift}
\end{align}

\end{proposition}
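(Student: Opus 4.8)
\emph{Plan of proof.} The plan is to read \eqref{kdrift} as a local central limit theorem for the periodic diffusion generated by $\cK_{\hat\zeta}$, made uniform in the tilt parameter. Fix $L_0>0$ and set $K=\{\nabla\Phi(c):\|c\|\le L_0\}$; since $\Phi\in C^2$, the set $K\subset\real^d$ is compact, and because $p^\zeta(t,x,y)$ is jointly $\bbZ^d$-periodic in $(x,y)$ it suffices to prove the claim uniformly over $\zeta\in K$, $x\in[0,1)^d$, and $y\in\real^d$. By \eqref{kv}, $\cK_\zeta$ is a uniformly elliptic second-order operator with Lipschitz $\bbZ^d$-periodic coefficients and \emph{no} zeroth-order term, with all relevant structural constants depending continuously on $\zeta$; its principal eigenvalue on $\TOR^d$ is $0$, with $\psi_\zeta\equiv1$ and invariant density $\psi_\zeta^*=\ph_\zeta\ph_\zeta^*$. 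Hence $\cK_\zeta$ generates a conservative diffusion $X^\zeta$ on $\real^d$ with transition density $p^\zeta(t,x,\cdot)$, whose projection to $\TOR^d$ is ergodic with invariant law $\psi_\zeta^*(x)\,dx$.

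First I would record the homogenization of $X^\zeta$: a central limit theorem $(X^\zeta_t-\bar b(\zeta)\,t)/\sqrt t\Rightarrow N(0,\Sigma(\zeta))$, where the effective coefficients are the derivatives of the tilted eigenvalue, $\bar b(\zeta)=\nabla\mu(\zeta)$ and $\Sigma(\zeta)=D^2\mu(\zeta)$. The identification of $\bar b$ and $\Sigma$ follows by differentiating the eigenvalue relation \eqref{phieig} in $\zeta$ once and twice and solving the resulting equations via the Fredholm alternative (standard second-order perturbation theory for the simple principal eigenvalue), which also shows that $\bar b(\zeta)$ and $\Sigma(\zeta)$ vary smoothly with $\zeta$ and that $\Sigma(\zeta)>0$, uniformly for $\zeta\in K$.

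Next I would upgrade the weak convergence to a local limit theorem for the density: for any fixed $C>0$, uniformly in $\zeta\in K$, $x\in[0,1)^d$, and $y$ with $\|y-x-\bar b(\zeta)t\|\le C\sqrt t$,
\[
p^\zeta(t,x,y)=\frac{\psi_\zeta^*(y)}{(2\pi t)^{d/2}\sqrt{\det\Sigma(\zeta)}}\,\exp\!\left(-\frac{(y-x-\bar b(\zeta)t)\cdot\Sigma(\zeta)^{-1}(y-x-\bar b(\zeta)t)}{2t}\right)(1+o(1)).
\]
Probabilistically, one combines the CLT with the periodic corrector, so that $(X^\zeta_t-\bar b(\zeta)t)/\sqrt t$ and the fast variable $X^\zeta_t\bmod\bbZ^d$ become asymptotically independent, the latter equilibrating to $\psi_\zeta^*$; Aronson-type Gaussian bounds and interior Schauder estimates for the family $\{\cK_\zeta\}_{\zeta\in K}$ give equicontinuity of the rescaled densities $t^{d/2}p^\zeta(t,x,x+\bar b(\zeta)t+\sqrt t\,z)$, which promotes the weak convergence to locally uniform convergence of densities; the prefactor $\psi_\zeta^*(y)$ is precisely the invariant density of the fast component, read off at the endpoint $y$. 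I expect this step to be the main obstacle: every constant in the Aronson bounds, the Schauder estimates, and the mixing estimates on $\TOR^d$ must be taken uniform over $\zeta\in K$, which is legitimate because the coefficients of $\cK_\zeta$ — together with their ellipticity constants and moduli of continuity — depend continuously on $\zeta$ on the compact set $K$.

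Finally, specialize to $\zeta=\hat\zeta(t,x,y)=\nabla\Phi\bigl(\tfrac{y-x}{t}\bigr)$. By construction $\bar b(\hat\zeta)=\nabla\mu(\hat\zeta)=\tfrac{y-x}{t}$, so $y-x-\bar b(\hat\zeta)t=0$ and the Gaussian exponent vanishes identically; the displayed local limit theorem applies with zero centering and yields $p^{\hat\zeta}(t,x,y)=\psi^*_{\hat\zeta}(y)\,(2\pi t)^{-d/2}\det[\Sigma(\hat\zeta)]^{-1/2}(1+o(1))$, uniformly over $\|x-y\|\le tL_0$. Since $\Phi$ and $\mu$ are $C^2$ Legendre duals with $\nabla\Phi(c)=\hat\zeta$ and $\nabla\mu(\hat\zeta)=c$, differentiating these inverse relations gives $D^2\Phi(c)=\bigl(D^2\mu(\hat\zeta)\bigr)^{-1}=\Sigma(\hat\zeta)^{-1}$, so $\det[D^2\Phi(\tfrac{y-x}{t})]^{-1/2}=\sqrt{\det\Sigma(\hat\zeta)}$; combined with $\psi^*_{\hat\zeta}=\ph_{\hat\zeta}\ph^*_{\hat\zeta}$ this makes the bracketed quantity in \eqref{kdrift} equal to $1+o(1)$, which is the assertion.
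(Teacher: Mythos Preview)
Your overall architecture is sound and matches the paper at both ends: the identification $\bar b(\zeta)=\nabla\mu(\zeta)$, $\Sigma(\zeta)=D^2\mu(\zeta)$, the Legendre duality $D^2\Phi(c)=\Sigma(\hat\zeta)^{-1}$, and the specialization to $\hat\zeta$ so that the centering vanishes are all exactly as in the paper. The route to the central local limit theorem, however, is genuinely different. The paper does not argue via ``CLT plus asymptotic independence of slow and fast variables''; instead it introduces Fourier kernels $Q_\zeta(\theta,t)f(x)=\EXP_x^\zeta\bigl[f(X_t)e^{i\theta([X_t]-[x]-\ell(\zeta)t)}\bigr]$ acting on periodic continuous functions, proves a spectral decomposition $Q_\zeta(\theta,t)=\lambda(\zeta,\theta)^t(\langle\psi_\zeta^*,\cdot\rangle+M_\zeta)+N_\zeta$ following Hennion--Herv\'e, and uses this to obtain the \emph{integrated} local limit theorem (convergence of $\int_{[0,1)^d}p^\zeta(t,x,z+r)f(r)\,dr$ for periodic $f$) uniformly over a compact set of $\zeta$. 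Only then does it invoke the Schauder gradient bound $\|\nabla_y p^\zeta\|\le C t^{-d/2}$ to pass from the integrated statement to the pointwise one.

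There is a real gap in your equicontinuity step. The rescaled density you write down, $z\mapsto t^{d/2}p^\zeta(t,x,x+\bar b(\zeta)t+\sqrt t\,z)$, is \emph{not} equicontinuous in $z$: Schauder gives $|\nabla_y p^\zeta|\le Ct^{-d/2}$, and the chain rule then gives a $z$-gradient of order $t^{d/2}\cdot\sqrt t\cdot t^{-d/2}=\sqrt t$, which blows up. Equicontinuity does hold in the original variable $y$, but the weak convergence you have from the homogenization CLT is in the \emph{slow} variable $z$, not in $y$; and the target density $\psi_\zeta^*(y)\cdot g_{\Sigma}(z)$ oscillates at unit scale in $y$ through the factor $\psi_\zeta^*(y)$. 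So the two-scale structure blocks the direct ``weak $+$ equicontinuous $\Rightarrow$ uniform'' argument. What you actually need is an integrated LLT on unit cubes against periodic test functions --- precisely the content of the paper's Lemma~\ref{lem222} --- after which the gradient bound does let you replace $f$ by a delta. Your ingredient ``asymptotic independence of $(Z_t,X_t\bmod\bbZ^d)$'' is morally the right one, but extracting from it the measure of a single lattice cube (a set of $Z_t$-measure $\sim t^{-d/2}$) is itself the local limit statement; you still owe a mechanism (Fourier on $\bbZ^d$, Edgeworth-type expansion, or an explicit two-scale parametrix) to bridge that gap.
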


From Proposition \ref{prop:kasymp}, the following theorem now follows easily, giving the exact asymptotics of $u(t,x,y)$. As we have mentioned, this result was announced in a talk of Agmon \cite{Agmon} in 2007:

\begin{theorem}\label{thmmain1}
Fix $L > 0$.  The following asymptotic relation holds as $t \to \infty$ for all $x,y \in \real^d$ such that $\|y - x\| \leq L t$:
\begin{align}
u(t,x,y)&  =  (2 \pi t)^{-d/2} \det[ D^2 \Phi(\frac{y-x}{t}) ]^{1/2}  e^{- t \Phi(\frac{y-x}{t})} \ph_{\hat \zeta}(x) \ph^*_{\hat \zeta}(y)    \left[ 1 + o_L(1) \right], \label{ldscalebound}
\end{align}
where $\hat \zeta = \hat{\zeta}(t,x,y) = \nabla \Phi(\frac{y-x}{t})$. 
\end{theorem}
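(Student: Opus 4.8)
The plan is to derive the asymptotics of $u(t,x,y)$ directly from the representation \eqref{pkrep2} together with Proposition \ref{prop:kasymp}. Recall that \eqref{pkrep2} reads
\[
u(t,x,y) = e^{-t\Phi(\frac{y-x}{t})}\,\frac{\ph_{\hat\zeta}(x)}{\ph_{\hat\zeta}(y)}\,p^{\hat\zeta}(t,x,y),
\]
with $\hat\zeta = \hat\zeta(t,x,y) = \nabla\Phi(\frac{y-x}{t})$. This identity is exact for all $t>0$ and all $x,y$, so no approximation has been made yet; the only analytic input needed is the behavior of the transformed kernel $p^{\hat\zeta}(t,x,y)$, which is precisely what Proposition \ref{prop:kasymp} supplies.

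First I would fix $L>0$ and restrict attention to the region $\|y-x\|\le Lt$. On this set, $c = \frac{y-x}{t}$ ranges over the compact ball $\{\|c\|\le L\}$, and since $\Phi\in C^2$ is strictly convex with $\nabla\Phi$ a diffeomorphism, $\hat\zeta = \nabla\Phi(c)$ ranges over a corresponding compact set of $\zeta$-values; consequently $\ph_{\hat\zeta}$, $\ph^*_{\hat\zeta}$, and $D^2\Phi(c)$ all stay in compact sets bounded away from their degenerate values (in particular $\ph_{\hat\zeta}, \ph^*_{\hat\zeta}$ are bounded above and below by positive constants uniformly in this range, by the continuity of the principal eigenfunction/eigenvalue in the parameter $\zeta$ — this is part of the standard theory, e.g. Pinsky Ch.~8, already invoked in the excerpt). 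Next, apply Proposition \ref{prop:kasymp} with $L_0 = L$: it gives, uniformly over $\|x-y\|\le Lt$,
\[
p^{\hat\zeta}(t,x,y) = (2\pi t)^{-d/2}\,\det\!\big[D^2\Phi(\tfrac{y-x}{t})\big]^{1/2}\,\ph_{\hat\zeta}(y)\,\ph^*_{\hat\zeta}(y)\,\big[1 + o_L(1)\big].
\]
Substituting this into the exact identity \eqref{pkrep2}, the factor $\ph_{\hat\zeta}(y)$ in the numerator cancels the $\ph_{\hat\zeta}(y)$ in the denominator of $\frac{\ph_{\hat\zeta}(x)}{\ph_{\hat\zeta}(y)}$, leaving exactly
\[
u(t,x,y) = (2\pi t)^{-d/2}\,\det\!\big[D^2\Phi(\tfrac{y-x}{t})\big]^{1/2}\,e^{-t\Phi(\frac{y-x}{t})}\,\ph_{\hat\zeta}(x)\,\ph^*_{\hat\zeta}(y)\,\big[1+o_L(1)\big],
\]
which is \eqref{ldscalebound}. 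The uniformity of the error $o_L(1)$ over $\|y-x\|\le Lt$ is inherited directly from the uniformity in Proposition \ref{prop:kasymp}, since the remaining factors $\ph_{\hat\zeta}(x)$, $\det[D^2\Phi]^{1/2}$ are uniformly bounded above and below on the relevant compact parameter range and therefore do not affect the order of the multiplicative error.

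In this argument there is no real obstacle: all the work has been front-loaded into Proposition \ref{prop:kasymp}, and the derivation of the theorem is a one-line substitution plus a cancellation. The only point requiring a small amount of care is the claim that $\ph_\zeta$, $\ph^*_\zeta$, and $D^2\Phi$ vary continuously (hence are uniformly controlled) as $c$ ranges over $\{\|c\|\le L\}$; this follows from the smooth dependence of the principal eigenpair of $e^{-\zeta\cdot x}\cL(e^{\zeta\cdot x}\,\cdot)$ on $\zeta$ and the $C^2$ strict convexity of $\Phi$ asserted after \eqref{Phidef}, both of which are already available from the cited results of Pinsky. I would state these continuity/boundedness facts explicitly as a short preliminary remark, then conclude as above.
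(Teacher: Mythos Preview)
Your proposal is correct and follows essentially the same approach as the paper: substitute the asymptotic for $p^{\hat\zeta}(t,x,y)$ from Proposition~\ref{prop:kasymp} into the exact identity \eqref{pkrep2}, cancel the $\ph_{\hat\zeta}(y)$ factors, and inherit the uniform $o_L(1)$ error. The paper's proof is even terser than yours; your added remarks on the uniform boundedness of $\ph_{\hat\zeta}$, $\ph^*_{\hat\zeta}$, and $D^2\Phi$ over the compact parameter range are a welcome clarification (the paper establishes the analogous lower bound \eqref{philower} within the proof of Proposition~\ref{prop:kasymp}).
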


\begin{proof}[Proof of Theorem \ref{thmmain1}]
Fix $L >0$. From Proposition \ref{prop:kasymp} and \eqref{pkrep2}, for all $(t,x,y) \in \real^+\times \real^d \times \real^d$ with $\frac{\|y-x\|}{t} \leq L$, we obtain

\begin{align*}
u(t,x,y)   & =  e^{- t \Phi(\frac{ y - x}{t})} \frac{\ph_{\hat \zeta}(x)}{\ph_{\hat \zeta}(y)} p^{\hat \zeta}(t,x,y)\\ 
& = (\sqrt{2\pi t})^{-d}[\det D^2\Phi\big(\frac{y-x}{t}\big)\big]^{1/2}e^{- t \Phi(\frac{ y - x}{t})} \ph_{\hat \zeta}(x)\ph_{\hat{\zeta}}^*(y)(1  +o(1)),
\end{align*}
uniformly for $\|y-x\|\leq Lt$.	This concludes the proof of Theorem \ref{thmmain1}.
\end{proof}

Since $\Phi$ is strictly convex, we define $\bar{\bv} \in \real^d$ to be the unique minimizer of $\Phi$:
\begin{equation}
\Phi(\bar{\bv}) = \min_{v \in \real^d} \Phi(v) = - \mu(0) \label{effectivedrift}
\end{equation}
We call this $\bar{\bv}$ the effective drift of the branching diffusion process.  The logarithmic asymptotics in Theorem \ref{thmmain1} imply that a majority of the particles are located where $|y - x - \bar{\bv} t| = o(t)$. 

The bounds \eqref{ldscalebound} are valid at the large deviation scale, where $\|y - x\|  \leq O(t)$.  The following Aronson-type estimate provides a Gausian bound on the $u$ that holds for all $x,y  \in \real^d$, although it is less precise than \eqref{ldscalebound}.  It is a consequence of Theorem 1.1 from Norris \cite{Norris}:

\begin{lemma}\label{aronlem}
Let $\bar {\bv}$ be the effective drift. There is a constant $c>0$ such that
			\begin{equation}\label{aronsonforf}
				u(t,x,y + \bar {\bv} t) \leq  c t^{-d/2}\exp\left(- t \Phi(\bar {\bv}) - \frac{\|y-x\|^2}{ct}\right),  \quad \quad \forall \;\;x,y \in \real^d,\;\;t > 0.
			\end{equation}
		\end{lemma}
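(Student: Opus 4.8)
The plan is to strip the zeroth-order term $r(x)$ off $\cL$ via the $h$-transform of Section~\ref{asympde} at $\zeta=0$, and then to quote the global Gaussian heat-kernel bound of Norris~\cite{Norris} for the resulting periodic diffusion. Take $h=h_0=\ph_0$, the positive $\bbZ^d$-periodic principal eigenfunction of $\cL$ itself, so that $\cL\ph_0=\mu(0)\ph_0$; being continuous and periodic, $\ph_0$ satisfies $C^{-1}\le\ph_0\le C$ on $\real^d$ for some $C\ge1$. Specializing \eqref{phtrans} and the definition $p^\zeta:=e^{-t\mu(\zeta)}u^{h_\zeta}$ to $\zeta=0$ gives $u(t,x,z)=\frac{\ph_0(x)}{\ph_0(z)}\,e^{t\mu(0)}\,p^0(t,x,z)$, where $p^0(t,x,\cdot)$ is the (conservative) transition density of the operator $\cK_0$ from \eqref{kv} with $\zeta=0$ — a second-order elliptic operator with Lipschitz $\bbZ^d$-periodic coefficients and no zeroth-order term. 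Since $\Phi(\bar{\bv})=-\mu(0)$ by \eqref{effectivedrift} and $\ph_0$ is bounded above and below, it then suffices to exhibit $c>0$ with $p^0(t,x,y+\bar{\bv}t)\le c\,t^{-d/2}\exp\!\big(-\|y-x\|^2/(ct)\big)$ for all $x,y\in\real^d$, $t>0$.

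For this I would invoke Theorem~1.1 of Norris~\cite{Norris}: since $a\in C^1(\real^d)$, the principal part $\frac12 a_{ij}\partial_{x_i x_j}$ of $\cK_0$ can be rewritten in divergence form modulo a bounded periodic first-order term, so $\cK_0$ falls under the hypotheses of that theorem, which yields a constant $c>0$ and a vector $\bar b\in\real^d$ with $p^0(t,x,z)\le c\,t^{-d/2}\exp\!\big(-\|z-x-\bar b\,t\|^2/(ct)\big)$ for all $x,z,t$. Here $\bar b$ is the effective drift of the diffusion generated by $\cK_0$; since $\cK_0^*$ has principal eigenvalue $0$ with normalized periodic eigenfunction $\psi_0^*=\ph_0\ph_0^*$ (as recorded after \eqref{kv}), the invariant law on the torus of this diffusion is $\psi_0^*(x)\,dx$, and therefore $\bar b=\int_{[0,1)^d}\big(b+a\nabla\log\ph_0\big)\psi_0^*\,dx$.

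It remains only to check $\bar b=\bar{\bv}$. Differentiating the eigenvalue identity \eqref{phieig} in $\zeta$ and pairing against the adjoint eigenfunction $\ph_\zeta^*$, the terms carrying $\mu$ and $\partial_\zeta\ph_\zeta$ cancel by virtue of the normalization \eqref{noreig}, and what is left is $\nabla\mu(\zeta)=\int_{[0,1)^d}\big(b+a\zeta+a\nabla\log\ph_\zeta\big)\psi_\zeta^*\,dx$ — the integrand being precisely the drift of $\cK_\zeta$ from \eqref{kv} multiplied by $\psi_\zeta^*$. Evaluating at $\zeta=0$ gives $\nabla\mu(0)=\bar b$. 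On the other hand $\bar{\bv}$ minimizes the strictly convex $\Phi$, so $0=\nabla\Phi(\bar{\bv})=\hat\zeta(\bar{\bv})$ and hence $\bar{\bv}=\nabla\mu(0)$; thus $\bar b=\bar{\bv}$. Substituting $z=y+\bar{\bv}t$ in the bound of the second paragraph turns $z-x-\bar b\,t$ into $y-x$, and combined with the reduction of the first paragraph this is exactly \eqref{aronsonforf}. The one delicate point is bookkeeping, not computation: one must verify that the effective drift appearing in the Gaussian exponent of Norris's estimate is indeed this invariant-measure average of the generator's drift, and that it is unaffected by the passage to divergence form; once that is granted the rest is a routine calculation.
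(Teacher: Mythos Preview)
Your proposal is correct and follows essentially the same route as the paper: reduce to $p^0$ via the $h$-transform at $\zeta=0$, apply Norris's Theorem~1.1 to the potential-free periodic diffusion $\cK_0$, and then identify the effective drift in Norris's Gaussian exponent with $\bar{\bv}=\nabla\mu(0)$. The paper simply asserts the last identification (pointing to \cite{Shab19} for the translation to Norris's setting), whereas you spell out the computation via differentiating \eqref{phieig}; but the structure of the argument is identical.
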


\begin{proof}[Proof of Lemma \ref{aronlem}] 
From \eqref{pkrep} with $\zeta = 0$, we have
\[
u(t,x,y) = e^{t \mu(0)}  \frac{\varphi_0(x)}{\varphi(y)}p^{0}(t,x,y).
\]
where $p^0(t,x,y)$ is the transition kernel for the operator $\mathcal{K}_0$ in \eqref{kv}, having periodic coefficients, but without a potential term. The effective drift for $p^0$ is precisely $\bar{\bv} = \ell(0) = \nabla \mu(0)$. By Theorem 1.1 from Norris \cite{Norris} there exists $C>0$ such that for all $x,y\in \real^d$ and $t > 0$,
		\[
		C^{-1}t^{-d/2}e^{\frac{-C\|y-x\|^2}{t}} \leq p^0(t,x,y + \bar {\bv} t) \leq  C t^{-d/2}e^{\frac{-\|y-x\|^2}{Ct}}.
		\]
(See \cite{Shab19} Lemma 5.3 for an outline of the comparison of the setting in \cite{Norris} to the setting here).
Recall that $\mu(0) = - \Phi(\bar{\bv})$. In terms of $u$, this implies that
		\[
		u(t,x,y + \bar{\bv} t) \leq  \tilde{C} t^{-d/2}\exp\Big( -t \Phi(\bar{\bv})- \frac{\|y -x\|^2}{\tilde C t}\Big).
		\]

\end{proof}

	\section{Asymptotic behavior of a super-critical branching process in periodic media}\label{inter}

In this section, we study the distribution
of the number of particles in regions whose spatial location depends on time.  Throughout this section, we will assume that the branching diffusion process is super-critical, meaning that
\begin{equation}
\Phi(\bar{\bv}) = -\mu(0) < 0, \label{eq:supercrit}
\end{equation}
where $\bar{\bv}$ is the effective drift defined at \eqref{effectivedrift}. In view of Theorem \ref{thmmain1}, this condition implies that the total mass $\int u(t,x,y)\,dy$ grows exponentially fast, as $t \to \infty$. 

Recall that $n^y(t,x)$ denotes the number of particles  located in a unit $d$-dimensional cube containing $y\in \real^d$, assuming that, initially, there is one particle located at $x\in \real^d$.
We state three theorems that describe different behaviors of the distribution of $n^y(t,x)/\EXP(n^y(t,x))$. The main theorem in this section (\thmref{thm:nof}) shows intermittency (i.e., the $k$-th moment dominates the $k$-th power of the first moment for some k), at locations with linear in time distances from the origin (recall that the bulk of the particles is located at the origin).

\subsection{Intermittency in the domain of large deviations} \label{inter1}

		For $y = (y_1, y_2,\cdots, y_d)$, let $\cube_y^d$ denote the $d$-dimensional cube:
		\[
		\cube_y^d = y + [0,1)^d =  [y_1,y_1+1)\times [y_2, y_2+1) \times \cdots \times [y_d, y_d +1).
		\]
	Recall that $n^{y}(t,x)$  denotes the number of particles
located in  $\cube_y^d$, assuming that, initially, there is one particle located at $x \in \real^d$.
		\begin{theorem}\label{thm:nof}
			For each $k \in \naturals\cup \{0\}$, and each $x \in [0,1)^d$, the following statements hold:
			\begin{itemize}
				\item[(a)] For each $\bv \in \real^d$, there exists the limit,
				\begin{equation}
				\gamma_k(\bv) =	\lim\limits_{t \to \infty} \frac{\ln\EXP(n^{t{\bv}}(t,x)^k)}{t} \in \mathbb{R}. \label{gammakdef}
				\end{equation}
For $k = 1$, $\gamma_1(\bv) = - \Phi(\bv)$. For $k \geq 2$, 
		\begin{equation}
		\gamma_k(\bv) = \sup\limits_{w \in \real^d,\\ u \in (0,1)} \Big[ u \gamma_{k-1}\Big(\frac{\bv - w}{u}\Big) +  u \gamma_{1}\Big(\frac{\bv - w}{u}
		\Big) + (1-u)\gamma_1\Big(\frac{ w}{1-u}\Big)\Big]. \label{gammakdef2}
		\end{equation}
				\item[(b)] Define $G_{k} = \{\bv \in \real^d : \gamma_1(\bv) \geq 0, \gamma_k(\bv) = k	\gamma_1(\bv)\}$ for each $k \in \naturals$. Then sets $\{G_k\}_{k \geq 1}$ are closed subsets of $\real^d$ and $G_{k+1} \subseteq G_k$ for all $k \in \naturals$. There exists a sequence of  constants $\alpha_k > 0$ such that $B_{\alpha_k}(\bar{\bv}) \subseteq G_k$, and $\cap_{k \in \naturals} G_k = \{ \bar{\bv}\}$.
			\end{itemize}
		\end{theorem}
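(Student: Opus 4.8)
The plan is to derive and then iterate a moment recursion. Fix a target cube $\cube_y^d$ and set $m_k(t,x)=\EXP_x[(n^{y}(t,x))^{k}]$. Since a particle at $z$ splits into two independent copies at rate $\alpha(z)$ and $(a+b)^{k}=\sum_i\binom ki a^ib^{k-i}$, the branching property gives $\partial_t m_k=\cL m_k+\alpha\sum_{i=1}^{k-1}\binom ki m_i m_{k-i}$ with $m_k(0,\cdot)=\mathbbm{1}_{\cube_y^d}$, hence by Duhamel's formula and the defining property of $u$,
\begin{equation*}
m_k(t,x)=\int_{\cube_y^d}u(t,x,z)\,dz+\int_0^t\!\!\int_{\real^d}u(t-s,x,z)\,\alpha(z)\sum_{i=1}^{k-1}\binom ki m_i(s,z)\,m_{k-i}(s,z)\,dz\,ds .
\end{equation*}
I would run this with $y=t\bv$. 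For $k=1$ the sum is empty and \thmref{thmmain1} gives $m_1(t,x)=e^{-t\Phi(\bv)+o(t)}$, so $\gamma_1(\bv)=-\Phi(\bv)$; combining \thmref{thmmain1} with the Aronson bound \lemref{aronlem} on the far range, I would upgrade this to the uniform statement $\tfrac1s\ln\EXP_z[(n^{t\bv}(s,z))^{i}]=\gamma_i(\tfrac{t\bv-z}{s})+o(t)$, uniformly over $z\in\real^d$ and $s\in(0,t]$, which is the form needed to feed the recursion. Finiteness of all the limits is automatic: Lyapunov's inequality gives $\gamma_k(\bv)\ge k\gamma_1(\bv)>-\infty$, while $n^{t\bv}(t,x)\le N(t,x)$ together with the standard supercritical estimate $\tfrac1t\ln\EXP_x[N(t,x)^{k}]\to k\mu(0)$ (via the $L^{k}$-convergent additive martingale built from the periodic eigenfunction $\ph_0$) gives $\gamma_k(\bv)\le k\mu(0)<\infty$.

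For part (a) with $k\ge2$ I would induct. Substituting the uniform asymptotics for $m_1,\dots,m_{k-1}$ and for $u$ into the recursion and evaluating the $ds\,dz$ integral by Laplace's method---the polynomial and volume prefactors vanishing into $o(t)$, and a matching lower bound coming from restricting the integral to an $O(\sqrt t)$-window around the optimizer with $z$ perturbed by $O(1)$ so that $\alpha(z)>0$---one gets, after the rescaling $s=ut$, $z=wt$ and using that $x$ is fixed, that the $i$-th summand contributes exponential rate $\sup_{u\in(0,1),\,w\in\real^d}[(1-u)\gamma_1(\tfrac{w}{1-u})+u\gamma_i(\tfrac{\bv-w}{u})+u\gamma_{k-i}(\tfrac{\bv-w}{u})]$. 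Finally, $i\mapsto\EXP[X^{i}]$ is log-convex (Hölder), so $i\mapsto\gamma_i(c)$ is convex on $\{0,1,2,\dots\}$, whence $i\mapsto\gamma_i(c)+\gamma_{k-i}(c)$ is maximized over $\{1,\dots,k-1\}$ at $i=1$; this identifies the rate with the right-hand side of \eqref{gammakdef2} and gives existence of the limit.

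For part (b), each $\gamma_k$ is continuous: inductively, in \eqref{gammakdef2} the bracket tends to $-\infty$ when the Jensen gap $D(w,u):=u\Phi(\tfrac{\bv-w}{u})+(1-u)\Phi(\tfrac{w}{1-u})-\Phi(\bv)\ge0$ is large (using $\gamma_{k-1}\le(k-1)\mu(0)$ and the strict, superlinear convexity of $\Phi$) and it extends continuously to $u\in\{0,1\}$, so the supremum is over a compact set varying continuously in $\bv$; hence $G_k=\{\gamma_1\ge0\}\cap\{\gamma_k=k\gamma_1\}$ is closed. The inclusion $G_{k+1}\subseteq G_k$ is immediate from Lyapunov's inequality, which makes $j\mapsto\gamma_j(\bv)/j$ nondecreasing: if $\gamma_1(\bv)\ge0$ and $\gamma_{k+1}(\bv)=(k+1)\gamma_1(\bv)$, then $k\gamma_1(\bv)\le\gamma_k(\bv)\le\tfrac{k}{k+1}\gamma_{k+1}(\bv)=k\gamma_1(\bv)$. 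For $\bigcap_k G_k\subseteq\{\bar{\bv}\}$: if $\bv\ne\bar{\bv}$ and $\gamma_1(\bv)\ge0$, i.e.\ $\Phi(\bv)\le0$, then since $\bar{\bv}$ is the \emph{unique} minimizer of $\Phi$ and $\Phi(\bar{\bv})=-\mu(0)<0$ one has $\Phi(\bv)>\Phi(\bar{\bv})$, so one fixes $u^{*}\in(0,1)$ with $\Phi(\bv)-u^{*}\Phi(\bar{\bv})=:\eps^{*}>0$; applying \eqref{gammakdef2} with $\tfrac{\bv-w}{u}=\bar{\bv}$, $u=u^{*}$ and $\gamma_{k-1}(\bar{\bv})\ge(k-1)\mu(0)$ yields $\gamma_k(\bv)-k\gamma_1(\bv)\ge(1-u^{*})\gamma_1(\tfrac{\bv-u^{*}\bar{\bv}}{1-u^{*}})+k\eps^{*}$, which is $>0$ for $k$ large, so $\bv\notin G_k$.

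Producing $\alpha_k>0$ with $B_{\alpha_k}(\bar{\bv})\subseteq G_k$ (which also gives $\bar{\bv}\in\bigcap_k G_k$, completing $\bigcap_k G_k=\{\bar{\bv}\}$) is the step I expect to be the main obstacle. I would induct on $k$; $k=1$ is clear since $G_1=\{\Phi\le0\}$ is a neighbourhood of $\bar{\bv}$. For the step, \eqref{gammakdef2} and Jensen recast $\gamma_k(\bv)\le k\gamma_1(\bv)$ (the reverse being Lyapunov) as
\begin{equation*}
\sup_{w,u}\Big[\,\underbrace{u\big(\gamma_{k-1}(c)-(k-1)\gamma_1(c)\big)}_{A\ge0}\;-\;\underbrace{\big(uk\Phi(c)+(1-u)\Phi(c')-k\Phi(\bv)\big)}_{B}\,\Big]\le 0,\qquad c=\tfrac{\bv-w}{u},\quad c'=\tfrac{w}{1-u}.
\end{equation*}
By the induction hypothesis $A=0$ whenever $c\in B_{\alpha_{k-1}}(\bar{\bv})$, and there $-B\le0$ follows from strict convexity (the elementary inequality $uk\Phi(c)+(1-u)\Phi(c')\ge k\Phi(uc+(1-u)c')$, valid because $\Phi<0$ near $\bar{\bv}$). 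When $c\notin B_{\alpha_{k-1}}(\bar{\bv})$, strict convexity and superlinearity give $\Phi(c)-\Phi(\bar{\bv})\ge\delta=\delta(\alpha_{k-1})>0$; one then bounds $A\le u(k-2)(\Phi(c)-\Phi(\bar{\bv}))$ via the a priori estimate $\gamma_{k-1}(c)\le(k-2)\mu(0)-\Phi(c)$ (which also forces $\gamma_{k-1}(c)\to-\infty$ as $|c|\to\infty$) and $B\ge uk(\Phi(c)-\Phi(\bar{\bv}))-k(\Phi(\bv)-\Phi(\bar{\bv}))$, so $A-B\le-2u\delta+k(\Phi(\bv)-\Phi(\bar{\bv}))$, which is $<0$ once $\Phi(\bv)-\Phi(\bar{\bv})$ is small relative to $\delta/k$; the residual regimes (very small $u$ with $c$ far, and $u\to1$, where $c\to\bv\in B_{\alpha_{k-1}}(\bar{\bv})$) are absorbed using superlinearity of $\Phi$ and the induction hypothesis. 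Choosing $\alpha_k\le\alpha_{k-1}$ small enough to enforce all these smallness conditions on $B_{\alpha_k}(\bar{\bv})$ closes the induction. The real difficulty here is the uniform-in-$k$ bookkeeping: pinning down which part of the $(w,u)$-supremum matters for $\bv$ near $\bar{\bv}$ and showing the Jensen penalty $B$ always beats the (uniformly bounded) intermittency gain $A$ off the diagonal.
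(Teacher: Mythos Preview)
Your approach to part (a) is essentially the paper's: a Duhamel recursion for moment-like quantities, Laplace's method on the $(s,z)$-integral, and log-convexity of $i\mapsto\gamma_i$ to single out $i=1$ as the dominant index. The paper works with factorial-moment integrals $m_k^y=\int\cdots\int\rho_k$ (zero initial data for $k\ge2$) and recovers the true moments via Stirling numbers, whereas you work with the moments themselves; this is a cosmetic difference. Two caveats: your claimed uniformity ``$\tfrac1s\ln m_i(s,z)=\gamma_i(\tfrac{t\bv-z}{s})+o(t)$ over all $s\in(0,t]$ and $z\in\real^d$'' is too strong as stated---for $s=O(1)$ or $\|z\|/t$ large no such asymptotic is available. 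The paper handles this by an explicit six-region partition of $[0,t]\times\real^d$, using the Gaussian upper bound \eqref{aronsonk} in the boundary regions and the sharp asymptotics only on $[\eps t,\kappa t]\times\{\|z-t\bv\|\le Mt\}$; you would need to do the same. Also, your Duhamel formula carries the initial-value term $\int_{\cube^d_{t\bv}}u(t,x,z)\,dz$ of rate $\gamma_1(\bv)$, and you must check it is dominated by \eqref{gammakdef2}; this follows by evaluating the bracket at $u\to0$, $w=\bv$, but should be said.

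The genuine gap is in the step $B_{\alpha_k}(\bar\bv)\subseteq G_k$. In your Case~1 ($c=\tfrac{\bv-w}{u}\in B_{\alpha_{k-1}}(\bar\bv)$, so $A=0$), you assert the ``elementary inequality'' $uk\Phi(c)+(1-u)\Phi(c')\ge k\Phi(uc+(1-u)c')$ ``valid because $\Phi<0$ near $\bar\bv$.'' This is not a consequence of convexity plus negativity of $\Phi$ at $c$ and $\bv$: convexity yields only $uk\Phi(c)+(1-u)\Phi(c')\ge k\Phi(\bv)-(k-1)(1-u)\Phi(c')$, which gives $B\ge0$ \emph{only when} $\Phi(c')\le0$. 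But $c'=\tfrac{\bv-uc}{1-u}$ is unconstrained; for $u$ close to $1$ it can be arbitrarily far from $\bar\bv$ with $\Phi(c')>0$, even though $c$ and $\bv$ are both in small balls around $\bar\bv$ (indeed $|c'-\bar\bv|\le\tfrac{|\bv-\bar\bv|+u|c-\bar\bv|}{1-u}$ blows up). So Case~1 splits further, and the regime where $\Phi(c')>0$ cannot be dispatched by your one line. The paper handles exactly this by introducing $g^{\bv}_k(w,u)=ku\gamma_1(\tfrac{\bv-w}{u})+(1-u)\gamma_1(\tfrac{w}{1-u})$ and proving, via three separate lemmas partitioning $(w,u)$-space (large $\|w\|$ with $u$ near $1$; bounded $\|w\|$ with $u$ near $1$, via a derivative-in-$\eps$ argument along rays $w=\ell\eps$, $u=1-\eps$; and $u$ bounded away from $1$), that $\sup g^{\bv}_k=k\gamma_1(\bv)$ for $\bv$ in a small ball. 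Only after this is established does the paper run the induction comparing $\gamma_k$ to $g^{\bv}_k$ on and off the cone $\{|c-\bar\bv|\le\alpha_{k-1}\}$. Your Case~2 analysis is closer to correct---the bound $\gamma_{k-1}(c)\le(k-2)\mu(0)-\Phi(c)$ is valid and useful---but the conclusion $A-B\le-2u\delta+k(\Phi(\bv)-\Phi(\bar\bv))$ still needs the small-$u$ regime treated separately, which you defer to ``superlinearity''; this is again where the paper's lemmas do real work.
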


Jensen's inequality implies that $\EXP(n^{t{\bv}}(t,x)^k) \geq (\EXP(n^{t{\bv}}(t,x)))^k$ for each $k \in \naturals$ and $\bv \in \real^d$. Therefore, as long as the limits \eqref{gammakdef} exists, we have, $\gamma_k(\bv) \geq k	\gamma_1(\bv)$ for each $k \in \naturals$. Thus, $G_1 \setminus G_k = \{\bv \in G_1 : \gamma_k(\bv) > k	\gamma_1(\bv)\}$. Notice that Part (b) of Theorem \ref{thm:nof} implies that $G_1 \setminus G_{k^*}$ is non-empty for some $k^* \geq 2$. Thus, for $\bv \in G_1 \setminus G_{k^*}$, 
\[
\lim\limits_{t \to \infty}\frac{\ln(\EXP(n^{t{\bv}}(t,x)^k)))}{t} = \gamma_k(\bv) > k\gamma_1(\bv) =  \lim\limits_{t \to \infty}\frac{ \ln(\EXP(n^{t{\bv}}(t,x)))^k)}{t} .
\]
This is the phenomenon of intermittency. This behavior is markedly different from the behavior in the case when the branching rate $c(x)$ is compactly supported in space. In fact, for super-critical branching processes with compactly supported branching rates, in \cite{KorMol}, it is shown that, that $n^{t \bv}(t,x)$ converges after appropriate scaling as $t\to \infty$, and the quantities $\EXP(n^{t \bv}(t,x)^k)/\EXP(n^{t \bv}(t,x))^k$ converge to the corresponding quantities for the limiting random variable. 
\begin{remark}
Formula \eqref{gammakdef2} essentially provides a criterion for establishing weather intermittency occurs or not, in terms of a variational problem. To see this, we demonstrate the case $k = 2$. If $(w, u) = (\bv(1-u), u)$, and $u\uparrow 1$,  then, the term inside the supremum achieves the value $2\gamma_1(\bv)$. This value of $(w,u)$ lies on the boundary of the domain $\real^d\times (0,1)$. Thus, intermittency would occur if there exists a different pair $(w,u) \in \real^d\times (0,1)$ such that the value of the supremum is greater that $2\gamma_1(\bv)$. Otherwise, intermittency can not occur. 
\end{remark}

				\subsection{Distribution of total number of particles}\label{totnumber}
 Following notation introduced in Section \ref{asympde}, recall that $\ph_0$ is the principal periodic eigenfunction of the operator $\cL$. It satisfies
		\begin{align}
			\cL(\ph_0) = \mu(0) \ph_0, 
		\end{align}
		with eigenvalue $\mu(0) \in \real$. The function $\ph_0^*$ will denote the solution of the adjoint eigenvalue problem:
		\[
		\cL^*(\ph_0^*) = \mu^*(0) \ph_0^*,
		\]
		where $\mu^*(0)$ is the principal eigenvalue of the adjoint operator, and hence $\mu^*(0) = \mu(0)$. We normalize  $\ph_0$ and $\ph_0^*$ by 
		\begin{equation}
			\int_{[0,1)^d} \ph_0(y) \ph_0^*(y)\,dy = 1 = \int_{[0,1)^d} \ph_0^*(y) \,dy.
		\end{equation}
		In this section, to simplify notation, we will denote $\ph_0,\ph^*_0$ and $\mu(0)$ by $\ph, \ph^*$ and $\mu$.
		For $t>0$, $x,y \in [0,1)^d$, let $\rh(t,x,y)$ denote the fundamental solution of the following PDE on the torus:
		\[
		\partial_t\rh(t,x,y) = \cL_x\rh(t,x,y), ~~~~~~~~~~~ \rh(0,x,y) = \delta_y(x).
		\]
		Observe that $C_0, \eps>0$ such that, for every $t>0$,  
		\begin{equation}\label{rhobound}
			\int_{[0,1)^d} \rh(t,x,z) dz \leq C_0e^{t \mu }
		\end{equation}
		Let $N(t,x)$ denote the total number of particles in $\real^d$ at time $t$, assuming that, at time $t = 0$, there is one particle at $x \in [0,1)^d$. In the following theorem, all the moments of the normalized total number of particles are shown to converge.

		\begin{theorem}\label{tot}
			For each $k\in \naturals$, the following limit exists uniformly in $x \in [0,1)^d$:
			\begin{equation}
				\lim\limits_{t \to \infty}\frac{\EXP(N(t,x)^k)}{e^{k\mu t}} = f_k(x),
			\end{equation}
			where the functions $f_k$ are defined recursively as follows,
			\[
			f_1(x) = \ph(x),
			\]
			and, for $k \geq 2$,
			\begin{equation}\label{fk}
				f_k(x) = \sum_{i =1}^{k-1}\beta_i^k \int_0^\infty\int_{[0,1)^d}e^{-k\mu t}\alpha(z)f_i(z)f_{k-i}(z)\rh(t,x,z)dzdt,
			\end{equation}
			where $\beta_i^k = k!/(i!(k-i)!)$. In addition, there exists a real valued random variable $\xi_x$, whose distribution is determined uniquely, such that $\EXP(\xi_x^k) = f_k(x)$ for each $k \in \naturals$.
		\end{theorem}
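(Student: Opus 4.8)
The plan is to prove convergence of the moments by an inductive argument on $k$, using the branching structure of the process together with the spectral asymptotics of $\rh(t,x,\cdot)$ encoded in \eqref{rhobound} and the fact that $e^{-\mu t}\rh(t,x,y)\to \ph(x)\ph^*(y)$ uniformly on the torus. For $k=1$ the first moment is $\EXP(N(t,x)) = \int_{\real^d} u(t,x,y)\,dy$, which, after folding back to the torus, equals $\int_{[0,1)^d}\rh(t,x,z)\,dz\cdot(\text{periodization factor})$; the spectral expansion of $\rh$ gives $\EXP(N(t,x))\sim e^{\mu t}\ph(x)$, so $f_1(x)=\ph(x)$, confirming uniform convergence and the positivity/periodicity of $f_1$. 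For the inductive step, I would condition on the first branching or annihilation event of the initial particle. Writing the usual renewal/first-event decomposition, $N(t,x)$ is, up to the first jump at time $s$ with the particle sitting at $z$ (distributed according to the sub-Markov semigroup generated by $\cL-r$, equivalently tracked by $\rh$ on the torus since all coefficients are periodic), either replaced by two independent copies $N'(t-s,z)+N''(t-s,z)$ (with rate $\alpha(z)$) or killed (with rate $\beta(z)$), while with the complementary probability no event has occurred by time $t$. Expanding $\EXP(N(t,x)^k)$ by the multinomial theorem and using independence of the two offspring families, the top-order ``diagonal'' terms $i=0$ and $i=k$ reproduce $\EXP(N(t-s,z)^k)$ and combine with the no-event term to give, after multiplying by $e^{-k\mu t}$, a fixed-point/Gronwall-type identity whose inhomogeneous part is exactly the cross terms $\sum_{i=1}^{k-1}\beta_i^k\,\alpha(z)\EXP(N(t-s,z)^i)\EXP(N(t-s,z)^{k-i})$; passing to the limit in these cross terms using the inductive hypothesis $e^{-i\mu s}\EXP(N(s,z)^i)\to f_i(z)$ (uniformly in $z$) and the identity $k\mu = i\mu+(k-i)\mu$ yields precisely the integral equation \eqref{fk}. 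One must check that the remaining (``diagonal plus no-event'') contribution, after normalization, converges to zero contribution beyond reproducing $f_k$ itself, i.e. that the linear operator $g\mapsto \int_0^\infty\int_{[0,1)^d} e^{-k\mu t}(2\alpha(z)-r(z)-\mu\cdot 0)\,\rh(t,x,z)g(z)\,dz\,dt$ is well-defined and that the fixed-point equation has a unique solution in the relevant function space (continuous periodic functions), which follows from supercriticality $\mu>0$ (so that $e^{-k\mu t}\rh(t,x,z)$ is integrable in $t$, using \eqref{rhobound} with $k\geq 1$ and $\mu>0$ — wait, here $\mu=\mu(0)$ may be of either sign, but \eqref{eq:supercrit} gives $\mu(0)>0$) and from a contraction estimate in $t$.

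The convergence should be made uniform in $x\in[0,1)^d$ by keeping all the estimates uniform: the convergence $e^{-\mu t}\rh(t,x,z)\to\ph(x)\ph^*(z)$ is uniform by standard parabolic regularity on the compact torus, and the inductive hypotheses are assumed uniform, so the integrals defining \eqref{fk} converge uniformly provided we have an integrable-in-$t$ dominating function — again supplied by \eqref{rhobound} and the exponential decay coming from $\mu>0$. I would split the time integral $\int_0^\infty = \int_0^T + \int_T^\infty$: on $[0,T]$ dominated convergence applies directly, and the tail $\int_T^\infty$ is controlled uniformly by \eqref{rhobound} together with the (now established, by induction, hence bounded) ratios $e^{-i\mu t}\EXP(N(t,z)^i)\le C_i$, which makes the tail contribution $\le C\,e^{-\delta T}$ for some $\delta>0$.

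Finally, for the existence of the random variable $\xi_x$ with $\EXP(\xi_x^k)=f_k(x)$: I would verify that the sequence $(f_k(x))_{k\ge 1}$ is a moment sequence satisfying a growth bound ensuring determinacy, e.g. Carleman's condition $\sum_k f_{2k}(x)^{-1/2k}=\infty$, or more simply that $f_k(x)\le C^k k!$ or even $f_k(x)\le C_x^k$; such a bound follows by induction from \eqref{fk} since each cross term is a convolution of previously-bounded quantities against an integrable kernel with total mass $O(1)$, giving a recursion $f_k\le A\sum_{i=1}^{k-1}\beta_i^k f_i f_{k-i}$ whose solution grows at most like $k!\,B^k$ (the generating-function/Catalan-type bound), which is more than enough for Stieltjes/Hamburger determinacy. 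Concretely, $N(t,x)/e^{\mu t}$ is an $L^k$-bounded (for every $k$) family, and one expects it to converge a.s.\ and in every $L^k$ to a limit $\xi_x$ — indeed $e^{-\mu t}N(t,x)$ is (a deterministic multiple of) a nonnegative martingale-like object whose moments converge to $f_k(x)$, so the limit's moments are the $f_k(x)$ and, being determined by a rapidly-growing-but-determinate sequence, fix the law of $\xi_x$ uniquely. \textbf{Main obstacle.} The technical heart is the first-event decomposition and the bookkeeping that isolates \eqref{fk}: one must carefully justify that the ``diagonal'' part of the multinomial expansion, combined with the no-jump term, assembles into an operator for which $f_k$ is the \emph{unique} fixed point, rather than merely \emph{a} solution — this requires both the integrability in $t$ (hence supercriticality is essential) and a uniqueness argument (contraction on a weighted sup-norm, or appeal to the probabilistic interpretation), and it is here that the periodicity, the bound \eqref{rhobound}, and the sign condition \eqref{eq:supercrit} all genuinely enter.
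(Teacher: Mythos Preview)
Your overall architecture (induction on $k$, uniform convergence via dominated convergence plus a tail estimate, and the $f_k(x)\le A^k k!$ bound followed by Carleman) matches the paper's. But the core mechanism you propose --- a first-branching-event decomposition for the \emph{raw} moments $\EXP(N(t,x)^k)$ --- is different from, and substantially messier than, what the paper does, and this is exactly where your self-identified ``main obstacle'' comes from.

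The paper avoids your obstacle entirely by working not with raw moments but with the \emph{factorial} moments
\[
\bar m_k(t,x)=\int_{\real^d}\!\cdots\!\int_{\real^d}\rho_k(t,x,y_1,\dots,y_k)\,dy_1\cdots dy_k,
\]
which satisfy (on the torus) the linear PDE
\[
\partial_t \bar m_k=\cL_x \bar m_k+\alpha(x)\sum_{i=1}^{k-1}\beta_i^k\,\bar m_i\,\bar m_{k-i},\qquad \bar m_k(0,\cdot)\equiv 0\ (k\ge 2).
\]
Crucially, $\bar m_k$ does \emph{not} appear on the right-hand side, so Duhamel's formula gives an explicit expression
\[
\bar m_k(t,x)=\int_0^t\!\!\int_{[0,1)^d}\rh(t-s,x,z)\,\alpha(z)\sum_{i=1}^{k-1}\beta_i^k\,\bar m_i(s,z)\,\bar m_{k-i}(s,z)\,dz\,ds,
\]
with \emph{only} cross terms. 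One substitutes the inductive hypothesis $\bar m_i(s,z)=e^{i\mu s}(f_i(z)+q_i(s,z))$, changes variables $u=t-s$, and reads off $f_k$ as the $\int_0^\infty$ integral, with the remainder $q_k(t,x)$ controlled exactly by the split $\int_0^{t/2}+\int_{t/2}^t$ and the bound \eqref{rhobound}. There is no fixed-point equation to solve and no uniqueness argument needed; the relation $\EXP(N^k)=\sum_i S(k,i)\bar m_i$ then transfers the result to raw moments since the $i<k$ terms are lower order.

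Two specific issues with your route as written: (i) in a first-event decomposition the relevant kernel is \emph{not} $\rh$ (which already carries the potential $r=\alpha-\beta$), but the sub-Markov kernel of the diffusion killed at rate $\alpha+\beta$; untangling how that kernel recombines with the diagonal ($i=0,k$) and no-event terms to reproduce $\rh$ is precisely the bookkeeping you would have to do, and you have not done it; (ii) your proposed contraction/uniqueness argument for the fixed point is only sketched, whereas the factorial-moment route makes it unnecessary. Your treatment of the moment problem (growth bound by induction, then Carleman) is essentially the same as the paper's.
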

		
			The functions $f_k(x)$ defined recursively by the formulas \eqref{fk} will be shown to be well defined, that is, the integrals in \eqref{fk} will be shown to be convergent.

			The above theorem implies that the total number of particles $N(t,x)$, normalized by its expected value behaves ``regularly". That is, the $k$-th moment of $N(t,x)$ is commensurate with the $k$-th power of the first moment. In the next section, we show that $n^{y}(t,x)$ also exhibits the same ``regular" behavior when $\|y - t\bar{\bv}\| = o(t)$.  In contrast, in Section \ref{inter} we have shown that $n^{t\bv}(t,x)$ exhibits intermittent behavior when $\bv\neq \bar{\bv}$, i.e., the $k$-th moment  of $n^{t\bv}(t,x)$ grows much faster than the $k$-th power of the first moment for some $k \in \naturals$. 

		\subsection{Distribution of the number of particles near the region where the bulk of the particles is located}\label{near0}
		We show that, at distances that grow sub-linearly in time from the bulk of the particles, all the moments converge. Let $n^{y}(t,x)$ denote the number of particles in $\cube_y^d$ at time $t \in \real^+$, given that there was one particle at $x \in [0,1)^d$ at time $t =0$. Define 
		\[
		g(t,y) = (\sqrt{2\pi t})^{-d}\det[ D^2 \Phi(\bar{\bv}) ]^{1/2}e^{-t\Phi(\frac{y}{t}+ \bar{\bv})}.
		\]
		From this formula for $g(t,y)$, since the minimum of the twice continuously differentiable function $\Phi(\bv)$ is achieved at $\bv = \bar{\bv}$, for each $\alpha\in (0,1)$, we get 
		\begin{equation}\label{inc}
			g(t,\alpha y) \geq g(t,y).
		\end{equation}

		\begin{theorem}\label{allmoments}Let $r(t) = o(t)$ as $t \to \infty$. For each $k \in \naturals$, 
			\[
			\lim\limits_{t \to \infty}\frac{\EXP(n^{y(t)+ \bar{\bv}t}(t,x)^k)}{g(t,y(t))^k} = f_k(x)
			\]
			uniformly in $x \in [0,1)^d$ and $\|y(t)\| \leq r(t)$.
		\end{theorem}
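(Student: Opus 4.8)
\emph{Proof strategy.}
The plan is to induct on $k$ via the moment recursion for the branching process, using \thmref{thmmain1} to pin down the first moment and \lemref{aronlem} (together with convexity of $\Phi$) for the a priori bounds. Write $A_t=\cube_{y(t)+\bar{\bv}t}^d$, and let $v_k(\tau,z)$ denote the $k$-th moment of the number of particles in $A_t$ at time $\tau$ given one particle at $z$ at time $0$; thus the quantity in the theorem is $v_k(t,x)$, and $\mu=\mu(0)=-\Phi(\bar{\bv})>0$, $\ph=\ph_0$, $\ph^*=\ph_0^*$ as in Section~\ref{totnumber}. Conditioning on the first branching/death event and applying Duhamel's formula to the linear equation $\partial_\tau v_k=\cL v_k+\alpha\sum_{i=1}^{k-1}\binom{k}{i}v_iv_{k-i}$, $v_k(0,\cdot)=\mathbf 1_{A_t}$, gives
\[
v_k(\tau,z)=\int_{A_t}u(\tau,z,w)\,dw+\sum_{i=1}^{k-1}\binom{k}{i}\int_0^\tau\!\!\int_{\real^d}u(\sigma,z,z')\,\alpha(z')\,v_i(\tau-\sigma,z')\,v_{k-i}(\tau-\sigma,z')\,dz'\,d\sigma .
\]
Since $g(t,y(t))\to\infty$ exponentially by \eqref{eq:supercrit}, the first term divided by $g(t,y(t))^k$ will vanish for $k\ge2$; for $k=1$ it equals $v_1(\tau,z)$.

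The base case, in the form needed to feed the induction, is a \emph{shifted first-moment estimate}: for all $A,B>0$,
\[
\lim_{t\to\infty}\ \sup_{\substack{0\le\sigma\le A,\ \|z\|\le B\\ \|y\|\le r(t)}}\ \Big|\frac{v_1(t-\sigma,z)}{g(t,y)}-e^{-\mu\sigma}\ph(z)\Big|=0 .
\]
Here $v_1(t-\sigma,z)=\int_{A_t}u(t-\sigma,z,w)\,dw$, and I would apply \thmref{thmmain1} with $\hat\zeta=\nabla\Phi(\tfrac{w-z}{t-\sigma})$: since $\tfrac{w-z}{t-\sigma}\to\bar{\bv}$ uniformly for $w\in A_t$, $z,\sigma$ bounded (as $\|y\|=o(t)$) and $\nabla\Phi(\bar{\bv})=0$, one gets $\hat\zeta\to0$, hence $\ph_{\hat\zeta}(z)\to\ph(z)$ and $\ph^*_{\hat\zeta}\to\ph^*$ uniformly (continuity of the principal eigenfunction in the parameter, as in Chapter~8 of \cite{Pinsky}), $\int_{A_t}\ph^*_{\hat\zeta}(w)\,dw\to1$ by $\bbZ^d$-periodicity of the eigenfunctions and \eqref{noreig}, and $\det[D^2\Phi(\tfrac{w-z}{t-\sigma})]^{1/2}\to\det[D^2\Phi(\bar{\bv})]^{1/2}$. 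The remaining exponential factor is handled by Taylor-expanding $\Phi$ at $\bar{\bv}$: because $\nabla\Phi(\bar{\bv})=0$, the two arguments $\tfrac{w-z}{t-\sigma}$ and $\tfrac{y}{t}+\bar{\bv}$ lie within $O(\|y\|/t)$ of $\bar{\bv}$ and differ by $O(1/t)$, so $(t-\sigma)\Phi(\tfrac{w-z}{t-\sigma})-t\Phi(\tfrac{y}{t}+\bar{\bv})\to\sigma\mu$, the potentially divergent $\|y\|^2/t$ terms cancelling. Altogether $v_1(t-\sigma,z)=\ph(z)\,g(t-\sigma,\bar{\bv}\sigma+y-z)\,(1+o(1))$ and $g(t-\sigma,\bar{\bv}\sigma+y-z)/g(t,y)\to e^{-\mu\sigma}$ for bounded $z,\sigma$; taking $\sigma=0$, $z=x$ is the case $k=1$ with $f_1=\ph$.

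For the inductive step, assume the shifted statement $v_j(t-\sigma,z)/g(t,y)^j\to e^{-j\mu\sigma}f_j(z)$, locally uniformly in $(\sigma,z)$ and uniformly in $\|y\|\le r(t)$, for all $j<k$. Plug this into the recursion for $v_k(t-\sigma_0,z_0)$ (bounded $\sigma_0,z_0$), divided by $g(t,y)^k$: the first term goes to $0$ as above, and in the sum one splits the $(\sigma,z')$-integration into a core $\{\sigma\le A',\ \|z'\|\le B'\}$ and its complement. On the core, the factors $v_i(t-\sigma_0-\sigma,z')/g^i$ and $v_{k-i}(\cdot)/g^{k-i}$ converge uniformly (inductive hypothesis at offset $\sigma_0+\sigma$), while $\int_0^{A'}\!\int_{\|z'\|\le B'}u(\sigma,z_0,z')\,dz'\,d\sigma\le\int_0^{A'}\EXP(N(\sigma,z_0))\,d\sigma$ is finite uniformly in $z_0$, so the core integral converges to the corresponding truncated integral of $u(\sigma,z_0,z')\,\alpha(z')\,e^{-k\mu(\sigma_0+\sigma)}f_i(z')f_{k-i}(z')$. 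Letting $A',B'\to\infty$ and using the bounds below to kill the complement, this tends to $e^{-k\mu\sigma_0}\sum_i\binom{k}{i}\int_0^\infty\!\int_{\real^d}u(\sigma,z_0,z')\,\alpha(z')\,e^{-k\mu\sigma}f_i(z')f_{k-i}(z')\,dz'\,d\sigma$; since $\alpha,f_i,f_{k-i}$ are $\bbZ^d$-periodic, $\int_{\real^d}u(\sigma,z_0,z')F(z')\,dz'=\int_{[0,1)^d}\rh(\sigma,z_0,z')F(z')\,dz'$ for periodic $F$, and the expression collapses to $e^{-k\mu\sigma_0}f_k(z_0)$ by the defining recursion \eqref{fk}. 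This proves the shifted statement for $k$, and $\sigma_0=0$, $z_0=x$ gives the theorem; the random variable $\xi_x$ with $\EXP(\xi_x^k)=f_k(x)$ is already furnished by \thmref{tot}.

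The crux — and the step I expect to be the main obstacle — is the a priori bound controlling the complement of the core: one needs $v_j(t-\sigma,z)/g(t,y)^j\le C_j\Theta_j(\sigma,z)$ with $\Theta_j$ integrable against $u(\sigma,z_0,z)\,\alpha(z)\,dz\,d\sigma$, uniformly in $t$ large and $\|y\|\le r(t)$; by the recursion this reduces to estimating $v_1(t-\sigma,z)/g(t,y)$. Combining \thmref{thmmain1} for moderate $z$ with \lemref{aronlem} for large $z$ gives $v_1(t-\sigma,z)\le C\,g(t-\sigma,\bar{\bv}\sigma+y-z)$ for $\sigma\le t/2$. The delicate point is comparing $g$ at different times and centers: a naive Gaussian estimate produces a spurious $e^{O(\|y\|^2/t)}$ factor (a mismatch between the Gaussian constant in \lemref{aronlem} and the Hessian $D^2\Phi(\bar{\bv})$ appearing in $g$), which is unbounded when $\|y\|$ is merely $o(t)$. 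This is defeated using convexity of $\Phi$: the function $s\mapsto s\,\Phi(\bar{\bv}+\tfrac{y}{s})$ has derivative $\le\Phi(\bar{\bv})=-\mu$ (from $\Phi(\bar{\bv})\ge\Phi(c)+\nabla\Phi(c)\cdot(\bar{\bv}-c)$ for all $c$), so $t\,\Phi(\bar{\bv}+\tfrac{y}{t})-(t-\sigma)\,\Phi(\bar{\bv}+\tfrac{y}{t-\sigma})\le-\sigma\mu$ with \emph{no} $\|y\|$-dependent error; combined with one more first-order convexity estimate in $z$ this yields $g(t-\sigma,\bar{\bv}\sigma+y-z)/g(t,y)\le C\big(\tfrac{t}{t-\sigma}\big)^{d/2}e^{-\mu\sigma}e^{\delta(t)\|z-\bar{\bv}\sigma\|}$ for $\sigma\le t/2$, with $\delta(t)=\sup_{\|h\|\le2r(t)/t}\|\nabla\Phi(\bar{\bv}+h)\|\to0$. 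Raising to the $k$-th power and integrating in $z$ against $u(\sigma,z_0,z)$ (for which \lemref{aronlem} permits an explicit Gaussian integration, the $e^{\delta(t)\|z-\bar{\bv}\sigma\|}$ factor being absorbed into a harmless $e^{o(\sigma)}$ correction) gives a convergent $\sigma$-integral thanks to the surviving factor $e^{-(k-1)\mu\sigma}$. The remaining range $t/2<\sigma<t$ is handled separately using the crude bounds $v_i(\tau,z)\le\tilde C_ie^{i\mu\tau}$ (from \thmref{tot}) together with Chapman--Kolmogorov identities such as $\int u(\sigma,z_0,z)\,v_1(t-\sigma_0-\sigma,z)\,dz=v_1(t-\sigma_0,z_0)$, which show that this range contributes $o(1)$ after dividing by $g(t,y)^k$. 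As all the estimates above are uniform in the stated ranges of $x$ and $y$, the convergence is uniform.
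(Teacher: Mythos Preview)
Your approach is sound and gives a genuine alternative to the paper's route. Both proofs hinge on the same Duhamel recursion and on controlling the ratio $g(t-\sigma,\cdot)/g(t,y)$; the two convexity estimates you isolate --- that $s\mapsto s\Phi(\bar{\bv}+y/s)$ has derivative at most $\Phi(\bar{\bv})=-\mu$, and that $\Phi(a)-\Phi(b)\le\nabla\Phi(a)\cdot(a-b)$ applied at $a=\bar{\bv}+y/(t-\sigma)$ (so that $\|\nabla\Phi(a)\|\to 0$) --- are exactly what yield the envelope $e^{-\mu\sigma+\delta(t)\|z-\bar{\bv}\sigma\|}$ with $\delta(t)\to0$, and the paper uses the same computations in a different wrapping (compare the monotonicity argument around \eqref{now} and the Taylor estimate \eqref{ff}). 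Where you differ is in the inductive \emph{packaging}: the paper carries, alongside the convergence statement, an auxiliary vanishing statement ``(ii)'' at an intermediate mesoscopic scale $\bar r(t)$ with $r(t)\ll\bar r(t)\ll t$, and splits the Duhamel integral first at time $\eps t$ and then in space at $\|z-y(t)\|=\bar r(t)$ before invoking the induction. Your single ``shifted'' hypothesis together with dominated convergence on a \emph{fixed} core $\{\sigma\le A',\ \|z'\|\le B'\}$ bypasses the mesoscopic scale entirely, at the price of having to propagate the a~priori envelope $W_j\le C_je^{-j\mu\sigma+j\delta_0\|z-\bar{\bv}\sigma\|}$ through the recursion --- which, as you indicate, does follow from the $j=1$ case since the Gaussian integral $\int u(\sigma,z_0,z)e^{k\delta_0\|z-z_0-\bar{\bv}\sigma\|}\,dz\le Ce^{\mu\sigma+c'k^2\delta_0^2\sigma}$ is integrable against $e^{-k\mu\sigma}$ for $\delta_0$ small. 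One small caveat: the claim $v_1\le C\,g$ is literally available only where \thmref{thmmain1} applies, i.e.\ for $\|(w-z)/(t-\sigma)\|$ bounded; outside that range you must fall back on \lemref{aronlem} directly, but there the Aronson tail of $u(\sigma,z_0,z)$ together with the crude bound $v_iv_{k-i}\le Ce^{k\mu(t-\sigma)}$ already makes the contribution $e^{-ct}$-small, so the argument closes. Overall your organization is tighter; the paper's two-statement induction is more explicit about the domain decomposition but carries the extra scale $\bar r(t)$ through the whole proof.
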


\subsection{Discussion} \label{sec:discussion}


There have been several other works on different aspects of branching diffusions in periodic media, and the topic is closely related to reaction-diffusion equations with periodic coefficients.  In particular, many authors have studied the spreading of wave fronts for reaction diffusion equations with periodic coefficents, having of the general form
\begin{equation} \label{RD}
		\partial_t w = \frac{1}{2} \sum\limits_{ij = 1}^{d} a_{ij}(y) \frac{\partial^2 w}{\partial y_i \partial y_j}
		+ \sum\limits_{i = 1}^{d} b_i(y) \frac{\partial w}{\partial y_i}  + f(y, w)\quad y \in \real^d, \;\; t > 0,
\end{equation}
where $f(y,w)$ is of KPP type, for example $f(y,w) = g(y) w(1 - w)$ with $g(y)$ being periodic, or $f(y,w) = w(g(y) - w)$. See \cite{GF79,freidlin1985,Xin00, BH02, BHR1, BHR2} and references therein. In one space dimension, the distribution of the maximal particle in the branching process, (the particle with largest spatial coordinate) can be expressed in terms of the solution to a reaction-diffusion equation of this KPP-type (see for example \cite{McK75}), so that the asymptotic behavior of wave fronts as $t \to \infty$ gives information about the behavior of the extremal particle in the branching process. A similar interpretation holds in the higher-dimensional setting. When $f(y,w) = g(y)w(1 - w)$ and $g > 0$ is strictly positive, a spreading phenomenon occurs:
	\begin{equation} \lim\limits_{t \to \infty} w(t, \bv t) = \begin{cases}
			0 & \text{for $\bv \in \real^d$ with  $\Phi(\bv) >0$} \\
			1 &  \text{for $\bv \in \real^d$ with  $\Phi(\bv) <0$}. \\
		\end{cases}\end{equation}
	(See Chapter 7 of \cite{freidlin1985}). Hence, the set $\{t \bv \in \real^d \;|\; \Phi(\bv) = 0\}$ is understood as the asymptotic front of the wave as $t \to \infty$. This front matches exactly the set $t \partial G_1$, where $G_1$ is defined in \thmref{thm:nof}. The condition that $g(y) > 0$ is not necessary for such a spreading phenomenon. Berestycki, Hamel and Roques \cite{BHR1}, \cite{BHR2} proved a necessary and sufficient condition for the spreading phenomenon (long-time survival of the branching process), which corresponds to the super-critical condition \eqref{eq:supercrit}.  They also analyze the effect of heterogeneity on the principle eigenvalue $\mu(0)$ of the associated linearized problem, and provide  conditions under which the super-critical condition holds (see Theorem 2.12 of \cite{BHR1}). 

Refinements of the linear spreading rate have been obtained, even in the case of periodic media. For example, Hamel, Nolen, Roquejoffre, and Ryzhik \cite{HNRR16} give sharper asymptotics for such fronts in periodic media in one space dimension, extending to the periodic case a well-known result of Bramson \cite{bramson1978} which shows that the front (median of the extremal particle) moves as $c_1 t - c_2 \log(t) + O(1)$ as $t \to \infty$.  A key part of the analysis in \cite{HNRR16} involves an estimate for a heat kernel analogous to $p^\zeta(t,x,y)$ (for the transformed operator $\cK_\zeta$ at \eqref{kv}), except with Dirichlet boundary condition.  This result was extended to fronts in multiple dimensions by Shabani \cite{Shab19}.   Lubetzky, Thornett, and Zeitouni \cite{LTZ18} have proved related asymptotics for the distribution of the extremal particle of a branching diffusion in periodic media. Unlike these works mentioned above, Theorem \ref{thm:nof} pertains to the structure of the branching process behind the front, where the population is growing.

\section{Proof of Proposition \ref{prop:kasymp}}

Let $X_t$ be the diffusion process with generator $\cK_\zeta$ (defined in \eqref{defopk}),
\begin{equation}\label{yt}
dX_t = V(X_t) dt + \sigma(X_t) \,dW_t,~~~ X_0 = x,
\end{equation}
with 
\[
V_i(x) =  b_i(x) + \sum_j a_{ij}(x) ( \zeta_j + \partial_{x_j} \log \ph_\zeta(x)).
\]
From homogenization theory (see Freidlin \cite{freidlin1964} and the books of Bensoussan, Lions, and Papanicolaou \cite{BLP78} and of Jikov, Kozlov, Oleinik \cite{ZKO}), it is well known that the following result holds for diffusion processes with periodic coefficients: There exists a vector $\ell(\zeta)\in \real^d$ (called the effective drift of $X_t$) and a positive definite matrix $\Xi_\zeta$ (called the effective diffusivity of $X_t$) such that 
\[
\frac{X_t - \ell(\zeta) t}{\sqrt{t}} \to \cN(0,\Xi_\zeta) ~~~\text{as}~~~~ t \to \infty,
\]
in distribution, where $\cN(0,\Xi_\zeta)$ denotes the normal random vector with mean zero and covariance matrix $\Xi_\zeta$. These quantities are given by the formulas:
\begin{equation}\label{effdr}
\ell(\zeta) = \int_{[0,1)^d} V(y) \psi^*_\zeta(y)dy = \int_{[0,1)^d} V(y) \ph_\zeta(y)\ph^*_{\zeta}(y) \,dy,
\end{equation}
\begin{equation}\label{effdiff}
\Xi_\zeta = \int_{[0,1)^d} (\nabla \eta_\zeta + I) a(y) ( \nabla \eta_\zeta + I) \ph_\zeta(y)\ph^*_{\zeta}(y)\,dy,
\end{equation}
where $\eta_\zeta(y)$ is a periodic (vector-valued) solution to
\[
\cK_\zeta \eta_\zeta = \ell(\zeta) - V(y),
\]
which is determined uniquely up to an additive constant. These $\ell(\zeta)$ and $\Xi_\zeta$ are often called the effective drift and the effective diffusivity of the operator $\cK_\zeta$ and hence, of the operator $\cL_{h_\zeta}$ since it only differs from $\cK_\zeta$ by a constant potential term. For the operator $\cL$, notice that effective drift $\bar{\bf v}$, as defined at \eqref{effectivedrift}, corresponds to $\bar{\bf v} = \ell(0)$.


We now state the following lemma about properties of the principal eigenvalue $\mu(\zeta)$. The proof of this lemma can be found in the book of Pinsky \cite{Pinsky} (Chapter 8, Theorem 2.10).


\begin{lemma}\label{lambda}
The function $\mu:\real^d \to \real$ is twice continuously differentiable and strictly convex. In addition, for each $\zeta \in \real^d$,
\begin{equation}\label{222}
\nabla\mu(\zeta) = \ell(\zeta), 
\end{equation}
and, 
\begin{equation}\label{d2mu}
D^2\mu(\zeta) = \Xi_\zeta.
\end{equation}
\end{lemma}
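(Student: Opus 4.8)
\textbf{Proof proposal for Lemma \ref{lambda}.}

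The plan is to prove the three assertions in order, relying on the homogenization formulas \eqref{effdr}--\eqref{effdiff} and on the relation $p^\zeta(t,x,y) = e^{-t\mu(\zeta)} u^{h_\zeta}(t,x,y)$ that ties $\mu(\zeta)$ to the spectral data of $\cK_\zeta$. First I would recall that $\mu(\zeta)$ is the principal periodic eigenvalue of the operator $e^{-\zeta\cdot x}\cL(e^{\zeta\cdot x}\,\cdot\,)$, and that by the Krein--Rutman theorem the principal eigenfunction $\ph_\zeta$ and the adjoint eigenfunction $\ph_\zeta^*$ are strictly positive; moreover $\zeta \mapsto \mu(\zeta)$ and $\zeta \mapsto \ph_\zeta$ depend analytically on $\zeta$ because the eigenvalue is simple and isolated (analytic perturbation theory for the family of elliptic operators with analytically-varying coefficients $b_i + a_{ij}\zeta_j$). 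This gives $\mu \in C^\infty(\real^d)$, in particular $C^2$, with no essential work.

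Next, for the gradient formula \eqref{222}, I would differentiate the eigenvalue equation \eqref{phieig} in $\zeta$. Writing $\mathcal{A}_\zeta := e^{-\zeta\cdot x}\cL(e^{\zeta\cdot x}\,\cdot\,)$, we have $\mathcal{A}_\zeta \ph_\zeta = \mu(\zeta)\ph_\zeta$; differentiating in the direction $e_j$ gives $(\partial_{\zeta_j}\mathcal{A}_\zeta)\ph_\zeta + \mathcal{A}_\zeta(\partial_{\zeta_j}\ph_\zeta) = (\partial_{\zeta_j}\mu)\ph_\zeta + \mu\,\partial_{\zeta_j}\ph_\zeta$. Pairing against $\ph_\zeta^*$ in $L^2([0,1)^d)$ and using that $\ph_\zeta^*$ is the adjoint eigenfunction (so the $\mathcal{A}_\zeta(\partial_{\zeta_j}\ph_\zeta)$ term cancels the $\mu\,\partial_{\zeta_j}\ph_\zeta$ term) and the normalization $\int \ph_\zeta\ph_\zeta^* = 1$ from \eqref{noreig}, we get $\partial_{\zeta_j}\mu(\zeta) = \int_{[0,1)^d} \big((\partial_{\zeta_j}\mathcal{A}_\zeta)\ph_\zeta\big)\ph_\zeta^*\,dx$. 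A direct computation of $\partial_{\zeta_j}\mathcal{A}_\zeta$ — it is the first-order operator $\sum_i(b_i + a_{ij}\zeta_j$-type terms), and after an integration by parts to move derivatives off $\ph_\zeta$ — identifies this with $\int V_j(y)\ph_\zeta(y)\ph_\zeta^*(y)\,dy = \ell(\zeta)_j$ using the formula \eqref{effdr}. Here I would need the identity, obtained by rewriting $\cK_\zeta$ in the form \eqref{kv} and recalling $\psi_\zeta \equiv 1$, $\psi_\zeta^* \equiv \ph_\zeta\ph_\zeta^*$, that the drift $V$ of $X_t$ and the eigenfunctions are related so that the boundary-term/integration-by-parts bookkeeping closes. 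Alternatively, and perhaps more cleanly, one can argue via the large-deviation / probabilistic route: the law of large numbers $X_t/t \to \ell(\zeta)$ from homogenization together with the representation $p^\zeta(t,x,y) = e^{-t\mu(\zeta)} u^{h_\zeta}(t,x,y)$ and the exponential tilting identity \eqref{pkrep} forces $\nabla\mu(\zeta)$ to be the velocity $\ell(\zeta)$ at which the tilted process travels; I would likely present the direct differentiation argument and remark on the probabilistic interpretation.

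For the Hessian formula \eqref{d2mu}, I would differentiate \eqref{222} once more, i.e. compute $\partial_{\zeta_k}\ell(\zeta)_j = \partial_{\zeta_k}\partial_{\zeta_j}\mu(\zeta)$ from the formula $\ell(\zeta) = \int V\,\ph_\zeta\ph_\zeta^*\,dy$; this produces a term from differentiating $V$ in $\zeta$ and a term from differentiating $\ph_\zeta\ph_\zeta^* = \psi_\zeta^*$, and the corrector field $\eta_\zeta$ solving $\cK_\zeta\eta_\zeta = \ell(\zeta) - V$ enters precisely when one expresses $\partial_{\zeta}(\ph_\zeta\ph_\zeta^*)$ in terms of the resolvent of $\cK_\zeta^*$. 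After an integration by parts the expression collapses to $\int (\nabla\eta_\zeta + I)\,a(y)\,(\nabla\eta_\zeta + I)\,\ph_\zeta\ph_\zeta^*\,dy = \Xi_\zeta$, which is \eqref{effdiff}; the symmetry and positive-definiteness of this quadratic form then also re-proves strict convexity of $\mu$ (matching the $C^2$ strictly convex claim). Since both \eqref{222} and \eqref{d2mu} are classical homogenization identities, I would keep this terse and cite Pinsky \cite{Pinsky} (Chapter 8, Theorem 2.10) and the homogenization references \cite{BLP78,ZKO} for the details, as the excerpt already does.

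\textbf{Main obstacle.} The delicate point is the bookkeeping in the first-order perturbation computation: one must carefully match the abstract adjoint-pairing identity $\partial_{\zeta_j}\mu = \langle (\partial_{\zeta_j}\mathcal{A}_\zeta)\ph_\zeta,\ph_\zeta^*\rangle$ against the concrete homogenization formula $\ell(\zeta)_j = \int V_j\,\ph_\zeta\ph_\zeta^*$, which requires correctly tracking how the exponential conjugation $e^{\pm\zeta\cdot x}$ and the change of variables to the operator $\cK_\zeta$ in divergence-vs-nondivergence form interact — in particular that the $\ph_\zeta^{-1}$ factors and the integration by parts conspire to turn the first-order part of $\mathcal{A}_\zeta$ into exactly the drift $V$ weighted by the invariant density $\psi_\zeta^* = \ph_\zeta\ph_\zeta^*$. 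Similarly, for \eqref{d2mu} the subtlety is identifying the derivative of the invariant density with the corrector, i.e. showing $\partial_\zeta(\ph_\zeta\ph_\zeta^*)$ is the object that, paired with $a\,\nabla(\cdot)$, yields the $\nabla\eta_\zeta$ cross terms. Everything else — analyticity, simplicity of the eigenvalue, strict convexity from positive-definiteness of $\Xi_\zeta$ — is standard.
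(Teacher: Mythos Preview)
Your proposal is correct, and in fact more detailed than what the paper does: the paper gives no proof at all for this lemma, simply stating that ``the proof of this lemma can be found in the book of Pinsky \cite{Pinsky} (Chapter 8, Theorem 2.10).'' Your sketch via analytic perturbation theory and the Feynman--Hellmann-type differentiation of the eigenvalue equation is precisely the argument carried out in that reference, so your approach and the paper's (cited) approach coincide.

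One minor remark: the bookkeeping you flag as the ``main obstacle'' for \eqref{222} is actually quite clean. Since $\mathcal{A}_\zeta f = \tfrac{1}{2}\sum a_{ij}(\partial_i+\zeta_i)(\partial_j+\zeta_j)f + \sum b_i(\partial_i+\zeta_i)f + rf$, one has $\partial_{\zeta_k}\mathcal{A}_\zeta f = \sum_j a_{kj}(\partial_j+\zeta_j)f + b_k f$, and applying this to $\ph_\zeta$ gives exactly $V_k\ph_\zeta$ with $V_k$ as in \eqref{kv}; no integration by parts is needed at this step, and the identity $\partial_{\zeta_k}\mu = \int V_k\ph_\zeta\ph_\zeta^*\,dx = \ell(\zeta)_k$ follows immediately. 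The Hessian computation is indeed more involved, and citing Pinsky there is entirely appropriate.
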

\begin{remark}
From equation \eqref{valc} and \eqref{valhatv} we observe that corresponding to each $(t,x,y) \in \real^+\times \real^d \times \real^d$, we choose $\hat \zeta\in \real^d$ such that 
\begin{equation}\label{vallv}
    \nabla\mu(\hat \zeta) = \ell(\hat \zeta) = \frac{y-x}{t}, ~~~~ \text{or equivalently} ~~~~\nabla \Phi(\frac{y-x}{t}) = \hat \zeta.
\end{equation}
\end{remark}

\noindent Since $\Phi$ is the Legendre transform of the function $\mu$, we have the relation 
\[
D^2\mu(\zeta) = \Big[D^2\Phi(\nabla\mu(\zeta))\Big]^{-1}.
\] 
Therefore, for each $\zeta \in \real^d$, \begin{equation}\label{hessian}
[\det D^2\Phi\big(\ell(\zeta)\big)\big]^{-1/2} = 	\big[\det(\Xi_\zeta)\big]^{1/2}.
\end{equation}

The proof of Proposition \ref{prop:kasymp} is based on estimates of the local averages
\begin{align}
\int_{[0,1)^d} p^\zeta(t,x,z + r) f(r) \,dr,  \label{localavg}
\end{align}
for $z \in \mathbb{Z}^d$ and for appropriate choice of test functions $f$.  We will choose $f \in \mathcal{B}$, where $\mathcal{B}$ is the Banach space of $\bbZ^d$ periodic continuous functions $f: \real^d \to \complex$, equipped with the supremum norm. Observe that \eqref{localavg} has the form
\[
\int_{\mathbb{R}^d} p^\zeta(t,x,y) f(y) g([y] - z) \,dy = \expE^\zeta_x[f(X_t) g([X_t] - z)]
\]
with $g(k) =  {\bf{1}}_{0}(k)$.   For parameters $\chi = (z,x,f,\zeta) \in (\bbZ^d,  \real^d, \mathcal{B}, \real^d)$, we define a family of measures on $\bbZ^d$: 
\begin{equation}
m_t^{\chi}({k}) =  \det(\Xi_\zeta)^{1/2}(\sqrt{2\pi t})^d \EXP_x^\zeta(f(X_t) {\bf{1}}_{\{k\}}([X_t] - z )), \quad k \in \mathbb{Z}^d. \label{mchidef}
\end{equation}
For $g: \bbZ^d \to \real$ having bounded support, we denote the action of $m^\chi_t$ on $g$ by
\begin{equation}
m_t^{\chi}(g) =  \det(\Xi_\zeta)^{1/2}(\sqrt{2\pi t})^d \EXP_x^\zeta(f(X_t) g([X_t] - z )). \label{mchidef2}
\end{equation}
Let
\[
\bar m_t^\chi =  e^{-{\frac{(z - \ell(\zeta)t -[x])^T \Xi_\zeta^{-1} (z - \ell(\zeta)t -[x])}{2t}}} \langle \ph_\zeta\ph_\zeta^*, f \rangle,
\]
which we also regard as a (constant) measure on $\mathbb{Z}^d$: $\bar m_t^\chi(g) = \bar m_t^\chi \sum_{k} g(k)$.  Let $\mathcal B_{+,r}$ be defined by:
\[\mathcal B_{+,r} = \{f: f \in \mathcal{B}, f\geq 0, \|f\|_{\infty} < r\}.\]
Let $B_0(L)= \{\zeta \in \real^d \,|\, |\zeta| \leq L\}$ denote the ball of radius $L$ centered at $0$ in $\real^d$.

\begin{lemma}\label{lem222}
	Let $g:\bbZ^d \to \real$ be any function with bounded support and  $\chi = (z,x,f,\zeta) \in (\bbZ^d, \real^d, \mathcal B_{+,r}, B_0(L))$. Then 
	\[
	\lim\limits_{t \to \infty}\sup\limits_{\chi} |m_t^{\chi}(g) - \bar m_t^{\chi}(g) | = 0.
	\]
\end{lemma}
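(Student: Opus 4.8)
The plan is to prove a local central limit theorem for the rescaled occupation measures $m_t^\chi$ by combining a standard CLT for periodic diffusions with a compactness/equicontinuity argument to upgrade convergence in distribution to the required uniform local statement. First I would observe that for fixed $\chi$ the quantity $\EXP_x^\zeta(f(X_t) g([X_t]-z))$ is a spatially-averaged local density of the diffusion $X_t$ generated by $\cK_\zeta$; since $X_t$ has periodic coefficients, the effective drift $\ell(\zeta)=\nabla\mu(\zeta)$ and effective diffusivity $\Xi_\zeta=D^2\mu(\zeta)$ are given by \eqref{effdr}--\eqref{effdiff}, and the homogenization CLT says $(X_t-\ell(\zeta)t)/\sqrt t \Rightarrow \cN(0,\Xi_\zeta)$. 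Testing against a function of the form $f(X_t)\,\mathbf 1_{\{k\}}([X_t]-z)$ and summing over $k$ in a bounded window, one expects $\EXP_x^\zeta(f(X_t)g([X_t]-z)) \sim (2\pi t)^{-d/2}\det(\Xi_\zeta)^{-1/2} e^{-\frac{(z-\ell(\zeta)t-[x])^T\Xi_\zeta^{-1}(z-\ell(\zeta)t-[x])}{2t}} \langle \ph_\zeta\ph_\zeta^*, f\rangle \sum_k g(k)$, because on the fast (unit) scale $f(X_t)$ equilibrates against the invariant density $\psi_\zeta^*=\ph_\zeta\ph_\zeta^*$ of the cell process while on the slow ($\sqrt t$) scale the Gaussian density is essentially constant over a unit cube. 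Multiplying through by $\det(\Xi_\zeta)^{1/2}(\sqrt{2\pi t})^d$ yields exactly $m_t^\chi(g)\to \bar m_t^\chi(g)$.

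Carrying this out rigorously, I would proceed in the following steps. (1) Reduce to $g=\mathbf 1_{\{0\}}$ and then to a single scalar test function $f\in\mathcal B_{+,r}$ by linearity (the bounded support of $g$ makes the sum finite and uniform in the remaining parameters). (2) Establish a local CLT with the spatial "phase" $f$: i.e. show $(\sqrt{2\pi t})^d\EXP_x^\zeta\big(f(X_t)\mathbf 1_{\{0\}}([X_t]-z)\big) \to \det(\Xi_\zeta)^{-1/2} e^{-\frac{|z-\ell(\zeta)t-[x]|^2_{\Xi_\zeta^{-1}}}{2t}}\langle\ph_\zeta\ph_\zeta^*,f\rangle$, say via a Fourier / characteristic function argument on the torus (writing $X_t = \ell(\zeta)t + \sqrt t\,\Sigma_\zeta Z + O(1)$-type corrections with the $O(1)$ part asymptotically distributed according to $\ph_\zeta\ph_\zeta^*$ by ergodicity of the cell process), or by invoking an Aronson-type two-sided Gaussian bound (as in Norris \cite{Norris}, already cited) together with a parametrix/homogenization expansion of $p^\zeta$. (3) Upgrade to uniformity over $\chi=(z,x,f,\zeta)$: the parameter $\zeta$ ranges over the compact ball $B_0(L)$, on which $\ell(\zeta),\Xi_\zeta,\ph_\zeta,\ph_\zeta^*$ depend continuously, so a standard subsequence/compactness argument reduces uniform convergence in $\zeta$ to pointwise convergence plus equicontinuity; uniformity over $f\in\mathcal B_{+,r}$ is linear plus the uniform bound $\|f\|_\infty<r$; uniformity over $x\in\real^d$ follows from $\bbZ^d$-periodicity (only $[x]$ and the fractional part matter, and the fractional part lives in a compact cell); uniformity over $z\in\bbZ^d$ is the delicate point and is handled by splitting into the bulk region $|z-\ell(\zeta)t-[x]|\le C\sqrt{t\log t}$, where the Gaussian local limit theorem applies uniformly, and the tail region, where both $m_t^\chi(g)$ and $\bar m_t^\chi(g)$ are super-polynomially small by the Aronson upper bound of Lemma \ref{aronlem} (or directly Norris \cite{Norris}), so their difference vanishes uniformly. (4) Conclude by assembling the pieces.

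The main obstacle is step (2)--(3) combined: obtaining the local limit theorem \emph{uniformly in $z$ up to the tails and uniformly in the oscillating test function $f$}, rather than merely the distributional CLT. The ordinary homogenization CLT gives convergence of $\EXP_x^\zeta[\Phi(\frac{X_t-\ell(\zeta)t}{\sqrt t})]$ for bounded continuous $\Phi$, but here we need to resolve $X_t$ at the unit scale (through $\mathbf 1_{\{0\}}([X_t]-z)$ and $f(X_t)$) while simultaneously controlling the macroscopic Gaussian profile; this requires either a careful Fourier-analytic local limit theorem on $\real^d$ with a periodic "internal state" (proving equidistribution of the pair (macroscopic displacement, microscopic cell position) with the product limit $\cN(0,\Xi_\zeta)\otimes(\ph_\zeta\ph_\zeta^*\,dx)$), or an appeal to sharp heat-kernel asymptotics for $p^\zeta$. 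I expect the cleanest route is the Fourier approach: expand $\EXP_x^\zeta[e^{i\theta\cdot X_t} f(X_t)]$ using the spectral decomposition of the periodized operator $e^{-i\theta\cdot x}\cK_\zeta e^{i\theta\cdot x}$ near $\theta=0$, whose principal eigenvalue behaves like $i\ell(\zeta)\cdot\theta - \tfrac12\theta^T\Xi_\zeta\theta + O(|\theta|^3)$ and whose principal eigenfunction converges to $1$ (resp. the adjoint to $\ph_\zeta\ph_\zeta^*$), then invert the Fourier transform and control the remainder using exponential decay of the non-principal spectrum; uniformity in $\zeta\in B_0(L)$ is then inherited from the analytic (indeed real-analytic) dependence of this simple eigenvalue/eigenfunction on $(\theta,\zeta)$ near $(0,\zeta)$.
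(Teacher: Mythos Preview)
Your proposal is essentially correct and, in its final paragraph, lands on precisely the paper's method: write $m_t^\chi(g)$ via Fourier inversion in terms of the kernels $Q_\zeta(\theta,t)f(x)=\EXP_x^\zeta\big(f(X_t)e^{i\theta([X_t]-[x]-\ell(\zeta)t)}\big)$, use the spectral decomposition of $Q_\zeta(\theta,t)$ near $\theta=0$ (principal eigenvalue $\lambda(\zeta,\theta)^t$ with $\lambda(\zeta,\theta/\sqrt t)^t\to e^{-\theta^T\Xi_\zeta\theta/2}$, principal projector $\langle\ph_\zeta\ph_\zeta^*,\,\cdot\,\rangle$), and control the rest by exponential decay.

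Two comments on your execution versus the paper's. First, the bulk/tail splitting in $z$ you propose in step (3) is unnecessary once you commit to the Fourier route: in the representation $m_t^\chi(g)=\det(\Xi_\zeta)^{1/2}t^{d/2}\int_{(S^1)^d}\tilde g(\theta)e^{-i\theta(z-\ell(\zeta)t-[x])}Q_\zeta(\theta,t)f(x)\,d\theta$, the variable $z$ appears only through a unimodular phase, so every bound on the integrand is automatically uniform in $z$ (and in $x$, for the same reason). This is one of the main advantages of the Fourier approach over a direct heat-kernel argument. Second, ``exponential decay of the non-principal spectrum'' is not quite enough by itself: near $\theta=0$ that handles the remainder after the principal projection, but for $\theta$ bounded away from $0$ on the torus you need the \emph{entire} operator $Q_\zeta(\theta,t)$ to decay, i.e.\ $\sup\{r(Q_\zeta(\theta,1)):\|\theta\|\ge\theta_0,\ \|\zeta\|\le L\}<1$. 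The paper isolates this as a separate lemma and proves it by a positivity/irreducibility argument (if $|\lambda|=1$ were an eigenvalue for some $\theta\ne 0$, comparing with the principal eigenfunction of $Q_\zeta(0,t)$ forces a contradiction). You should make this step explicit, since the perturbative eigenvalue expansion only covers a neighborhood of $\theta=0$.
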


Before proving this, let us use this to finish the proof of Proposition \ref{prop:kasymp}.

\begin{proof}[Proof of Proposition \ref{prop:kasymp}]

\noindent From Lemma \ref{lem222} above, for the function $g(k) = {\bf{1}}_{0}(k)$, we have
\[
\lim\limits_{t \to \infty}\sup\limits_{\substack{f \in C([0,1)^d)\\ \|f\| < r}} \sup\limits_{\substack{{x \in \real^d,  z \in \bbZ^d}\\ \|\zeta\| \leq L }}		\Big|\det(\Xi_\zeta)^{1/2}(\sqrt{2\pi t})^d  \int_{[0,1)^d}p^\zeta(t,x ,z+w)f(w)dw \]	\begin{equation}\label{result333}-  \exp\Big(-{\frac{(z-[x]-\ell(\zeta)t)^T \Xi_\zeta^{-1}(z-[x]-\ell(\zeta)t)}{2t}}\Big) \langle \ph_\zeta\ph_\zeta^*, f \rangle\Big|= 0.
\end{equation}
		To prove Proposition \ref{prop:kasymp}, we would like to be able to replace $f$ by a delta function at $w \in [0,1)^d$. This is easily justified if we have an appropriate bound on the derivative of $p^\zeta(t,x,y)$ in the $y$ variable. In this case, the weighted average of $p^\zeta$ over a small domain approximates the value of $p^\zeta$ at any point inside the domain. To get such bounds on the derivative of $p^\zeta$, we observe that  $p^\zeta(t,x,y) \leq c/t^{d/2}$ for all $x,y \in \real^d$, since $p^\zeta(t,x,y)$ is the fundamental solution of the PDE with periodic coefficients, with no potential term (see, for example, arguments in the proof of  Lemma \ref{aronlem}). From the Schauder estimate (see, Friedman \cite{Friedman}),  it then follows that, $\|\nabla_yp^\zeta(t,x,y)\| \leq \sup\{ p^\zeta(s,x',y')\Big| s\in (t-1,t), x',y' \in \real^d\} \leq c/(t-1)^{d/2} \leq \tilde c/t^{d/2}$. This is enough to conclude from that 
		\begin{equation}
			\lim\limits_{t \to \infty}\sup\limits_{\substack{{x \in \real^d,w \in [0,1)^d, z \in \bbZ^d}\\ \|\zeta\| \leq L }}	\Big|\det(\Xi_\zeta)^{1/2}(\sqrt{2\pi t})^d  p^\zeta(t,x ,z+w)	
		\end{equation}
		\[
		-  \exp\Big(-{\frac{(z-x-\ell(\zeta)t)^T \Xi_\zeta^{-1}(z-x-\ell(\zeta)t)}{2t}}\Big) \ph_\zeta(y)\ph_\zeta^*(y)\Big|= 0.
		\]
		Writing $y \in \real^d$ instead of $z + w$ with $w \in [0,1)^d, z \in \bbZ^d$, we obtain
		\begin{equation}
		\lim\limits_{t \to \infty}\sup\limits_{\substack{{x,y \in \real^d}\\ \|\zeta\| \leq L }}	\Big|\det(\Xi_\zeta)^{1/2}(\sqrt{2\pi t})^d  p^\zeta(t,x ,y) -  e^{-\frac{t}{2}\big(\frac{y-x}{t}- \ell(\zeta)\big)^T \Xi_\zeta^{-1}\big(\frac{y-x}{t}- \ell(\zeta)\big)} \ph_\zeta(y)\ph_\zeta^*(y) \Big|= 0. \label{vunif}
		\end{equation}
		Note that the exponent in the above formula is slightly different. But the difference is negligible in the limit.
		
		Now suppose that $L_0 > 0$ is fixed, and $\|y-x\|/t \leq L_0$, for all $x,y \in \real^d$ and $t > 0$. Then, recall from \eqref{vallv} that if we choose $c = (y-x)/t$, we have a corresponding $\hat{\zeta}$ such that $\ell(\hat{\zeta}) = c$, or equivalently,  $\nabla\Phi(c) = \hat{\zeta}$.  Morevoer, there is $L$, depending on $L_0$, such that $|\hat{\zeta}| \leq L$ holds if $\|y-x\|/t \leq L_0$. Thus, \eqref{vunif} can be applied to those $c$ and $\hat{\zeta}$ uniformly to obtain
		\[
		\lim\limits_{t \to \infty}\sup\limits_{\|x-y\| \leq tL_0}		\Big|\det(\Xi_\zeta)^{1/2}(\sqrt{2\pi t})^d  p^{\hat{\zeta}}(t,x ,y) -  \ph_{\hat{\zeta}}(y)\ph^*_{\hat{\zeta}}(y)\Big|= 0.
		\]
We claim that for any $L < \infty$ fixed, the periodic eigenfunctions normalized by \eqref{noreig} satisfy
\begin{equation}
\inf_{y \in [0,1]^d} \inf_{\| \zeta \| \leq L} \ph_{\zeta}(y) \ph^*_{\zeta}(y)  > 0. \label{philower}
\end{equation}
Therefore, substituting $[\det D^2\Phi\big(\frac{y-x}{t}\big)\big]^{-1/2} = 	\big[\det(\Xi_{\hat{\zeta}})\big]^{1/2}$ from \eqref{hessian}, we get
		\[
		\lim\limits_{t \to \infty}\sup\limits_{\|x-y\| \leq tL_0}	\Big|\frac{1}{\ph_{\hat{\zeta}}(y)\ph^*_{\hat{\zeta}}(y)}[\det D^2\Phi\big(\frac{y-x}{t}\big)\big]^{-1/2}(\sqrt{2\pi t})^d  p^{\hat{\zeta}}(t,x ,y) - 1 \Big|= 0.
		\]

Finally, we establish the claim \eqref{philower}. If this is not the case, then there must be sequences $\{y_n\}_{n=1}^\infty \subset [0,1]^d$ and $\{ \zeta_n \}_{n=1}^\infty$, with $\|\zeta_n\|\leq L$, such that
\[
\lim_{n \to \infty} \ph_{\zeta_n}(y_n) \ph^*_{\zeta_n}(y_n) = 0.
\]
Since $\zeta$ and $y$ are confined to a compact set, we can extract a subsequence of the pairs $\{(y_n,\zeta_n)\}_{n=1}^\infty$ that converges to some $(\bar y,\bar \zeta)$. Joint continuity of $(y,\zeta) \mapsto \ph_{\zeta}(y) \ph^*_{\zeta}(y)$ implies that $\ph_{\bar \zeta}(\bar y) \ph^*_{\bar \zeta}(\bar y) = 0$, although the normalization \eqref{noreig} holds for $\ph_{\bar \zeta}$ and $\ph^*_{\bar \zeta}$. This is a contradiction, since the periodic principal eigenfunctions of elliptic operators $\mathcal{L}$ and $\mathcal{L}^*$ have a strict sign. We conclude that \eqref{philower} holds.

\end{proof}

To complete the proof of Proposition \ref{prop:kasymp}, we now prove Lemma \ref{lem222}. This follows an argument of Hennion and Herv\'e  \cite{HH} where a very similar lemma was proved (see  Lemma VI.4 of \cite{HH}) in the discrete time one dimensional setting; we will explain the technical differences in Remark \ref{rem:diff} below.

Let us define 
\[
(S^1)^d = (\real/(2 \pi \mathbb{Z}))^d = \{  (\theta_1, \theta_2, \cdots, \theta_d)\;|\;  \theta_i \in \real/(2\pi \mathbb{Z}), \;\; i = 1,\dots,d\}.
\]
For $g: \bbZ^d \to \real$, for $\theta \in(S^1)^d$ , $z\in \bbZ^d$, we use the following definitions of Fourier Transform and Inverse Fourier Transform:
\[
\hat g(\theta) = \frac{1}{(\sqrt{2\pi})^d}\sum_{z\in \bbZ^d} g(z)e^{i\theta z}, \quad \quad g(z) = \frac{1}{(\sqrt{2\pi})^d} \int_{(S^1)^d}\hat g(\theta) e^{-i\theta z} d\theta.
\]
Letting $\hat{g}(-\theta) = \tilde g(\theta)$, we have
\begin{align}\label{fif}
(\sqrt{2\pi t})^d\EXP_x[f(X_t) g([X_t])] & = t^{d/2}\EXP_x[f(X_t) \int_{(S^1)^d}\tilde g(\theta) e^{i \theta [X_t]} \,d\theta] \no \\
& = t^{d/2}\int_{(S^1)^d}\tilde g(\theta) \EXP_x[f(X_t)  e^{i \theta [X_t]} ] \,d \theta.
\end{align}
For $\theta \in \real^d$, $t \geq 0$, let us define the {\em Fourier Kernels} $Q_\zeta(\theta,t)$, acting on $\mathcal{B}$, by 
\begin{equation}\label{fq}
Q_\zeta(\theta, t)f(x) = \EXP_x^\zeta (f(X_t)e^{i \theta ([X_t]- [x]- \ell(\zeta)t)}).
\end{equation}

Now recalling the definition \eqref{mchidef}, observe that
\begin{align}
m_t^{\chi}(g) & = \det(\Xi_\zeta)^{1/2}(\sqrt{2\pi t})^d \int_{\mathbb{R}^d} p^\zeta(t,x,y)f(y) g([y] - z)) \,dy \nonumber \\
& =\det(\Xi_\zeta)^{1/2}(\sqrt{2\pi t})^d \EXP_x^\zeta(f(X_t)g([X_t] - z)) . \label{mchig1}
\end{align}
Using the Fourier inversion formula and Fubini's theorem, \eqref{mchig1} can be written as
\[
m_t^{\chi}(g) =\det(\Xi_\zeta)^{1/2}t^{d/2} \int_{{(S^1)^d}} \tilde{g}(\theta)e^{-i\theta(z -\ell(\zeta)t - [x])} (Q_\zeta(\theta,t)f)(x)  d\theta,
\]
where $\tilde{g}(\theta):= \hat{g}(-\theta)$.

The Fourier kernels $\{Q_\zeta(\theta,t)\}_{t \geq 0}$ are a family of compact operators on $\mathcal{B}$ and $e^{i\theta \ell(\zeta)t}Q_\zeta(\theta, t)$ is $2\pi \bbZ^d$ periodic in the parameter~$\theta$.  One can show that for a fixed $\theta \in (S^1)^d$, the family $\{Q_\zeta(\theta,\cdot)\}_{t \geq 0}$ forms a semigroup. That is, for each $x\in  \real^d$, $t,s \geq 0$,
\begin{equation}\label{semigroup}
Q_\zeta(\theta, t)\circ Q_\zeta(\theta,s)f(x) = Q_\zeta(\theta, t+ s)f(x).
\end{equation}
Observe that, for $\theta = 0$, $Q_\zeta(0,t)$ is the Markov operator corresponding to the process $X_t$, which is generated by $\cK_\zeta$. Therefore, since zero is the principal simple eigenvalue of the operator $\cK_\zeta $, $1$ is the principal simple eigenvalue of the operator $Q_\zeta(0,t)$. By a perturbation theorem (see, for example, Theorem III.8 in \cite{HH}), there exists a small $\theta_0>0$ such that, for each $\theta \in (S^1)^d$ with $\|\theta\|\leq \theta_0$, the principal eigenvalues of the operators $Q_\zeta(\theta,1)$ are simple, for each $\|\zeta\|\leq L$. We denote these principle eigenvalues of the operators $Q_\zeta(\theta,1)$ by $\lambda(\zeta,\theta) \in \complex$, for $\|\theta\|\leq \theta_0$. Thus, from the semigroup property \eqref{semigroup} and the time homogeneity of the coefficients of the partial differential operator $\cK_\zeta$, we conclude that the principal eigenvalue of the operator $Q_\zeta(\theta,t)$ is $\lambda(\zeta,\theta)^t$ for each $t\geq0$.

The proof of Lemma \ref{lem222} is based on the following spectral decomposition of the operator $Q_\zeta(\theta,t)$.

\begin{lemma}\label{lem111}
For a fixed $L >0$, there exist $\theta_0>0$, $q > 0$, and $\eta >0$ such that, for each $t> 1$, $\theta \in (S^1)^d$ with $\|\theta\| < \theta_0$,  $f \in \mathcal{B}$, and $|\zeta| \leq L$ we have 
\begin{equation}\label{kii}
Q_\zeta(\theta, t)f(x) =  \lambda(\zeta, \theta)^t\big[\langle \ph_\zeta\ph_\zeta^*, f \rangle  +  (M_\zeta(\theta,t)f)(x)\big] +  (N_\zeta(\theta,t)f)(x), 
\end{equation}
where the following bounds for the operator $M_\zeta(\theta,t)$ and $N_\zeta(\theta,t)$ hold:
\begin{equation}\label{Ln}
\|M_\zeta(\theta,t) f\|_{L^\infty} \leq q\|f\|_{\infty}\|\theta\|,~~~~~~~
\|N_\zeta(\theta, t)f\|_{L^\infty}  \leq qe^{-\eta t}\|f\|_{\infty},
\end{equation}
Moreover, there exists a constant $C_1$ such that for each $\theta \in (S^1)^d$ with $\|\frac{\theta}{\sqrt{ t}}\| \leq \theta_0$ we have
\begin{equation}\label{eqk1}
\| \lambda(\zeta, \frac{\theta}{\sqrt{t}})^t - e^{-\frac{\theta^T\Xi_\zeta\cdot \theta}{2}}\| \leq \frac{C_1}{\sqrt t}\|\theta\|^3e^{\frac{-\theta^T\Xi_\zeta \cdot \theta}{4}},
\end{equation}
uniformly over $|\zeta| \leq L$. 
\end{lemma}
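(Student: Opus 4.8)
The plan is to establish the spectral decomposition \eqref{kii} by treating the Fourier kernel $Q_\zeta(\theta,1)$ as an analytic perturbation (in the parameter $\theta$) of the Markov operator $Q_\zeta(0,1)$, which has $1$ as a simple isolated principal eigenvalue with a spectral gap. First I would fix $L>0$ and recall that $Q_\zeta(0,1)$ is the time-$1$ transition operator of the periodic diffusion $X_t$ with generator $\cK_\zeta$; since $\psi_\zeta\equiv 1$ and $\psi_\zeta^*=\ph_\zeta\ph_\zeta^*$ are the principal eigenfunctions of $\cK_\zeta$ and $\cK_\zeta^*$, the principal spectral projection of $Q_\zeta(0,1)$ acting on $\mathcal{B}$ is $f\mapsto \langle \ph_\zeta\ph_\zeta^*, f\rangle\cdot 1$, and the rest of the spectrum lies in a disk of radius $e^{-2\eta_0}<1$ for some $\eta_0>0$, uniformly over $|\zeta|\le L$ by compactness of the parameter set and continuity of the spectral data. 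The map $\theta\mapsto Q_\zeta(\theta,1)$ extends analytically to a complex neighborhood of $0$ (differentiating \eqref{fq} under the expectation is justified since $[X_1]-[x]$ has exponential moments for the periodic diffusion over a bounded time interval), so standard analytic perturbation theory (Kato; see also Theorem III.8 of \cite{HH}) gives a $\theta_0>0$ and a simple eigenvalue $\lambda(\zeta,\theta)$ with spectral projection $\Pi_\zeta(\theta)$ depending analytically on $\theta$ for $\|\theta\|\le\theta_0$, uniformly in $|\zeta|\le L$, with the complementary part $R_\zeta(\theta)=Q_\zeta(\theta,1)-\lambda(\zeta,\theta)\Pi_\zeta(\theta)$ having spectral radius bounded by $e^{-\eta}<1$ for some $\eta>0$.

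From here, iterating the decomposition via the semigroup property \eqref{semigroup} gives, for integer $t$, $Q_\zeta(\theta,t)=\lambda(\zeta,\theta)^t\Pi_\zeta(\theta)+R_\zeta(\theta)^t$, and for non-integer $t$ one composes with $Q_\zeta(\theta,\{t\})$, absorbing the bounded fractional-time factor into the constants. Writing $\Pi_\zeta(\theta)f = \langle\ph_\zeta\ph_\zeta^*,f\rangle\cdot 1 + (\Pi_\zeta(\theta)-\Pi_\zeta(0))f$ and setting $M_\zeta(\theta,t)f := (\Pi_\zeta(\theta)-\Pi_\zeta(0))f$ together with $N_\zeta(\theta,t)f := R_\zeta(\theta)^{\lfloor t\rfloor}Q_\zeta(\theta,\{t\})f$ (suitably organized so the fractional part lands on the right operator), the bounds \eqref{Ln} follow: $\|M_\zeta(\theta,t)f\|_\infty\le q\|f\|_\infty\|\theta\|$ because $\theta\mapsto\Pi_\zeta(\theta)$ is analytic (hence Lipschitz near $0$) with $\Pi_\zeta(0)$ subtracted, and $\|N_\zeta(\theta,t)f\|_\infty\le q e^{-\eta t}\|f\|_\infty$ from the spectral-radius bound on $R_\zeta(\theta)$, the uniformity over $|\zeta|\le L$ coming again from compactness.

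For the Taylor expansion \eqref{eqk1} of the principal eigenvalue, I would compute the first few $\theta$-derivatives of $\lambda(\zeta,\theta)$ at $\theta=0$ using the standard perturbation formulas $\lambda(\zeta,0)=1$, and $\partial_\theta^\alpha\lambda(\zeta,0)$ expressed through $\langle\psi_\zeta^*,(\partial_\theta^\alpha Q_\zeta(0,1))\psi_\zeta\rangle$ and resolvent corrections. The rescaling $\theta\mapsto\theta/\sqrt{t}$ combined with $\lambda(\zeta,\theta/\sqrt t)^t=\exp(t\log\lambda(\zeta,\theta/\sqrt t))$ and the expansion $t\log\lambda(\zeta,\theta/\sqrt t) = -\tfrac12\theta^T\Xi_\zeta\theta + O(\|\theta\|^3/\sqrt t)$ yields \eqref{eqk1}; here the second-order coefficient is identified with the effective diffusivity $\Xi_\zeta$ via the CLT for $X_t$ (equivalently $D^2\mu(\zeta)=\Xi_\zeta$ from Lemma \ref{lambda}), and the vanishing of the first-order term reflects the centering by $\ell(\zeta)t$ built into \eqref{fq}. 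The main obstacle is securing all of the above \emph{uniformly in} $\zeta\in B_0(L)$: one must verify that $\theta_0$, $q$, $\eta$, and $C_1$ can be chosen independent of $\zeta$, which requires the joint continuity (indeed, analyticity in $\zeta$ as well) of the operators $Q_\zeta(\theta,1)$, of the eigenfunctions $\ph_\zeta,\ph_\zeta^*$, and of the spectral gap of $Q_\zeta(0,1)$, and then a compactness argument over $\{|\zeta|\le L\}$; the needed continuity of $\ph_\zeta\ph_\zeta^*$ and its strict positivity are exactly what was used in the proof of Proposition \ref{prop:kasymp} above, and the uniform spectral gap for periodic diffusions follows from Aronson-type bounds (as in Lemma \ref{aronlem}) giving a uniform Doeblin/minorization condition.
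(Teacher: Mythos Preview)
Your proposal is correct and follows essentially the same approach as the paper: the paper's proof is little more than a citation to Proposition~VI.2 of Hennion--Herv\'e \cite{HH}, noting that the hypotheses denoted $H''[2]$ there (simplicity of the principal eigenvalue of $Q_\zeta(0,1)$, analyticity of $\theta\mapsto Q_\zeta(\theta,1)$ near $0$, etc.) are satisfied uniformly in $|\zeta|\le L$, and that $\nabla_\theta\lambda(\zeta,0)=0$, $D_\theta^2\lambda(\zeta,0)=-\Xi_\zeta$. What you have written is precisely an unpacking of that citation---analytic perturbation of the principal spectral projection, iteration via the semigroup property, Taylor expansion of $\log\lambda$---together with the compactness argument in $\zeta$ that the paper only alludes to; so the two arguments coincide in substance.
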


\begin{proof}[Proof of Lemma \eqref{lem111}]
In the discrete time one dimensional setting, Lemma \eqref{lem111} is proved in Hennion and Herv\'e \cite{HH} (see Proposition VI.2, therein), but the arguments there also go through in the continuous time $d$-dimensional setting.   The assumptions of that Proposition, denoted by $H''$[2] in \cite{HH} (assumptions on the Banach space being sufficiently big, $Q_\zeta(0,1)$ having $1$ as its simple eigenvalue corresponding to the eigenfunction $f \equiv 1$, and the operators $Q_\zeta(\theta,1)$ being sufficiently regular in the variable $\theta$ in a small neighborhood around $\theta = 0$) are all satisfied in our setting, uniformly in $\|\zeta\|\leq L$. The proof of \eqref{eqk1} (or, rather, its analog in \cite{HH}) relies on the fact that $\nabla_\theta\lambda(\zeta,\theta)\big|_{\theta = 0} = 0$ and $D_\theta^2\lambda(\zeta,\theta)\big|_{\theta = 0} = - \Xi_\zeta$, which follows from arguments similar to those used in proving \eqref{hessian}. 
\end{proof}

To apply Lemma \ref{lem111} in the proof of Lemma \ref{lem222}, we will need the following fact about the eigenvalues $\lambda(\zeta,\theta)$. For a bounded linear operator $Q$ on Banach space $\mathcal{B}$, let $r(Q)$ denote its spectral radius. 

\begin{lemma}\label{lem:eigs} 
For each $\theta_0 \in (0, 2\pi)$, $L>0$, 
\[
\alpha(\theta_0,L) := \sup \{ r(Q_\zeta(\theta,1)) \;|\; \|\zeta\| \leq L,\;\; \theta \in (S^1)^d, \;\; \|\theta\| \geq \theta_0   \} < 1.
\]
\end{lemma}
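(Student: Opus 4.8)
The plan is to argue by contradiction, exploiting compactness of the parameter set $\{(\zeta,\theta) : \|\zeta\|\le L,\ \theta\in(S^1)^d,\ \|\theta\|\ge\theta_0\}$ together with an upper semicontinuity property of the spectral radius that holds because the $Q_\zeta(\theta,1)$ are \emph{compact} operators on $\mathcal B$ depending continuously (in operator norm) on the parameters. First I would record that $(\zeta,\theta)\mapsto Q_\zeta(\theta,1)$ is norm-continuous on the relevant parameter set: the kernel $p^\zeta(t,x,y)$ and the multiplier $e^{i\theta([X_t]-[x]-\ell(\zeta)t)}$ depend continuously on $(\zeta,\theta)$ (continuity in $\zeta$ follows from continuous dependence of $\ph_\zeta$, $V$, and hence the diffusion $X_t$ on $\zeta$, as already used for \eqref{philower}), and the smoothing from the heat kernel at time $t=1$ makes this a norm-continuous family of compact operators. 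For a norm-continuous family of operators, $r(Q_\zeta(\theta,1)) = \lim_{n} \|Q_\zeta(\theta,1)^n\|^{1/n}$ and, more usefully, for compact operators the nonzero spectrum varies upper semicontinuously; in particular $(\zeta,\theta)\mapsto r(Q_\zeta(\theta,1))$ is upper semicontinuous. Hence the supremum over the compact set is attained at some $(\bar\zeta,\bar\theta)$ with $\|\bar\theta\|\ge\theta_0$, and it suffices to show $r(Q_{\bar\zeta}(\bar\theta,1)) < 1$ for each fixed such pair.

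Next I would prove the pointwise bound $r(Q_\zeta(\theta,1))<1$ whenever $\|\theta\|\ge\theta_0 > 0$ (equivalently $\theta\neq 0$ in $(S^1)^d$). The operator $Q_\zeta(0,1)$ is the Markov semigroup operator of the diffusion $X_t$ (generated by $\cK_\zeta$, periodic coefficients, no potential), for which $1$ is a simple eigenvalue with eigenfunction $\psi_\zeta\equiv 1$ and the rest of the spectrum strictly inside the unit disc; in particular $\|Q_\zeta(0,1)f\|_\infty\le\|f\|_\infty$ with equality forcing $f$ constant. For $\theta\neq0$ I would show the strict inequality $|Q_\zeta(\theta,1)f(x)| < \|f\|_\infty$ cannot persist to give an eigenvalue of modulus $1$: if $Q_\zeta(\theta,1)f = \lambda f$ with $|\lambda|=1$ and $\|f\|_\infty=1$, then from
\[
|f(x)| = |Q_\zeta(\theta,1)f(x)| = \Big|\EXP_x^\zeta\big(f(X_1)e^{i\theta([X_1]-[x]-\ell(\zeta))}\big)\Big| \le \EXP_x^\zeta(|f(X_1)|) \le 1,
\]
at a point $x_0$ where $|f(x_0)|=1$ the triangle inequality must be an equality, which forces the complex numbers $f(X_1)e^{i\theta([X_1]-[x_0]-\ell(\zeta))}$ to be $\PROB_{x_0}^\zeta$-a.s.\ equal to a fixed unimodular constant. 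Because $X_1$ started at $x_0$ has a positive density on all of $\real^d$ (Aronson), this means $f(y)e^{i\theta[y]}$ is constant a.e.; combined with periodicity of $f$ and $\theta\notin 2\pi\bbZ^d$ this is impossible unless $f\equiv 0$, a contradiction. Iterating $Q_\zeta(\theta,1)=Q_\zeta(\theta/1,1)$ via the semigroup property and the compactness of $Q_\zeta(\theta,1/2)$ (so the spectral radius is governed by an actual eigenvalue) then yields $r(Q_\zeta(\theta,1))<1$.

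The main obstacle I anticipate is making the upper semicontinuity of $(\zeta,\theta)\mapsto r(Q_\zeta(\theta,1))$ fully rigorous and confirming norm-continuity of the family $Q_\zeta(\theta,1)$ in $\zeta$ (continuity in $\theta$ is immediate from the explicit multiplier). The cleanest route is: compact operators have discrete nonzero spectrum with finite multiplicities, and for a norm-convergent sequence $Q_n\to Q$ of compact operators, any sequence $\lambda_n\in\sigma(Q_n)$ with $|\lambda_n|$ bounded below by a fixed positive constant has a subsequence converging to some $\lambda\in\sigma(Q)$ (this is a standard consequence of Riesz–Schauder theory / convergence of spectral projections). Applying this along a maximizing sequence $(\zeta_n,\theta_n)\to(\bar\zeta,\bar\theta)$ with $r(Q_{\zeta_n}(\theta_n,1))\to\alpha(\theta_0,L)$ gives $\alpha(\theta_0,L)\in\sigma(Q_{\bar\zeta}(\bar\theta,1))$ (note $r=|\lambda|$ is attained for compact operators), hence $\alpha(\theta_0,L)=r(Q_{\bar\zeta}(\bar\theta,1))<1$ by the pointwise step. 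Norm-continuity in $\zeta$ follows by writing $Q_\zeta(\theta,1)-Q_{\zeta'}(\theta,1)$ as an integral against $p^\zeta - p^{\zeta'}$ plus a term from the difference of the phases $\ell(\zeta)$ vs $\ell(\zeta')$, and estimating $\|p^\zeta(1,\cdot,\cdot)-p^{\zeta'}(1,\cdot,\cdot)\|$ via continuous dependence of the coefficients of $\cK_\zeta$ on $\zeta$ together with standard parabolic estimates; the resulting operator-norm bound on $\mathcal B$ tends to $0$ as $\zeta'\to\zeta$.
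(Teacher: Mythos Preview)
Your proposal is correct and follows essentially the same strategy as the paper: bound $\|Q_\zeta(\theta,1)\|\le 1$ trivially, rule out any eigenvalue of modulus $1$ for $\theta\neq 0$ by an equality-in-the-triangle-inequality argument that forces $f(y)e^{i\theta[y]}$ to be constant and then contradicts $\bbZ^d$-periodicity of $f$, and invoke compactness of $Q_\zeta(\theta,1)$ so that $r<1$. The paper runs the pointwise step by comparing $|f|$ with the positive principal eigenfunction $g$ of $Q_\zeta(0,t)$ (which here is $g\equiv 1$), writing $f=e^{ih}g$, and reaching the same periodicity contradiction; your direct use of a maximizing point $x_0$ is a cosmetic variant of this. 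Where you actually go further than the paper is in the uniformity: the paper only asserts continuity of $r(Q_\zeta(\theta,1))$ in $\theta$ and then passes to the supremum over $(\zeta,\theta)$ without comment, whereas you supply the missing ingredient---upper semicontinuity of the spectral radius under norm-convergence of compact operators, together with norm-continuity of $(\zeta,\theta)\mapsto Q_\zeta(\theta,1)$---so that the supremum is attained and the pointwise bound finishes the job.
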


\begin{proof}[Proof of Lemma \ref{lem:eigs}]
From the definition of the operators $Q_\zeta(\theta,1)$ and using the fact that $e^{i\theta \ell(\zeta)t}Q_\zeta(\theta, t)$ is $2\pi \bbZ^d$ periodic in the parameter~$\theta$, we have, for a fixed $\zeta \in \real^d$, the function $r(Q_\zeta(\theta,1))$ is continuous in the variable $\theta \in (S^1)^d$. Let us fix $\zeta\in \real^d$ with  $\|\zeta\|\leq L$. It is clear that $r(Q_\zeta(\theta,1)) \leq 1$ for each $\theta\in (S^1)^d$. Indeed, if $f\in \mathcal{B}$ with $\|f\| = 1$, \begin{align*}
    \|Q_\zeta(\theta,t)f\| &= \|\EXP_x(f(X_t)e^{i\theta([X_t] - [x] - \ell(\zeta))})\|\\
    &\leq \|\EXP_x(|f(X_t)|) \| \\ 
    & = \|~Q_\zeta(0,t)|f|~\| \leq 1.
\end{align*}  
That is, if $\eta(\zeta, \theta)\in \complex$ is any eigenvalue of the operator $Q_\zeta(\theta,1)$, $|\eta(\zeta, \theta)| \leq 1$ for all $\theta \in (S^1)^d$. Now for $\theta \in (0,2 \pi)$, suppose that, there exists an eigenfunction $f \in \mathcal{B}$ of the operator $Q_\zeta(\theta, t)$ with $\|f\| = 1$ corresponding to the eigenvalue $\eta(\zeta, \theta) \in \complex$ such that $|\eta(\zeta, \theta)| = 1$. That is, for each $x \in [0,1)^d$,
\begin{equation}
	|\EXP_{x}( f(X_t)e^{i\theta([X_t] - [x] - \ell(\zeta)t ) })|= |f(x)|.
\end{equation}
We know that $1$ is the simple principal eigenvalue of the operator $Q_\zeta(0,t)$. Thus, there exists an eigenfunction $g \in \mathcal{B}$ of $Q_\zeta(0,t)$ such that $g$ is strictly positive and 
\begin{equation}
	\EXP_{x} (g(X_t) )= g(x), \quad x \in [0,1)^d.
\end{equation}
Since $g > 0$, we can multiply $g$ by a constant so that $|f(x)| \leq g(x)$ holds for all $x \in [0,1)^d$ with equality $|f(x_0)| = g(x_0)$ holding at some point $x_0 \in [0,1)^d$. Now, 
\[
\EXP_{x_0}(\big|f(X_t)e^{i\theta([X_t] - [x] - \ell(\zeta)t)}\big|)  \geq |\EXP_{x_0}( f(X_t)e^{i\theta([X_t] - [x] - \ell(\zeta)t)})|\]\[ = |f(x_0)| = g(x_0) = \EXP_{x_0}g(X_t). 
\]
This implies that,
\[
\EXP_{x_0}(|f(X_t)| - g(X_t)) = Q_\zeta(0,t)(|f| - g)(x_0)\geq 0.
\]
Since $|f| \leq g$ and $Q_\zeta(0,t)$ is a positive operator, we conclude that \[\EXP_{x_0}(|f(X_t)| - g(X_t)) =0.\] That is,
\[
\int_{\real^d}(|f(y)| - g(y))p^\zeta(t,x_0,y)dy = 0.
\]
Since $X_t$ is a non-degenerate diffusion, for a fixed $x_0 \in [0,1)^d$, $p^\zeta(t,x_0,y) >0$ for all $y \in \real^d$, $t \geq 0$. Thus, there exists a continuous $\bbZ^d$ periodic function $h$ such that $f(y) = e^{ih(y)}g(y)$ for all $y \in \real^d$. Therefore,
\[
\EXP_{x}( e^{ih(X_t)}g(X_t)e^{i\theta([X_t] - [x] - \ell(\zeta)t)}) = e^{ih(x)}g(x) = e^{ih(x)}\EXP_{x}( g(X_t)).
\]
Thus, 
\[
\EXP_{x}\Big( g(X_t)\Big[e^{i\big(\theta([X_t] - [x] - \ell(\zeta)t) + h(X_t) - h(x)\big)} - 1\Big]\Big) = 0,
\]
which implies that $\theta([y] - [x] - \ell(\zeta)t) + h(y) - h(x) \in 2\pi \bbZ$, for all $x,y \in \real^d$, $t \geq 0$. This is a contradiction since, taking $y = x+m$ with $m \in \bbZ^d$, we get $\theta (m -\ell(\zeta) t )\in 2\pi \bbZ$ for all $m \in \bbZ^d$, which is impossible. Thus we have shown that for each $\zeta \in \real^d$ with  $\|\zeta\|\leq L$, and each $\theta \in (S^1)^d$, with $ \|\theta\| \geq \theta_0 $, $|\eta(\zeta,
\theta)| < 1$.  Therefore, choosing $\alpha(\theta_0, L) = \sup\{r(Q_\zeta(\theta,1) \;|\; \|\zeta\| \leq L,\;\; \theta \in (S^1)^d, \;\; \|\theta\| \geq \theta_0 \}$, we get the required result.
\end{proof}

\begin{proof}[Proof of Lemma \ref{lem222}]

From Lemma \ref{lem111}, we know that there exists a $\theta_0 > 0$ such that, for all $\|\theta\|\leq \theta_0$ the decomposition \eqref{kii} holds. Therefore, we can write
\[
m_t^{\chi}(g) = J_t^1(\chi) + J_t^2(\chi) + J_t^3(\chi) ,
\] 
where
\[
J_t^1(\chi) :=\]\[= \det(\Xi_\zeta)^{1/2}t^{d/2}\int_{(S^1)^d\cap (\|\theta\|<\theta_0)}\tilde{g}(\theta)e^{-i\theta(z - \ell(\zeta)t - [x])} \lambda(\zeta, \theta)^t\big[\langle \ph_\zeta\ph_\zeta^*, f \rangle +  M_\zeta(\theta, t)f(x)\big]  d\theta
\]
and $J_t^2(\chi)$ and $J_t^3(\chi)$, are defined by
\[
J_t^2(\chi) :=\det(\Xi_\zeta)^{1/2}t^{d/2}\int_{(S^1)^d \cap  (\|\theta\|<\theta_0)}\tilde{g}(\theta)e^{-i\theta(z - \ell(\zeta)t- [x])} N_\zeta(\theta, t)f(x) d\theta,
\]
and
\[
J_t^3(\chi): =\det(\Xi_\zeta)^{1/2}t^{d/2}\int_{(S^1)^d\cap (\|\theta\|\geq \theta_0)}\tilde{g}(\theta)e^{-i\theta(z - \ell(\zeta)t -[x])} Q_\zeta(\theta, t)f(x)d\theta.
\]

We claim that as $t \to \infty$,
\[
\|J_t^1(\chi) - \bar m_t^{\chi}(g) \|  \to 0,
\]
and
\[
\| J_t^3(\chi)\| \to 0, \quad \quad \text{and} \quad \quad \| J_t^3(\chi)\| \to 0,
\]
uniformly over $\chi \in (\bbZ^d, \real^d, \mathcal B_{+,r}, B_0(L))$. The change of variable  $\theta = \frac{s}{\sqrt{ t}}$ gives
\[
J_t^1(\chi) =  \int_{\real^d} k_t(s)e^{-i(\frac{s(z - \ell(\zeta)t  -[x])}{\sqrt{t}})} \big[\langle \ph \ph_\zeta^*, f \rangle  +  M_\zeta(\frac{s}{\sqrt{ t}}, t)f(x)\big]  ds,
\]
where
\[
k_t(s) =\det(\Xi_\zeta)^{1/2} {{\bf{1}}}_{{{(S^1)^d}} \cap (\|\theta\|<\theta_0)}(\frac{s}{\sqrt{ t}})\tilde{g}(\frac{s}{\sqrt{t}})\lambda(\zeta, \frac{s}{\sqrt{ t}})^t.
\] 
On the other hand, we have
\[
\bar m_t^{\chi}(g) = \int_{\real^d} e^{-i\frac{s(z - \ell(\zeta)t-[x])}{\sqrt{t}}}k(s)\langle \ph_\zeta\ph_\zeta^*, f \rangle ds,
\]
where,
\[
k(s) :=\det(\Xi_\zeta)^{1/2}~\tilde{g}(0)e^{-\frac{s^T\Xi_\zeta s}{2}}.
\]
For each $s\in (S^1)^d$ such that $\|\frac{s}{\sqrt{ t}}\| <\theta_0$, from Lemma \ref{lem111}, we have that 
\[
\|M_\zeta(\frac{s}{\sqrt{ t}}, t) f(x)\| \leq q \|f\| \frac{\|s\|}{\sqrt{ t}}.
\]
Hence,
\[
\|J_t^1(\chi) - \bar m_t^{\chi}(g) \| \leq |\langle \ph_\zeta\ph_\zeta^*, f \rangle| \int_{\real^d}  |k_t(s) - k(s)|ds + q\|f\|\int_{\real^d}  |k_t(s)|\frac{\|s\|}{\sqrt{t}}ds.
\]
We observe from (\ref{eqk1}) that the sequence $\{k_t\}_{t\geq 1}$ converges point-wise to $k$. Since the function $g$ has bounded support in $\bbZ^d$,  $\|\tilde g\|_\infty <\infty$. Thus, setting $c_g:= \|\tilde{g}\|_{\infty}$, we have
\[\|k_t(s)\| \leq \det(\Xi_\zeta) c_g e^{-\frac{s^T\Xi_\zeta s}{4}}.
\]	
By defining 
\[
\epsilon_t^1 := \|\ph_\zeta\ph_\zeta^*\| \int_{\real^d}  |k_t(s) - k(s)|ds,~~~~~~  \epsilon_t^{2} :=  q \int_{\real^d}  |k_t(s)|\frac{\|s\|}{\sqrt{ t}}ds,
\]
we get, 
\[
\|J_t^1(\chi) - \bar m_t^{\chi}(g) \| \leq (\epsilon_t^1+ \epsilon_t^{2} )\|f\|.
\]
Using the Lebesgue dominated convergence theorem, $\lim\limits_{t \to \infty} \epsilon_t^1 = \lim\limits_{t \to \infty} \epsilon_t^{2} = 0$, uniform over $\chi \in (\bbZ^d, \real^d, \mathcal B_{+,r}, B_0(L))$. Now it remains to consider the terms $J_t^2(\chi)$ and $J_t^3(\chi)$.
For $ \|\theta\| \leq \theta_0$, we have from Lemma \ref{lem111} that $\|N_\zeta(\theta, t)\| \leq q e^{-\eta t}$, and therefore 
\[
J_t^2(\chi) \leq \det(\Xi_\zeta)^{1/2} t^{d/2}qe^{-\eta t}\|f\|\int_{(S^1)^d \cap  (\|\theta\|<\theta_0)} |\tilde{g}(\theta)| d\theta =: \epsilon_t^{3}\|f\|,
\]
where 
\[
\epsilon_t^{3}:=\det(\Xi_\zeta)^{1/2}t^{d/2}qe^{-\eta t}\int_{(S^1)^d \cap  (\|\theta\|<\theta_0)} |\tilde{g}(\theta)| d\theta .
\]  
It is clear that  $\lim\limits_{t \to \infty} \epsilon_t^{3}= 0$, uniformly over $\chi \in (\bbZ^d, \real^d, \mathcal B_{+,r}, B_0(L))$.  Let $\beta_t = \sup\{\|Q_\zeta(\theta, t)\| : \theta \in (\|\theta\| \geq \theta_0)\cap (S^1)^d,  \|\zeta\|\leq L\}$.  
 From Lemma \ref{lem:eigs}, by choosing 
\[
\alpha(\theta_0,L) = \sup \left\{r(Q_\zeta(\theta, 1)) \Big| \theta \in (S^1)^d, \|\theta\|\geq \theta_0, \|\zeta\|\leq L \right\} <1,
\]
we now have $\beta_t \leq \alpha(\theta_0,L)^t \to 0$ exponentially fast, as $t \to \infty$. Now,
\[
\|J_t^3(\chi)\| \leq\det(\Xi_\zeta)^{1/2} t^{d/2}\|f\| \beta_t \int_{{{(S^1)^d}}}\tilde{g}(\theta) d\theta =:  \epsilon_t^{4}\|f\|,
\]
where 
\[
\epsilon_t^{4} := \det(\Xi_\zeta)^{1/2} t^{d/2} \beta_t \int_{{{(S^1)^d}}}\tilde{g}(\theta) d\theta.
\]
It is clear to see that $\lim\limits_{t \to \infty} \epsilon_t^{4} = 0$, uniformly over $\chi \in (\bbZ^d, \real^d, \mathcal B_{+,r}, B_0(L))$.  Combining these estimates, we conclude that 
\[
\lim\limits_{t \to \infty}\sup\limits_{\chi} \|m_t^{\chi}(g) - m_t^{'\chi}(g)\| = 0.
\]
This concludes the proof of Lemma \ref{lem222}.

\end{proof}

\begin{remark} \label{rem:diff}
As we have mentioned, the above proof of Lemma \ref{lem222} follows very closely the proof of Lemma VI.4 of \cite{HH}.  The difference is that in Lemma VI.4 of \cite{HH} the set $\{\theta \in \real^d\big| r(Q_\zeta(\theta,1))\geq 1 \}$ was required to be $\{0\}$. This condition does not hold in our setting since the operators $Q(\theta,1)e^{i\theta\ell(\zeta)}$ are $2\pi \bbZ^d$ periodic in $\theta\in \real^d$. Instead, we have shown in Lemma \ref{lem:eigs} that $\{\theta \in (S^1)^d\big| r(Q_\zeta(\theta,1))\geq 1 \} = \{0\}$. Another difference is that, in our setting, the operators $Q$ also vary with respect to the additional parameter $\zeta\in \real^d$. 
\end{remark}

		\section{Proof of \thmref{thm:nof}}

		The main idea of the proof is to look at the higher order correlation functions and the corresponding PDEs they solve and then use the asymptotics of the density function obtained in Theorem \ref{thmmain1} and techniques developed in \cite{KorMol} to obtain logarithmic asymptotics of the moments $\EXP(n^{t{\bv}}(t,x)^k)$.

			Recall that $\bar{\bv} =  \ell(0) = \nabla\mu(0)$ is the effective drift of the branching process defined at \eqref{effectivedrift} (also see Lemma \ref{lambda}), and $\Phi(\bar{\bv}) = -\mu(0)$. Without loss of generality, we may assume that $\bar{\bv} = 0$, which simplifies our notation.  Let $B_\delta(y)$ denote a ball of radius $\delta>0$ centered at $y\in \real^d$. For $t > 0$ and $x, y_1, y_2, ... \in \real^d$ with all $y_i$ distinct, define the particle density $\rho_1(t,x,y)$ and the higher order correlation functions $\rho_n(t,x,y_1,...., y_n)$ as the limits of probabilities of finding $n$ distinct particles in $B_\delta(y_1) ,... B_\delta(y_n) $, respectively, divided by the $n$-th power of the volume of  $B_\delta(0) \subset\real^d$. For a fixed $y_1$, the density satisfies 
		\begin{equation}\label{rho1}
			\partial_t\rho_1(t,x,y_1) = \cL_x \rho_1(t,x,y_1), ~~~~~ \rho_1(0,x,y_1) = \delta_{y_1}(x),
		\end{equation}
where $\cL_x$ is the linear operator defined at \eqref{gener0}, acting on the variable $x$. The equations on $\rho_n$ , $n > 1$, are as follows
		\begin{equation}\label{rhon}
			\partial_t\rho_n(t,x,y_1,y_2,...,y_n) = \cL_x \rho_n(t,x,y_1,y_2,...,y_n) + \alpha(x) H_n(t,x,y_1,y_2,...,y_n),
		\end{equation}

		\[ \rho_n(0,x,y_1,y_2,...,y_n) \equiv 0,
		\]
		where 
		\[
		H_n(t,x,y_1,y_2,...,y_n) = \sum\limits_{U \subset Y, U \neq \emptyset} \rho_{|U|} (t, x, U)\rho_{n-|U|}(t, x, Y\setminus U),
		\]
		where $Y = (y_1 , ..., y_n )$, $U$ is a proper non-empty subsequence of $Y$ , and $|U|$ is the number of elements in this subsequence. See Section 2 of \cite{KorMol}, for a derivation of these equations.

Define $m_k^y(t,x) = \int_{\cube_{y}^d}....\int_{\cube_{y}^d} \rho_k(t,x,y_1,y_2,...,y_k) dy_1...dy_k$.  By integrating \eqref{rhon}, it follows that 
		\begin{equation}\label{k1}
			\partial_t m_1^y(t,x) = \cL_xm_1^y(t,x), ~~~~m_1^y(0,x) = \chi_{\cube_{y}^d}(x),
		\end{equation}
		while for $k \geq 2$,
		\begin{equation}\label{k2}
			\partial_t m_k^y(t,x) = \cL_xm_k^y(t,x) + \alpha(x) \sum_{i = 1}^{k-1}\beta_i^km_i^y(t,x)m_{k-i}^y(t,x), ~~~~m_k^y(0,x) \equiv 0,
		\end{equation}
		where $\beta_i^k = k!/(i!(k-i)!)$.	The functions $m_i^{t{\bv}}$ are related to the moments $\EXP(n^{t{\bv}}(t,x)^k)$ according to 
\begin{align}
		\EXP(n^{t{\bv}}(t,x)^k) & = \sum_{i = 1}^{k}S(k,i)\int_{\cube_{t\bv}^d}..\int_{\cube_{t\bv}^d} \rho_i(t,x,y_1,y_2,..,y_i) dy_1...dy_i \nonumber\\
& = \sum_{i = 1}^{k}S(k,i)m_i^{t{\bv}}(t,x),\label{kthmoment}
\end{align}
 where $S(k,i)$ is the Stirling number of the second kind (the number of ways to partition $k$ elements into $i$ nonempty subsets).	As explained in Section 9 of \cite{KorMol}, this follows by partitioning $\mathcal{Q}^d_{t{\bv}}$ into small subdomains, and taking a limit as their diameters shrink uniformly to zero.

		\subsection{Proof of part (a).}  We first proof part (a) of Theorem \ref{thm:nof}.  We will use induction to show the following:	
		\begin{itemize}
			\item[(i)] For each $k \geq 1$, there exists a constant $a_k>0$ such that
			\begin{equation}\label{aronsonk}
				m^y_k(t,x) \leq a_k \exp\left(a_k t  - \frac{\|y-x\|^2}{a_k(t+1)}\right)
			\end{equation}	 
			for all $(t,x,y) \in \real^+ \times \real^d\times \real^d$.
			\item[(ii)]For each $k \geq 1$, for each $L>0$, the following two limits exist uniformly for  $\bv \in \real^d$, with $\|\bv\| \leq L$ and for $x \in [0,1)^d$, and satisfy
			\begin{equation}\label{limeq}
				\gamma_k(\bv) =	 \lim\limits_{t \to \infty} \frac{\ln m_k^{t\bv}(t,x)}{t} = \lim\limits_{t \to \infty} \frac{\ln\EXP(n^{t{\bv}}(t,x)^k)}{t}.
			\end{equation}
			Moreover, $\gamma_k:\real^d \to \real$ is continuous for all $k \in \naturals$. 
			\item[(iii)] For each $L>0$, there exists $M_0 = M_0(L,k)$ such that, for all $M\geq M_0$,
			\begin{equation}\label{oopp}
				\gamma_k(\bv) = \sup\limits_{\|w -  \bv\|\leq M,\\ u \in (0,1)} \Big[ u \gamma_{k-1}\Big(\frac{\bv - w}{u}\Big) +  u \gamma_{1}\Big(\frac{\bv - w}{u}
				\Big) + (1-u)\gamma_1\Big(\frac{ w}{1-u}\Big)\Big],
			\end{equation}
			when $\|\bv\|\leq L$, $k\geq 2$. In addition, $\gamma_k(\bv) \geq \gamma_{k-1}(\bv)$ for $k\geq 2$.
		\end{itemize}

Starting with $k = 1$, we estimate 
\begin{equation}
m_1^y(t,x) = \int_{\cube_{y}^d}\rho_1(t,x,z)\,dz. \label{m1integral}
\end{equation}
By Lemma \ref{aronlem}, we know that there is $c > 0$, such that 
\begin{equation}
\rho_1(t,x,y ) \leq 	ct^{-d/2}\exp\left((-\Phi(\bar {\bv}))t - \frac{\| (y - x) -  \bar{\bv}t \|^2}{ct}\right), \quad x,y \in \real^d, \;\;t \geq 0. \label{rho}
\end{equation}
In view of \eqref{m1integral} and the inequality $-\|y - x - \bar \bv\|^2 \leq - \frac{1}{2}\|y - x\|^2 + \frac{1}{2} \|\bar \bv\|^2 t^2$, this implies
	\begin{equation}\label{aronsonm1}
			m_1^y(t,x) \leq a_1 \exp\left(a_1t - \frac{\|y-x\|^2}{a_1(t+1)}\right), \quad x,y \in \real^d, \;\;t \geq 0
		\end{equation}
holds for some constant $a_1 > 0$. 	This proves (i) for $k = 1$.

Now suppose that (i) holds up-to $k-1$. From $(\ref{k2})$ and the Duhamel's Formula, we see that 
		\begin{equation}\label{exactmk}
			m_k^y(t,x) =\int_0^t \int_{\real^d} \alpha(z) \sum_{i = 1}^{k-1}\beta_i^km_i^y(s,z)m_{k-i}^y(s,z) \rho_1(t-s,x,z)dz ds.
		\end{equation}
		Note that, since $\eqref{aronsonk}$ holds up-to $k-1$, it also holds for $\sum_{i = 1}^{k-1}\beta_i^km_i^y(s,z)m_{k-i}^y(s,z)$ (with a different constant $\tilde{a}_{k-1}$).
		Thus there exists a constant $a_k>0$ such that $m^y_k(t,x) \leq a_k \exp\left(a_k t  - \frac{\|y-x\|^2}{a_k(t+1)}\right)$, since the convolution of two functions satisfying the estimate \eqref{aronsonk}, with two different constants also satisfies \eqref{aronsonk}. That is, (i) holds for $k$, as well.

		We next show that (ii) holds for $k = 1$ and $k = 2$, and (iii) holds for $k = 2$. Here is where we will need the sharp estimate for $\rho_1$, provided by Theorem \ref{thmmain1}: for any fixed $L>0$ and for all $x,y \in \real^d$ with $\|x-y\|\leq Lt$, we have  
\begin{equation}\label{formm1} 
\rho_1(t,x,z) = (\sqrt{2\pi t})^{-d}\ph_{0}(x)	\text{det}[ D^2 \Phi(\frac{z-x}{t}) ]^{1/2}e^{-t\Phi(\frac{z-x}{t})} \ph^*_{0}(z)\left[1 +o_L(1)\right],
		\end{equation}
where $\Phi$, $\ph_{0}$ and $\ph_{0}^*$ are defined before Theorem \ref{thmmain1}.  From  \eqref{formm1} and \eqref{m1integral}, we obtain 
		\begin{equation}\label{m1asy}
			\gamma_1(\bv) = \lim\limits_{t \to \infty}\frac{\ln m_1^{t\bv}(t,x)}{t} = -\Phi(\bv),
		\end{equation}
		and $\gamma_1$ is continuous since $\Phi$ is continuous.
		In addition, from \eqref{kthmoment}, for each $t >0$, $\EXP(n^{t\bv}(t,x)) = m_1^{t\bv}(t,x)$. Thus (ii) holds for $k = 1$. \\
		\\
		Next we show that, for $k = 2$, the first limit on the right hand side of \eqref{limeq} exists and satisfies formula \eqref{oopp}. In the arguments below, we treat $x$ and $\bv$ as fixed, but all the estimates are easily seen to be uniform in $\|\bv\|\leq L$ and $x \in [0,1)^d$.
		Let us recall that 
		\[
		m_2^y(t,x) =  \int_0^t \int_{\real^d} 2\alpha(z)(m_1^y(s,z))^2 \rho_1(t-s,x,z)dz ds.
		\]
We will apply Laplace's method to estimate the integral.  For $0< \eps<\kappa<1$ and $M > 0$, consider the following partition of the domain $[0,t] \times \real^d$:
\begin{equation}
[0,t] \times \real^d = R_1 \cup R_2 \cup R_3 \cup R_4 \cup R_5 \cup R_6 \label{domainPartition1}
\end{equation}
with
\begin{align}
R_1 & = [0,\epsilon t) \times \{\|z - t\bv\| > \eps^{1/4}t \}  \no \\
R_2 & = [0,\epsilon t) \times \{\|z - t\bv\|\leq \eps^{1/4}t \}  \no \\
R_3 & = [\epsilon t, \kappa t] \times \{\|z - t\bv\| > M t \}  \label{domainPartition2} \\
R_4 & = [\epsilon t, \kappa t] \times \{\|z - t\bv\| \leq M t \}  \no \\
R_5 & = (\kappa t, t] \times \{\|z - t\bv\| > (1 - \kappa)^{1/4} t \} \no \\
R_6 & = (\kappa t, t] \times \{\|z - t\bv\| \leq (1 - \kappa)^{1/4} t \}. \no
\end{align}
Then we write
		\[
		m_2^y(t,x) =  \sum_{j=1}^6 I_j, \quad \quad I_j = \int \!\! \int_{R_j} 2\alpha(z)(m_1^y(s,z))^2 \rho_1(t-s,x,z)dz ds.
\]
Using \eqref{aronsonm1} in the region $R_1$, where $s < \eps t$ and $\|\bv t - z\| > \eps^{1/4}t$, we see that
		\[
		( m_1^{\bv t}(s,z))^2 \leq a_1^2\exp(2(a_1\eps- \frac{\sqrt{\eps}}{a_1(\eps +\frac{1}{t})})t), \quad (s,z) \in R_1,
		\] 
		which can be made exponentially small (as $t \to \infty$), with an arbitrarily large negative exponent, by choosing $\eps$ small enough. Therefore, using the estimate on $\rho_1$ from \eqref{rho}, we infer that for each $r>0$, for all sufficiently small $\eps>0$, 
		\[
		\limsup\limits_{t \to \infty} \frac{\ln I_1(t,x,t\bv)}{t} \leq  -r.
		\]
		Similarly, considering the integral $I_5$ over the region $R_5$, we may exchange the roles of $(m_1^{t\bv}(s,z))^2$ and $\rho_1(t-s,z,x)$, to obtain for each $r>0$, for all $\kappa \in (0,1)$ sufficiently close to $1$,
		\[
		\limsup\limits_{t \to \infty} \frac{\ln I_5(t,x,t\bv)}{t} \leq  -r.
		\]
		In the region $R_2$, where $s < \eps t, \|\bv t - z\| < \eps^{1/4}t$, using \eqref{aronsonm1}, we conclude that there exists a $C_1>0$ such that 
		\[
		( m_1^{\bv t}(s,z))^2  \leq C_1 e^{C_1\eps t}, \quad \forall \;(s,z) \in R_2.
		\] 
		Moreover, by \thmref{thmmain1}, there exists $C_2>0$ such that 
\[
\rho_1(t-s, x,z)\leq C_2(t-s)^{-d/2} e^{-(t-s)\Phi(\frac{z-x}{t-s})}, \quad \forall \;(s,z) \in R_2.
\]
 By choosing $\eps>0$ small enough, and choosing sufficiently large $t$, the value of $-\Phi(\frac{z-x}{t-s})$ in this region $R_2$ can be made arbitrarily close to $\gamma_1(\bv)$. Thus, for each $\delta>0$, for all sufficiently small $\eps>0$,
		\[
		\limsup\limits_{t \to \infty} \frac{\ln I_2(t,x,t\bv)}{t} \leq \gamma_1(\bv) + \delta.
		\]
		Similarly, considering the integral $I_6$ over the region $R_6$, we may exchange the roles of $(m_1^{t\bv}(s,z))^2$ and $\rho_1(t-s,z,x)$ to obtain for each $\delta>0$, for $\kappa \in (0,1)$ sufficiently close to $1$,
		\[
		\limsup\limits_{t \to \infty} \frac{\ln I_6(t,x,t\bv)}{t} \leq  2\gamma_1(\bv) + \delta.
		\]
		Now let us assume $1>\kappa>\eps>0$ are fixed. Consider the integral over the outer region $R_3$. From \eqref{aronsonm1},  it follows that, given $r>0$, we can choose $M$ large enough such that
		\[
		\limsup\limits_{t \to \infty} \frac{\ln I_3(t,x, t\bv)}{t} < -r.
		\]
		Let us now examine the asymptotics of $I_4 = I_4(t,x, t\bv)$, the integral over $R_4$:
\[
I_4(t,x, t\bv) = \int_{\epsilon t}^{\kappa t} \int_{\|z - t \bv\| \leq M t} 2\alpha(z)(m_1^{\bv t}(s,z))^2 \rho_1(t-s,x,z)dz ds
\] 
Changing variables $s/t = u$ and $z/t = w$, this is equivalent to
\[
I_4(t,x, t\bv) = t^{ d + 1}\int_{\epsilon }^{\kappa } \int_{\|w - \bv\| \leq M } 2\alpha(wt)(m_1^{\bv t}(ut,wt))^2 \rho_1(t (1 - u),x,tw )dw du
\] 
The asymptotic behavior of $m_1$ and $\rho_1$ is available in $\eqref{m1asy}$ and Theorem $\ref{thmmain1}$. Observe that $\alpha(x)$ is periodic, non-negative and not identically $0$. Therefore, following Laplace's method, we have as $t \to \infty$,
\begin{align}
\frac{1}{t} \ln I_4(t,x, t\bv) & \sim \frac{1}{t} \ln \left( \int_{\epsilon }^{\kappa } \int_{\|w - \bv\| \leq M } \left(e^{- t u \Phi(\frac{\bv - w}{u}) } \right)^2 e^{- t (1 - u) \Phi(\frac{w - (x/t)}{1 -u})}  dw du \right) \\
& \sim   \sup\limits_{\|w-\bv\|\leq M, u \in (\eps,\kappa)} \Big[ 2u\gamma_{1}\Big(\frac{\bv - w}{u}
		\Big) + (1-u)\gamma_1\Big(\frac{ w}{1-u}\Big)\Big].
\end{align}
Combining this with the estimates on $I_1$, $I_2$, $I_3$, $I_5$, $I_6$, we obtain that the first limit in \eqref{limeq} exists for $k=2$, and is given by formula \eqref{oopp}:
		\begin{equation} \label{gamma2form}
		\gamma_2(\bv) := \lim\limits_{t \to \infty}\frac{\ln(m_2^{t\bv}(t,x))}{t} = \sup\limits_{\|w -  \bv\|\leq M,\\ u \in (0,1)} \Big[ 2u\gamma_{1}\Big(\frac{\bv - w}{u}
			\Big) + (1-u)\gamma_1\Big(\frac{ w}{1-u}\Big)\Big].
		\end{equation}
From the formula above, since $\gamma_1(v)$ is continuous, we conclude that $\gamma_2$ is also continuous. 

Next we show that $\gamma_2(\bv) \geq \gamma_1(\bv)$ for all $\bv \in \real^d$. This is complete the proof that (iii) holds for $k=2$. In view of \eqref{kthmoment}, this also implies that, for $k=2$, the second limit in \eqref{limeq} exists and is equal to $ \gamma_2(\bv)$. Recall that $m_1^y(t,x)$ and $m_2^y(t,x)$ solve the following PDEs:
		\begin{equation}\label{m1eq}
			\partial_t m_1^y(t,x) = \cL_xm_1^y(t,x), ~~~~m_1^y(0,x) = \chi_{\cube_{y}^d}(x),
		\end{equation}
		\begin{equation}\label{m2eq}
			\partial_t m_2^y(t,x) = \cL_xm_2^y(t,x) + \alpha(x)(m_1^y(t,x))^2, ~~~~m_2^y(0,x) \equiv 0.
		\end{equation}
		We will show that there exists a $C_L>0$ such that, for each $t\geq 1$  and $x,y\in \real^d$ with $\|x-y\|\leq Lt$, we have 
		\[
		m_2^y(t,x) \geq C_L m_1^y(t,x).
		\]
		Fix $R>0$ such that $[0,1)^d \in B_R(0)$. Observe that, since $m_1^y(0,x) = \chi_{\cube_{y}^d}(x)$, there exists a $\delta_1>0$ such that
		\[
		m_1^y(t,x) \geq \delta_1\chi_{B_R(y)}(x) ~~~~~\text{for all}~~~~ t \in [1/8,1/4].
		\]
		Also observe that there exists a $\delta_2>0$ such that, for all $x,y \in \real^d$ with $\|x-y\|\leq 2R$ and  $t \in [1/4,1/2]$,
		\[
		\rho_1(t,x,y) \geq \delta_2.
		\]
		Now, observe that $\alpha(x)$ is periodic, non-negative and not identically $0$. Thus, from \eqref{m2eq}, using Duhamel's Formula, for $x \in \cube_{y}^d$, 
		\begin{align*}
			m_2^y(1/2,x) = & \int_0^{1/2} \int_{\real^d}2 \alpha(z)(m_1^y(s,z))^2 \rho_1(\frac{1}{2}-s,x,z)dz ds\\ \geq& \int_{1/8}^{1/4} \int_{B_R(y)}2\alpha(z)  \delta_1^2  \delta_2  dz ds\\\geq &  \frac{1}{4}\delta_1^2  \delta_2 \int_{[0,1)^d}\alpha(z)   dz:= \delta_3 >0,
		\end{align*}
		that is, 
		\begin{equation}\label{m2ini}
			m_2^y(1/2,x)\geq \delta_3\chi_{\cube_{y}^d}(x).
		\end{equation}
		Now, comparing the PDEs \eqref{m1eq} and \eqref{m2eq}, and taking into account \eqref{m2ini}, we see that for all $t\geq 0$, $x,y \in \real^d$,
		\begin{equation}\label{m1m2}
			m_2^y(t+1/2,x) \geq \delta_3 m_1^y(t,x).
		\end{equation}
		For a fixed $L>0$, for all $x, y \in \real^d$ with $\frac{\|x-y\|}{t}\leq L$, $t \geq 1/2$, from \thmref{thmmain1}, there exists $c>0$ such that
		\begin{equation}\label{ki}
			m_1^y(t,x)\geq   c  m_1^y(t+1/2,x).
		\end{equation}
		From \eqref{ki} and \eqref{m1m2}, we conclude that there exists a constant $C_L>0$ such that 
		\begin{equation}
			m_2^y(t,x) \geq C_L m_1^y(t,x),
		\end{equation}
		for all $x, y \in \real^d$ with $\frac{\|x-y\|}{t}\leq L$ and $t \geq 1$. In particular, for each $\bv \in \real^d$, we have
		\begin{align}
			\gamma_1(\bv) &=  \lim_{t \to \infty}\frac{\ln m_{1}^{t\bv}(t,x)}{t}  \leq 	\lim_{t \to \infty}\frac{\ln m_{2}^{\bv}(t,x)}{t} = \gamma_2(\bv) 
\end{align}
Because $\EXP(n^{t{\bv}}(t,x)^2)$ is a linear combination of $m_{1}^{t\bv}$ and $m_{2}^{t\bv}$ (by \eqref{kthmoment}) this also implies
\begin{align}
 \lim_{t \to \infty}\frac{\ln \EXP(n^{t{\bv}}(t,x)^2)}{t} = \gamma_2(\bv),    
	\end{align}	
Thus (ii) and (iii) hold for $k = 2$. This completes the basis for induction.

		Next, suppose that (ii) and (iii) hold up to $k-1$ with $k\geq 3$: we will now show that (ii) and (iii) must also hold for $k$, completing the induction. From \eqref{exactmk}, there exists a constant $C_1>0$ such that 
		\[
		m_k^y(t,x)\geq C_1 \int_0^t \int_{\real^d} \alpha(z) m_1^y(s,z)m_{k-1}^y(s,z) \rho_1(t-s,x,z)dz ds =:C_1 I^\ell(t, x,y).
		\]
		Since $\EXP(n^{t{\bv}}(t,x)^k)$ is a convex function of $k$, for each $1 \leq i \leq k-1$,
		\[
		\EXP(n^{t{\bv}}(t,x)^{k-1})\EXP(n^{t{\bv}}(t,x))\geq \EXP(n^{t{\bv}}(t,x)^{k-i})\EXP(n^{t{\bv}}(t,x)^i).
		\]
		Thus, using \eqref{kthmoment}, there exists a constant $C_2 >0$ such that,
		\[
		m_k^y(t,x) \leq C_2  \int_0^t \int_{\real^d}  \alpha(z)\EXP(n^{t{\bv}}(t,x)^{k-1})\EXP(n^{t{\bv}}(t,x))\rho_1(t-s,x,z) dz ds
		\]
		\[
		=: C_2 I^u(t, x,y).
		\]
		In order to prove that the first limit on the right hand side of \eqref{limeq} exists, we need to show that,
		\begin{equation}\label{i1i2}
			\lim\limits_{t \to \infty} \frac{\ln I^\ell(t,x, t\bv)}{t} =  \lim\limits_{t \to \infty} \frac{\ln I^u(t,x, t\bv)}{t}.
		\end{equation}
		We claim that, for all sufficiently large  $M>0$,
		\begin{equation*}
			\gamma_k(\bv) :=  \lim\limits_{t \to \infty}\frac{\ln(I^\ell(t,x,t\bv))}{t} =\lim\limits_{t \to \infty}\frac{\ln(I^u(t,x,t\bv))}{t} = \lim\limits_{t \to \infty}\frac{m_k^{t\bv}(t,x)}{t} 
		\end{equation*}
		\begin{equation}\label{I1}
			= \sup_{\substack{\|w -  \bv\|\leq M\\ u \in (0,1)}} \Big[ u \gamma_{k-1}\Big(\frac{\bv - w}{u}\Big) +  u \gamma_{1}\Big(\frac{\bv - w}{u}
			\Big) + (1-u)\gamma_1\Big(\frac{ w}{1-u}\Big)\Big].
		\end{equation}

		As before, let $0< \eps<\kappa<1$, and partition the domain according to \eqref{domainPartition1}-\eqref{domainPartition2}, and define the integrals
		\[
		I^\ell_j(t,x,t\bv) := \int \!\! \int_{R_j} \alpha(z) m_1^{\bv t}(s,z)m_{k-1}^{\bv t}(s,z)\rho_1(t-s,x,z)dz ds, \quad j=1,\dots,6.
		\] 
so that $I^\ell(t,x,t\bv) = \sum_{j=1}^6 I^\ell_j(t,x,t\bv)$. Using the same arguments as above, it is not  difficult to show that, for each $r>0$, for each $\delta>0$, for all sufficiently small $\eps>0$, for all $\kappa \in (0,1)$ sufficiently close to~$1$,  for all sufficiently large $M$, 
		\[
		\limsup\limits_{t \to \infty} \frac{\ln I^\ell_1(t,x,t\bv)}{t} \leq  -r,
		\]
		\[
		\limsup\limits_{t \to \infty} \frac{\ln I^\ell_5(t,x,t\bv)}{t} \leq  -r,
		\]
		\[
		\limsup\limits_{t \to \infty} \frac{\ln I^\ell_2(t,x,t\bv)}{t} \leq \gamma_1(\bv) + \delta,
		\]
		\[
		\limsup\limits_{t \to \infty} \frac{\ln I^\ell_6(t,x,t\bv)}{t} \leq  \gamma_1(\bv) + \gamma_{k-1}(\bv) + \delta,
		\]
		\[
		\limsup\limits_{t \to \infty} \frac{\ln I^\ell_3(t,x, t\bv)}{t} < -r.
		\]
Now consider integral $I^\ell_4(t,x, t\bv){t}$.  Changing variables $s/t = u$ and $z/t = w$, as before, this is equivalent to
\[
I^\ell_4(t,x, t\bv){t} = t^{ d + 1}\int_{\epsilon }^{\kappa } \int_{\|w - \bv\| \leq M } \alpha(wt) m_1^{\bv t}(ut,wt)m_{k-1}^{\bv t}(ut,wt)\rho_1(t(1 - u),x,wt)dw du.
\] 
The logarithmic asymptotics of $m_{1}$, $m_{k-1}$, and $\rho_1$ are given by \eqref{limeq} and Theorem~\ref{thmmain1}.  Therefore, following Laplace's method, as $t \to \infty$, $\frac{1}{t} \ln I^\ell_4(t,x, t\bv){t}$ is asymptotic to  
\begin{align}
& \frac{1}{t} \ln \left( \int_{\epsilon }^{\kappa } \int_{\|w - \bv\| \leq M } \alpha(wt) m_1^{\bv t}(ut,wt)m_{k-1}^{\bv t}(ut,wt)\rho_1(t(1 - u),x,wt)dw du \right) \no \\
& \quad \sim \frac{1}{t} \ln \left( \int_{\epsilon }^{\kappa } \int_{\|w - \bv\| \leq M } e^{t u \gamma_1( \frac{\bv - w}{u})}e^{t u \gamma_{k-1}( \frac{\bv - w}{u})} e^{t (1 - u) \gamma_1( \frac{ w - (x/t)}{1-u})} dw du \right).
\end{align}
Therefore,
		\[
		\lim_{t \to \infty} \frac{\ln I^\ell_4(t,x, t\bv)}{t}  =  \sup_{\substack{\|w-\bv\|\leq M\\ u \in (\eps,\kappa)}} \Big[ u\gamma_{1}\Big(\frac{\bv - w}{u}
		\Big)+u\gamma_{k-1}\Big(\frac{\bv - w}{u}
		\Big) + (1-u)\gamma_1\Big(\frac{ w}{1-u}\Big)\Big].
		\]
		Combining these estimates, we conclude that
\begin{equation}\label{formgam}
\lim_{t \to \infty} \frac{\ln(I^\ell(t,0,t\bv))}{t} = \sup_{\substack{\|w -  \bv\|\leq M\\ u \in (0,1)}} \Big[ u \gamma_{k-1}\Big(\frac{\bv - w}{u}\Big) +  u \gamma_{1}\Big(\frac{\bv - w}{u}\Big) + (1-u)\gamma_1\Big(\frac{ w}{1-u}\Big)\Big].
		\end{equation}

Now, we justify \eqref{i1i2}, that is, the logarithmic asymptotics of the integrals $I^\ell$ and $I^u$ are equal. The difference between $I^\ell$ and $I^u$ is that $\EXP(n^{t\bv}(t,x)^i)$ in $I^u$ replaces $m_i^{t\bv}(t,x)$ in $I^\ell$. The properties of $m_i^{t\bv}(t,x)$ that were used to derive the asymptotics of $I_1$ included estimate $\eqref{aronsonk}$ and the uniform asymptotics of the logarithm (formula $\eqref{limeq}$). By the inductive assumption, the same uniform asymptotics holds for $\EXP(n^{t\bv}(t,x)^i)$ for $i \leq k-1$. Moreover, by formula \eqref{kthmoment}, the analogue of $\eqref{aronsonk}$ holds for $\EXP(n^{t\bv}(t,x)^i)$. That is, there exist constants $d_i>0$ such that
		\begin{equation}\label{aronsonexpk}
			\EXP(n^{t{\bv}}(t,x)^{i})  \leq d_i \exp\left(d_i t  - \frac{\|y-x\|^2}{d_i(t+1)}\right)
		\end{equation}	 
		for all $(t,x,y) \in \real^+ \times \real^d\times \real^d$, for all $1 \leq i \leq k-1$. Therefore, the logarithmic asymptotics of $I_2$ are the same as that of $I_1$ i.e., \eqref{i1i2} holds. From \eqref{kthmoment},  
		\[
		\liminf\limits_{t \to \infty} \frac{\ln\EXP(n^{t{\bv}}(t,x)^{k})}{t} \geq \lim\limits_{t \to \infty} \frac{\ln m_k^{t\bv}(t,x)}{t}.\]
		From the formula \eqref{I1} which now holds for $k$ and $k-1$ and the inductive hypothesis that $\gamma_{k-1}(v) \geq \gamma_{k-2}(v)$ for each $v \in \real^d$, we observe that that 
		\[\gamma_{k}(\bv) =  \sup\limits_{\|w -  \bv\|\leq M,\\ u \in (0,1)} \Big[ u \gamma_{k-1}\Big(\frac{\bv - w}{u}\Big) +  u \gamma_{1}\Big(\frac{\bv - w}{u}
		\Big) + (1-u)\gamma_1\Big(\frac{ w}{1-u}\Big)\Big] \]\[\geq \sup\limits_{\|w -  \bv\|\leq M,\\ u \in (0,1)} \Big[ u \gamma_{k-2}\Big(\frac{\bv - w}{u}\Big) +  u \gamma_{1}\Big(\frac{\bv - w}{u}
		\Big) + (1-u)\gamma_1\Big(\frac{ w}{1-u}\Big)\Big] = \gamma_{k-1}(\bv).
		\]
		This, along with the inductive hypothesis that $\gamma_{i-1}(\bv) \leq \gamma_i(\bv)$ for each $2 \leq i \leq k-1$, by \eqref{kthmoment}, implies that
		\[
		\limsup\limits_{t \to \infty} \frac{\ln\EXP(n^{t{\bv}}(t,x)^{k})}{t} \leq \lim\limits_{t \to \infty} \frac{\ln m_k^{t\bv}(t,x)}{t}.\] 
		Therefore both the limits in \eqref{limeq} exist and are equal. \\
		
		Form the inductive assumption that $\gamma_i$ is a continuous function for $1 \leq i \leq k-1$,  using formula  \eqref{formgam}, we conclude that $\gamma_k$ is continuous. This concludes the proof of (i)-(iii) through induction.

		We have shown that, for all sufficiently large $M>0$, 
		\[
		\gamma_k(\bv)  = \sup\limits_{\|w -  \bv\|\leq M,\\ u \in (0,1)} \Big[ u \gamma_{k-1}\Big(\frac{\bv - w}{u}\Big) +  u \gamma_{1}\Big(\frac{\bv - w}{u}
		\Big) + (1-u)\gamma_1\Big(\frac{ w}{1-u}\Big)\Big].
		\]
		Therefore, letting $M \to \infty$, we obtain the formula
		\[
		\gamma_k(\bv) = \sup\limits_{w \in \real^d,\\ u \in (0,1)} \Big[ u \gamma_{k-1}\Big(\frac{\bv - w}{u}\Big) +  u \gamma_{1}\Big(\frac{\bv - w}{u}
		\Big) + (1-u)\gamma_1\Big(\frac{ w}{1-u}\Big)\Big], \quad k \in \naturals.
		\]
This completes the proof of (a) in Theorem \ref{thm:nof}.

		\subsection{Proof of part (b)}
	
Using H\"older's inequality, it is easily seen that $\ln\EXP(n^{t{\bv}}(t,x)^k)$ is a convex function of $k$ for each fixed $t \in \real^+, \bv \in \real^d$.  In addition, $\gamma_0 \equiv 0$ and therefore $\gamma_k(\bv) /k$ is a non-decreasing function of $k$, which implies that, if $\gamma_k(\bv) > k\gamma_1(\bv)$, then $\gamma_{k+1}(\bv) > (k+1)\gamma_1(\bv)$. Therefore, $G_{k+1} \subseteq G_k$ must hold for each $k \in \naturals$.  We will complete the proof of Theorem \ref{thm:nof} by showing that there exists a sequence of  constants $\alpha_k > 0$ 
such that $B_{\alpha_k}(0) \subseteq G_k$ and that $\bigcap\limits_{k \in \naturals} G_k = \{0\}$.

		Observe that, for each $k \in \naturals$, $\bv \in \real^d$, $\gamma_k(\bv) \leq k \gamma_1(0)$. To justify this, we use induction. For $k = 1$, the statement is obvious since $\gamma_1$ achieves its maximum at $0$. Now suppose the statement holds up to $k-1$. Then, from the definition of $\gamma_k$,
		\begin{equation}\label{obvio}
			\sup\limits_{w \in \real^d,\\ u \in (0,1)} \Big[ u \gamma_{k-1}\Big(\frac{\bv - w}{u}\Big) +  u \gamma_{1}\Big(\frac{\bv - w}{u}
			\Big) + (1-u)\gamma_1\Big(\frac{ w}{1-u}\Big)\Big] \leq 
		\end{equation} 
		\[\leq \Big[ u (k-1)\gamma_1(0) +  u \gamma_{1}(0) + (1-u)\gamma_1(0)\Big] = k\gamma_1(0).
		\]
		We know that  $-\Phi(0) = \gamma_1(0) = \mu(0)>0$, and $\Phi$ is continuous, therefore, the region $G_1$ is non-empty. 
		 Since the function $\bv \mapsto \gamma_k(\bv)$ is continuous for each $k \geq 1$, 
		 the sets $G_k$ must be closed subsets of $\real^d$.

		Next let us show that each set $G_k$ contains a small ball centered at the origin. As a first step, the following lemma establishes an important property of the functions~$\gamma_k$.
		
		\begin{lemma}\label{gammakdecay}
			For each $k \geq 1$, $\bv \in \real^d$ and $\alpha\in [0,1]$, $\gamma_k(\bv) \leq \gamma_k(\alpha \bv)$.	
		\end{lemma}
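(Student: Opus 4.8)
The plan is to prove the inequality by induction on $k$, using the variational formula for $\gamma_k$ established in part (a) together with a simple rescaling of the optimization variables. Recall that throughout this section $\bar{\bv}=0$, so $\Phi$ attains its (unique, strict) minimum at the origin.

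First I would dispose of the base case $k=1$. Here $\gamma_1(\bv)=-\Phi(\bv)$. Writing $\alpha\bv=(1-\alpha)\cdot 0+\alpha\bv$ and using convexity of $\Phi$ together with $\Phi(0)=\min_{v}\Phi(v)\le\Phi(\bv)$,
\[
\Phi(\alpha\bv)\le(1-\alpha)\Phi(0)+\alpha\Phi(\bv)\le(1-\alpha)\Phi(\bv)+\alpha\Phi(\bv)=\Phi(\bv),
\]
so $\gamma_1(\bv)=-\Phi(\bv)\le-\Phi(\alpha\bv)=\gamma_1(\alpha\bv)$, which is the claim for $k=1$. The case $k$ corresponding to $\gamma_0\equiv 0$ is trivial as well.

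For the inductive step, assume the inequality holds for $\gamma_1,\dots,\gamma_{k-1}$ with $k\ge 2$. Fix $\bv\in\real^d$ and $\alpha\in[0,1]$. The key observation is that the substitution $(w,u)\mapsto(\alpha w,u)$ sends the arguments appearing in the bracket for $\gamma_k(\bv)$ to $\alpha$ times those appearing in the bracket for $\gamma_k(\alpha\bv)$: indeed $\frac{\alpha\bv-\alpha w}{u}=\alpha\,\frac{\bv-w}{u}$ and $\frac{\alpha w}{1-u}=\alpha\,\frac{w}{1-u}$. Applying the inductive hypothesis to each of $\gamma_{k-1}$ and $\gamma_1$,
\[
u\gamma_{k-1}\!\Big(\tfrac{\alpha\bv-\alpha w}{u}\Big)+u\gamma_{1}\!\Big(\tfrac{\alpha\bv-\alpha w}{u}\Big)+(1-u)\gamma_{1}\!\Big(\tfrac{\alpha w}{1-u}\Big)\ \ge\ u\gamma_{k-1}\!\Big(\tfrac{\bv-w}{u}\Big)+u\gamma_{1}\!\Big(\tfrac{\bv-w}{u}\Big)+(1-u)\gamma_{1}\!\Big(\tfrac{w}{1-u}\Big).
\]
The left-hand side is one of the competitors in the supremum defining $\gamma_k(\alpha\bv)$ (namely the pair $(\alpha w,u)$), hence is $\le\gamma_k(\alpha\bv)$. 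Taking the supremum of the right-hand side over all $(w,u)\in\real^d\times(0,1)$ gives $\gamma_k(\bv)\le\gamma_k(\alpha\bv)$, completing the induction.

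I do not expect a serious obstacle here: the inductive step is essentially a one-line substitution in the variational formula, and the only delicate ingredient is the base case, where the desired monotonicity along rays pointing toward the origin is precisely what convexity of $\Phi$ (with its minimizer normalized to $0$) provides. One should merely record that $\gamma_k(\bv)\in\real$ for all $\bv$, as established in part (a), so that all the suprema are finite and the comparison above is legitimate.
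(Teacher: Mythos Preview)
Your proof is correct and follows essentially the same approach as the paper: induction on $k$, with the base case coming from strict convexity of $\Phi$ minimized at $0$, and the inductive step obtained by the substitution $w\mapsto\alpha w$ in the variational formula for $\gamma_k$. The paper phrases the inductive step by starting from the expression for $\gamma_k(\alpha\bv)$ and substituting $w=\alpha z$, but this is exactly your argument written in the reverse direction.
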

		\proof
		We use induction for this proof. For $k = 1$, the statement of the lemma holds since $ \gamma_1(\bv)$ is a twice differentiable strictly concave function and $\bar{\bv} = 0$ is its maximizer.
		
		Suppose the statement of the lemma holds for each $1\leq i \leq k-1$. To show this for $k$, we have, for $0 \leq \alpha \leq 1$,
		\[
		\gamma_k(\alpha\bv)
		= \sup\limits_{w \in \real^d, u \in (0,1)} \Big[ u \gamma_{k-1}\Big(\frac{\alpha\bv - w}{u}\Big) +  u \gamma_{1}\Big(\frac{\alpha\bv - w}{u}
		\Big) + (1-u)\gamma_1\Big(\frac{ w}{1-u}\Big)\Big].
		\]
		Now, substituting $w = \alpha z$, we have
		\[
		\gamma_k(\alpha\bv) =  \sup\limits_{z \in \real^d, u \in (0,1)} \Big[ u \gamma_{k-1}\Big(\frac{\alpha\bv - \alpha z}{u}\Big) +  u \gamma_{1}\Big(\frac{\alpha\bv - \alpha z}{u}
		\Big) + (1-u)\gamma_1\Big(\frac{\alpha z}{1-u}\Big)\Big]
		\]
		\[
		\geq \sup\limits_{z \in \real^d, u \in (0,1)} \Big[ u \gamma_{k-1}\Big(\frac{\bv - z}{u}\Big) +  u \gamma_{1}\Big(\frac{\bv -  z}{u}
		\Big) + (1-u)\gamma_1\Big(\frac{ z}{1-u}\Big)\Big] = \gamma_k(\bv).
		\]
		\qed
		
		Now, in order to prove that each set $G_k = \{\bv \in \real^d\Big| \gamma_k(\bv) = k\gamma_1(\bv), \gamma_1(\bv)\geq 0\}$ contains a small ball centered at the origin, we introduce functions $f_k$ defined below. For each $k \geq 2$, we will first show that there is a small ball centered around the origin on which $f_k(\bv) = k\gamma_1(\bv)\geq 0$. Then we will use induction to show that there is a (smaller) ball centered around the origin on which $f_k(\bv) = \gamma_k(\bv)$.
		
		Let us define, for $k \geq 2$,$$g^{\bv}_k(w,u) := \Big[ku\gamma_1\big(\frac{\bv-w}{u}\big)+(1-u)\gamma_1\big(\frac{w }{1-u}\big)\Big], ~~~ w \in \real^d, ~~ u \in (0,1),$$ 
		and
		\begin{equation}\label{defoff}
			f_k(\bv) :=  \sup\limits_{w \in \real^d, u \in (0,1)} g^{\bv}_k(w,u).
		\end{equation}
		Observe that $f_2 = \gamma_2$. For  $k>2$, the formula for function $f_k$ is similar to the formula of $\gamma_k$, but with $(\gamma_{k-1} + \gamma_1)$ replaced by $k\gamma_1$. For $w = \bv(1-u)$, we have 
		\[
		g^{\bv}_k(\bv(1-u) ,u) = ku\gamma_1(\bv)+(1-u)\gamma_1(\bv) = (1+(k-1)u)\gamma_1(\bv) \to k\gamma_1(\bv) ~~~\text{as} ~~~u \uparrow 1.
		\]
		Therefore, $f_k(\bv) \geq k\gamma_1(\bv)$ for each $\bv \in \real^d$.
		
		The analysis of $g^{\bv}_k(w,u)$ is detailed in the following three lemmas. They show that,  for each $k \geq 2$, there is a small ball centered around the origin $B_{\beta_k}(0)$, such that, for $\bv \in B_{\beta_k}(0)$, the value of the supremum of $g^{\bv}_k(w,u)$ on $\real^d\times (0,1)$ is $k\gamma_1(\bv)$ which, as shown above, can be nearly achieved when $w$ is close to $0$ and $u$ is close to $1$.
		
		The first of the three lemmas, Lemma \ref{small-lem1}, shows that the value of the supremum of $g^{\bv}_k(w,u)$ over the region where $w$ is bounded and $u$ is close to $1$ is $k\gamma_1(\bv)$.
		\begin{lemma}\label{small-lem1} There exist constants  $L>0$, $\delta, \eps_0 > 0$ such that for all $\bv \in B_\delta(0)$, 
			\[
			\sup\{g^{\bv}_k(w, u) \Big| \|w\|\leq L , u \in (1-\eps_0,1)\} = k\gamma_1(\bv).
			\]
		\end{lemma}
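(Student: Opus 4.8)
\emph{Proof plan.} The plan is to prove the two matching inequalities separately. First I would record the relevant facts about $\gamma_1$: having normalized $\bar{\bv}=0$, the function $\gamma_1=-\Phi$ is $C^2$, strictly concave, attains its strict maximum at $0$ with $\gamma_1(0)=\mu(0)>0$ by \eqref{eq:supercrit} (so $\nabla\gamma_1(0)=0$), and decays superlinearly, $\gamma_1(z)/\|z\|\to-\infty$ as $\|z\|\to\infty$, since $\Phi$ is the Legendre transform of the everywhere--finite function $\mu$. I would then shrink $\delta$ so that $\gamma_1(\bv)\geq\mu(0)/2$ on $B_\delta(0)$, put $A:=\sup_{\|y\|\leq 1}\|D^2\gamma_1(y)\|<\infty$, and note the two consequences of concavity that for $\bv\in B_\delta(0)$ (with $\delta\leq 1$) and all $y\in\real^d$ one has $\|\nabla\gamma_1(\bv)\|\leq A\|\bv\|$ and $\gamma_1(y)\leq\gamma_1(\bv)+\nabla\gamma_1(\bv)\cdot(y-\bv)$.

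For the lower bound ``$\geq$'', I would simply test with $w=(1-u)\bv$; a one--line computation gives $g^{\bv}_k((1-u)\bv,u)=\bigl(1+(k-1)u\bigr)\gamma_1(\bv)\to k\gamma_1(\bv)$ as $u\uparrow 1$, and since $(1-u)\bv\to 0$ these points lie in the admissible region for $u$ close to $1$ (for any $L>0$, $\eps_0\in(0,1)$). This is exactly the near--maximizing configuration alluded to in the text.

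The substance is the upper bound. I would show that $\delta$ and $\eps_0$ can be chosen (with $\eps_0\leq 1/2$, and $L>0$ arbitrary) so that $g^{\bv}_k(w,u)\leq k\gamma_1(\bv)$ for all $\bv\in B_\delta(0)$, $w\in\real^d$, $u\in(1-\eps_0,1)$. Writing $a=(\bv-w)/u$ and $b=w/(1-u)$, so that $ua+(1-u)b=\bv$, concavity gives immediately $g^{\bv}_k(w,u)=ku\gamma_1(a)+(1-u)\gamma_1(b)\leq k\gamma_1(\bv)-(k-1)(1-u)\gamma_1(b)$, which already disposes of the case $\gamma_1(b)\geq 0$. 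In the complementary case $\gamma_1(b)<0$ I would split according to $\|b\|$. Fix $B$ so large (using superlinearity of $\Phi$) that $M(B):=\inf_{\|z\|\geq B}\Phi(z)/\|z\|\geq 2kA$. If $\|b\|\geq B$ then $\gamma_1(b)\leq -M(B)\|b\|$, hence $(1-u)\gamma_1(b)\leq -M(B)\|w\|$, and combining this with the tangent--line bound $\gamma_1(a)\leq\gamma_1(\bv)+A\delta\|a-\bv\|$, the estimate $\|a-\bv\|\leq 2\bigl((1-u)\delta+\|w\|\bigr)$ (valid since $u>1/2$), and $ku\gamma_1(\bv)=k\gamma_1(\bv)-k(1-u)\gamma_1(\bv)$, one arrives at $g^{\bv}_k(w,u)\leq k\gamma_1(\bv)+(1-u)\bigl(2kA\delta^2-k\gamma_1(\bv)\bigr)+\|w\|\bigl(2kA\delta-M(B)\bigr)$, and both parenthesized quantities are $\leq 0$ once $\delta\leq 1$ and $2A\delta^2\leq\mu(0)/2$. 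If instead $\|b\|<B$, then necessarily $\|w\|\leq(1-u)B$, so $\|a-\bv\|\leq 2(1-u)(\delta+B)$; the same tangent--line bound gives $ku\gamma_1(a)\leq k\gamma_1(\bv)-k(1-u)\gamma_1(\bv)+2kA\delta(1-u)(\delta+B)$, and discarding the negative term $(1-u)\gamma_1(b)$ yields $g^{\bv}_k(w,u)\leq k\gamma_1(\bv)+k(1-u)\bigl(2A\delta(\delta+B)-\gamma_1(\bv)\bigr)\leq k\gamma_1(\bv)$ as soon as $2A\delta(\delta+B)\leq\mu(0)/2$. Collecting the finitely many smallness requirements on $\delta$ (and taking $\eps_0=1/2$, $L=1$) finishes the proof, since the lower bound already shows the supremum is at least $k\gamma_1(\bv)$.

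I expect the main obstacle to be this last regime, $\gamma_1(b)<0$ with $\|b\|$ bounded: the naive estimate $\gamma_1(a)\leq\mu(0)$ is useless when $1-u$ is much smaller than $\|\bv\|^2$, and the point that makes the argument work is that in this regime $w$ is \emph{forced} to be of order $1-u$, so $a$ is within $O(1-u)$ of $\bv$ and the small slack between $ku\gamma_1(\bv)$ and $k\gamma_1(\bv)$ — which closes only in the limit $u\to 1$ — is exactly what pays for the error. The only other delicate point is that the ``far'' regime genuinely requires the superlinear decay of $\Phi$, not merely its strict convexity.
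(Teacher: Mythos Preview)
Your proposal is correct and takes a genuinely different route from the paper's argument. The lower bound via the test point $w=(1-u)\bv$ is the same, but your upper bound is organized around concavity rather than the paper's parametrization $(w,u)=(\ell\eps,1-\eps)$ followed by a two--step case analysis. In the paper, Step~I treats $\|\ell\|>M$ using the quadratic Aronson bound $\gamma_1(v)\leq a_1-\|v\|^2/a_1$ and a second--order Taylor comparison of $\gamma_1(\bv)$ and $\gamma_1(\bv-\ell\eps)$, while Step~II treats $\|\ell\|\leq M$ by showing $\frac{d}{d\eps}g^{\bv}_k(\ell\eps,1-\eps)\leq 0$ directly. Your argument instead exploits the affine relation $ua+(1-u)b=\bv$ to get $g^{\bv}_k\leq k\gamma_1(\bv)-(k-1)(1-u)\gamma_1(b)$ in one line, which already closes the case $\gamma_1(b)\geq 0$; the remaining case is then handled by the tangent--line bound $\gamma_1(a)\leq\gamma_1(\bv)+A\delta\|a-\bv\|$ together with either superlinear decay of $\Phi$ (large $\|b\|$) or the forced smallness $\|w\|\leq (1-u)B$ (bounded $\|b\|$). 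This is cleaner and in fact yields the upper bound for \emph{all} $w\in\real^d$, not just $\|w\|\leq L$, so the constant $L$ is irrelevant for you. The paper's derivative argument is more hands--on but makes the monotonicity in $\eps$ explicit, which is conceptually informative; your approach makes transparent that the whole lemma is really a consequence of the strict concavity of $\gamma_1$ and the fact that $\nabla\gamma_1(0)=0$.
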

		\proof
		We prove the above lemma in 2 steps. In Step I, we show that there exist $\delta_1>0$, $M>0$ and $\eps_1>0$ such that, for each $\bv \in B_{\delta_1}(0)$, for each $(w,u) = (\ell \eps, 1 - \eps)$, with $L/\eps >\|\ell\|>M$, and $\eps\in (0, \eps_1)$, we have $g^{\bv}_k(\ell \eps, 1 - \eps) <   k\gamma_1(\bv)$.
		
		In Step II, we show that there exist constants $\delta<\delta_1, \eps_0 \in  (0, \eps_1)$ such that for all $\bv \in B_{\delta}(0)$, for all $\|\ell\|\leq M$, $$\frac{d}{d\eps} \Big[g^{\bv}_k(\ell \eps, 1 - \eps)\Big]  \leq 0$$ for all $\eps < \eps_0$.

		{\bf{Step I:}} Note that from Lemma \ref{gammakdecay},
		\[
		g^{\bv}_k(\ell \eps, 1 - \eps) = k(1 - \eps) \gamma_1\big(\frac{\bv- \ell \eps}{1 - \eps}\big) + \eps\gamma_1(\ell)	\leq k\gamma_1\big(\bv- \ell \eps\big) +  \eps\gamma_1(\ell),
		\]
		where $L>0$ and $\delta_1>0$ are such that $\gamma_1(\bv - \ell\eps) >0$ for all  $\bv \in B_{\delta_1}(0)$, and $\|\ell\eps\|\leq L$. Thus, in order to prove that $ g^{\bv}_k(\ell \eps, 1 - \eps) <   k\gamma_1(\bv)$, it is enough to show that
		\[
		\frac{\eps}{k}\gamma_1(\ell) \leq  \gamma_1(\bv) - \gamma_1\big(\bv- \ell \eps\big).
		\]	
		From \eqref{aronsonm1}, we know that, for all $v\in \real^d$,
		\[
		\gamma_1(v) \leq a_1 - \frac{\|v\|^2}{a_1}.
		\] 
		Therefore,  we only need to show that 
		\[
		\frac{\eps}{k}(	a_1-\frac{\|\ell\|^2}{a_1}) \leq  \gamma_1(\bv) - \gamma_1\big(\bv- \ell \eps\big).
		\]
		Using Taylor's formula, we have
		\[
		\gamma_1(\bv) - \gamma_1\big(\bv- \ell \eps\big) = \eps\langle \nabla\gamma_1(\bv),\ell\rangle - \frac{\eps^2}{2}\langle D^2\gamma_1(\bv - q\ell\eps)\ell,\ell\rangle
		\]
		for some $q \in (0,1)$. Thus, we need to show that 
		\[
		\frac{1}{k}(	a_1-\frac{\|\ell\|^2}{a_1}) \leq \langle \nabla\gamma_1(\bv),\ell\rangle - \frac{\eps}{2}\langle D^2\gamma_1(\bv - q\ell\eps)\ell,\ell\rangle.
		\]
		That is, we need to show that
		
		\begin{equation}\label{opop}
			\frac{1}{k}(	a_1-\frac{\|\ell\|^2}{a_1}) + \frac{\eps}{2}\langle D^2\gamma_1(\bv - q\ell\eps)\ell,\ell\rangle -\langle \nabla\gamma_1(\bv),\ell\rangle\leq 0.
		\end{equation}
		Let $\bv\in B_{\delta_1}(0)$. Let $C =  \sup\{\|\partial_{i}\gamma_1(\bv)\|\Big|\bv \in B_{\delta_1}(0)\}$. Then we have the following lower bound, \[
		\langle \nabla\gamma_1(\bv),\ell\rangle \geq -C\|\ell\|.
		\]
		Let us fix $M\geq 1$ such that the following quadratic expression is positive, that is, 
		\[ 
		\frac{x^2 }{2ka_1}  -Cx -\frac{a_1}{k}\geq 0~~~\text{ for all}~~ \|x\|\geq M. 
		\] 
		For each $\bv \in B_{\delta_1}(0)$, $\|\ell \eps\|  \leq L$, $q\in (0,1)$ we have $(\bv -\eps q\ell )\in B_{\delta_1 + L}(0)$. Let  
		\[
		R = \sup\Big\{\|\partial_{i,j}\gamma_1(v)\|\Big| v \in B_{(\delta_1+L)}(0), 1\leq i,j\leq d\Big\}.\] This is a finite constant since the function $\gamma_1$ is twice continuously differentiable. Choose $ \eps_1>0$ such that $\eps_1R<\frac{1}{2a_1k}$. Then, for all $L/\eps \geq \|\ell\| \geq M$ and $\eps< \eps_1$, $\bv \in B_{\delta_1}(0)$,
		\[
		\frac{\|\ell\|^2}{a_1k} + \langle \nabla\gamma_1(\bv),\ell\rangle - \frac{a_1}{k} - \frac{\eps}{2}\langle D^2\gamma_1(\bv - q\ell\eps)\ell,\ell\rangle \geq \]
		\[\geq \frac{\|\ell\|^2}{2a_1k} - \frac{\eps}{2}\langle D^2\gamma_1(\bv - q\ell\eps)\ell,\ell\rangle\geq \eps_1 R\|\ell\|^2 -\frac{\eps}{2}\langle D^2\gamma_1(\bv - q\ell\eps)\ell,\ell\rangle \geq 0,
		\]
		which proves \eqref{opop}.

		{\bf{Step II:}} Recall that
		\[
		g^{\bv}_k(\ell \eps, 1 - \eps)= k(1 - \eps) \gamma_1\big(\frac{\bv- \ell \eps}{1 - \eps}\big) + \eps\gamma_1(\ell).
		\]
		Differentiating with respect to $\eps$ we obtain,  \[\frac{d}{d\eps}\Big[	g^{\bv}_k(\ell \eps, 1 - \eps)\Big] = -k\gamma_1\big(\frac{\bv- \ell \eps}{1 - \eps}\big) + \frac{k(\bv - \ell)}{1 - \eps}\nabla\gamma_1\big(\frac{\bv- \ell \eps}{1 - \eps}\big) + \gamma_1(\ell).\]
		Using the fact that the maximum of the function $\gamma_1(\bv)$ is achieved at  $\bv = 0$ and the fact that $\gamma_1(\bv)$ is strictly concave, choose $\delta_2 \in (0, \delta_1)$ be such that 
		\[
		\min\{\gamma_1(\bv)\Big| \bv \in B_{\delta_2}(0)\} > \frac{7}{4k}\gamma_1(0).
		\]
		Let $\eps_2\in (0,\eps_1)$ be such that for each $\bv \in B_{\frac{\delta_2}{2}}(0)$, for all $\eps < \eps_2$, and $\|\ell\| \leq M$, the vector  $\big(\frac{\bv- \ell \eps}{1 - \eps}\big)$ belongs to the $ B_{\delta_2}(0)$.
		
		Now choose $\delta\in (0, \delta_2/2)$ such that, for each $\bv \in B_{\delta}(0)$, we have \[
		k\langle(\bv - l),\nabla\gamma_1(\bv) \rangle< \frac{1}{8}\gamma_1(0),
		\]  for all $\|\ell\| \leq M$.
		This is possible since $\gamma_1$ achieves its maximum at $0$, that is $\nabla\gamma_1(0) = 0$. Choose $ \eps_0>0 $ with $\eps_0< (0,\eps_2)$ such that for all $\bv \in B_{\delta}(0)$ and for all $\|\ell\| \leq M$, we have 
		\[
		k\langle\frac{(\bv - \ell )}{1 - \eps_0} ,\nabla\gamma_1\big(\frac{\bv- \ell \eps_0}{1 - \eps_0}\big)\rangle < \frac{1}{4}\gamma_1(0).
		\]
		Thus, for all $\bv \in B_{\delta}(0)$, for all $\eps< \eps_0$ and for all $\|\ell\| \leq M$,
		\[
		\frac{d}{d\eps}\Big[	g^{\bv}_k(\ell \eps, 1 - \eps)\Big]< -\frac{7}{4}\gamma_1(0) + \frac{1}{4}\gamma_1(0) + \gamma_1(0) = -\frac{1}{2}\gamma_1(0) < 0.
		\]
		Thus, we conclude that, for all $\bv \in B_{\delta}(0)$,
		\[
		\sup\{g^{\bv}_k(w, u) \Big| \|w\|\leq L , u \in (1-\eps_0,1)\} \leq  k\gamma_1(\bv).
		\]
		But we know that, if $(w,u)  = (\bv(1-u), u)$ and $u$ approaches $1$, the value of $g^{\bv}_k(w, u)$ approaches $k\gamma_1(\bv)$. Therefore, 
		\[
		\sup\{g^{\bv}_k(w, u) \Big| \|w\|\leq L , u \in (1-\eps_0,1)\} = k\gamma_1(\bv).
		\]\qed
		
		The next lemma shows that the supremum of $g$ cannot be achieved if $u$ is close to 1, and  $w$ is separated from  the origin. 
		\begin{lemma}\label{small-lem2}
			For each $L>0$, there exist $\delta >0$ and $\eps_0 >0$ such that, for all $ \bv \in B_{\delta}(0)$, 
			\[\sup\{ g^{\bv}_k(w, u)\Big|  1 - \eps_0 < u \leq 1 , \|w\| \geq L \} < k\gamma_1(\bv).\]
		\end{lemma}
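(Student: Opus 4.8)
The plan is to dominate $g^{\bv}_k(w,u)$ by treating its two summands separately and showing that, once $u$ is pushed close to $1$ while $\|w\|$ is kept bounded below, the second (``entropic'') term $(1-u)\gamma_1\big(\tfrac{w}{1-u}\big)$ becomes arbitrarily negative, overwhelming everything else. First I would observe that the leading term is uniformly bounded above: since $\gamma_1\le\gamma_1(0)=\mu(0)$ pointwise (because $0=\bar{\bv}$ maximizes $\gamma_1$), $u\in(0,1)$, and $\gamma_1(0)>0$ by supercriticality, one has $ku\gamma_1\big(\tfrac{\bv-w}{u}\big)\le ku\gamma_1(0)\le k\gamma_1(0)$ for all $\bv,w\in\real^d$ and all $u\in(0,1)$.

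Next I would estimate the second term using the global quadratic bound $\gamma_1(v)\le a_1-\|v\|^2/a_1$ from \eqref{aronsonm1}. Writing $s=1-u$, this gives $(1-u)\gamma_1\big(\tfrac{w}{1-u}\big)\le s\,a_1-\|w\|^2/(a_1 s)\le s\,a_1-L^2/(a_1 s)$ whenever $\|w\|\ge L$; and since $s\mapsto s\,a_1-L^2/(a_1 s)$ is increasing on $(0,\infty)$, for $s<\eps_0$ this is at most $\eps_0 a_1-L^2/(a_1\eps_0)$. Combining the two estimates, for all $\|w\|\ge L$ and all $u\in(1-\eps_0,1)$,
\[
g^{\bv}_k(w,u)\ \le\ k\gamma_1(0)+\eps_0 a_1-\frac{L^2}{a_1\eps_0},
\]
a bound completely independent of $\bv$ and of the choice of $(w,u)$ in the stated range; the boundary value $u=1$ is trivial, since there $\|w\|\ge L>0$ forces $w/(1-u)$ to blow up and the second term to $-\infty$.

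To finish, I would choose $\eps_0=\eps_0(L,k)>0$ so small that $\eps_0 a_1-L^2/(a_1\eps_0)\le -\tfrac12 k\gamma_1(0)$ — possible because the left side tends to $-\infty$ as $\eps_0\downarrow 0$ — so that $g^{\bv}_k(w,u)\le \tfrac12 k\gamma_1(0)$ uniformly over the region in question. Then, using continuity of $\gamma_1$ together with $\gamma_1(0)>0$, I would pick $\delta=\delta(k)>0$ with $\gamma_1(\bv)>\tfrac12\gamma_1(0)$ for all $\bv\in B_\delta(0)$; for such $\bv$ the supremum of $g^{\bv}_k$ over $\{1-\eps_0<u\le 1,\ \|w\|\ge L\}$ is at most $\tfrac12 k\gamma_1(0)<k\gamma_1(\bv)$, which is exactly the claim.

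I do not anticipate a serious obstacle: unlike Lemma \ref{small-lem1}, no delicate Taylor expansion near $u=1$ is needed here, precisely because separating $w$ from the origin makes the cost term $-(1-u)\Phi\big(\tfrac{w}{1-u}\big)$ dominate outright. The only points requiring care are (a) arranging that the thresholds $\eps_0$ and $\delta$ do not depend on $\bv$ (or on $w,u$), which is automatic from the displayed uniform bound, and (b) the bookkeeping as $u\to 1$, where the second term must be read as its limit $-\infty$ (since $w\neq 0$), so that slice of the domain contributes nothing to the supremum.
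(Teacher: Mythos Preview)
Your proof is correct but takes a different route from the paper's. The paper keeps the second summand trivially bounded by $(1-u)\gamma_1(0)\le\eps_0\gamma_1(0)$ and instead extracts the gap from the \emph{first} summand: for $\|w\|\ge L$, $\|\bv\|\le L/2$, and $u\ge 1/2$ one has $\|(\bv-w)/u\|\ge L/2$, hence $\gamma_1\big(\tfrac{\bv-w}{u}\big)\le\alpha\gamma_1(0)$ for some fixed $\alpha<1$; this yields $g^{\bv}_k(w,u)\le(k\alpha+\eps_0)\gamma_1(0)$, and one closes by choosing $\delta$ with $\gamma_1(\bv)>(\alpha+\eps_0/k)\gamma_1(0)$ on $B_\delta(0)$. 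You do the opposite: bound the first term crudely by $k\gamma_1(0)$ and use the quadratic upper bound \eqref{aronsonm1} to force the second term to $-\infty$ as $u\uparrow 1$, uniformly over $\|w\|\ge L$. Your argument is a bit more direct (no need to localize $(\bv-w)/u$), while the paper's argument uses only that $\gamma_1$ has a strict global maximum at $0$ and does not invoke the Aronson-type quadratic decay; either way the constants $\eps_0,\delta$ are manifestly independent of $\bv,w,u$, so the conclusion follows.
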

		\proof Note that, 
		\[\|\frac{\bv - w}{u}\|
		\geq \frac{L}{2} >0 ~~\text{for each}~~ \|w\|\geq L, ~~\|\bv\|\leq L/2,~~ u \in [1/2,1).\]
		Take $\alpha \in (0,1)$ such that $\gamma_1(\ell) \leq \alpha \gamma_1(0)$ for all $\|\ell\| \geq L/2$. Here, we used the fact that the maximum of the function $\gamma_1$ is achieved at $\bv =0$ and $\gamma_1(\bv)$ is continuous.
		
		Choose an $\eps_0 < 1/2$ such that $\alpha + \frac{\eps_0}{k} <1$. Thus,
		\[
		g^{\bv}_k(w, u) = ku\gamma_1\big(\frac{\bv-w}{u}\big)+(1-u)\gamma_1\big(\frac{w }{1-u}\big) \leq (k\alpha +\eps_0)\gamma_1(0),
		\]
		for all $\bv \in B_{L/2}(0)$, $\| w \|> L, u \in [1-\eps_0,1)$.
		Now we choose a $\delta>0$ with $\delta < L/2$ such that, for all $\bv \in B_\delta(0)$, we have 
		$$\gamma_1(\bv) > (\alpha +\frac{\eps_0}{k})\gamma_1(0).$$ We can choose such a $\delta >0$ since $1 > (\alpha +\frac{\eps_0}{2})>0$, the maximum of the function $\gamma_1(\bv)$ is achieved at  $\bv = 0$, and $\gamma_1(\bv)$ is continuous.
		
		Thus, for all $ \bv \in B_{\delta}(0)$, we have
		\[\sup\{ g^{\bv}_k(w, u)\Big|  1 - \eps_0 < u \leq 1 , \|w\| \geq L \} < k\gamma_1(\bv).
		\]
		\qed
		
		The last of the three lemmas, Lemma \ref{small-lem3}, shows that there is a small ball centered around the origin, on which the value of the supremum of $g^{\bv}_k(w,u)$ in the region where $w\in \real^d$ and  $u$ is away from $1$ is strictly less than $k\gamma_1(\bv)$.
		\begin{lemma}\label{small-lem3}
			For each  $\eps>0$, there exists a $\delta >0$ such that, for all $\bv \in B_\delta(0)$,
			\[
			\sup\{ g^{\bv}_k(w, u)\Big|  1 - \eps > u \geq 0 ,  w \in \real^d\} < k\gamma_1(\bv).
			\]
		\end{lemma}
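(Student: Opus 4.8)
The plan is to bound $g^{\bv}_k(w,u)$ from above \emph{uniformly in $w\in\real^d$}, with the $u$--dependence concentrated in a single factor $(k-1)u$, and then to use that this factor is at most $(k-1)(1-\eps)$ on the range $u<1-\eps$, while $\gamma_1(\bv)$ stays close to its (strictly positive) maximum near the origin.

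The one identity that drives everything is
\[
u\cdot\frac{\bv-w}{u}+(1-u)\cdot\frac{w}{1-u}=\bv ,
\]
which exhibits $\bv$ as the convex combination, with weights $u$ and $1-u$, of the points $p_1:=\tfrac{\bv-w}{u}$ and $p_2:=\tfrac{w}{1-u}$. Recall that $\gamma_1=-\Phi$ is strictly concave and $C^2$ (since $\Phi$ is strictly convex, as recorded after \eqref{Phidef}), that its maximum is attained at $\bar{\bv}=0$, and that $\gamma_1(0)=\mu(0)=:m$, with $m>0$ by the supercriticality assumption \eqref{eq:supercrit}. Splitting
\[
g^{\bv}_k(w,u)=ku\,\gamma_1(p_1)+(1-u)\,\gamma_1(p_2)=(k-1)u\,\gamma_1(p_1)+\bigl[u\,\gamma_1(p_1)+(1-u)\,\gamma_1(p_2)\bigr],
\]
I would bound the bracket by $\gamma_1(\bv)$ using concavity of $\gamma_1$ together with the convex-combination identity above, and bound $\gamma_1(p_1)\le m$ using that $0$ maximizes $\gamma_1$. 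Since $u\ge 0$, this yields the uniform estimate
\[
g^{\bv}_k(w,u)\le (k-1)u\,m+\gamma_1(\bv)\qquad\text{for all }w\in\real^d,\ u\in(0,1).
\]

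To conclude: on the region $u<1-\eps$ the right-hand side is at most $(k-1)(1-\eps)m+\gamma_1(\bv)$, uniformly in $w$. Because $m>0$ and $\gamma_1$ is continuous at $0$ with $\gamma_1(0)=m>(1-\eps)m$, I would choose $\delta>0$ so that $\gamma_1(\bv)>(1-\eps)m$ for all $\bv\in B_\delta(0)$; then $(k-1)(1-\eps)m<(k-1)\gamma_1(\bv)$, and hence, for every $\bv\in B_\delta(0)$,
\[
\sup\{g^{\bv}_k(w,u)\; :\; 1-\eps>u\ge 0,\ w\in\real^d\}\le (k-1)(1-\eps)m+\gamma_1(\bv)<k\gamma_1(\bv),
\]
which is the claim. (At $u=0$ the term $ku\,\gamma_1(p_1)$ is read as its limit $0$; equivalently one takes the supremum over $u\in(0,1-\eps)$, the range on which $g^{\bv}_k$ is defined.)

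There is no genuine obstacle here: once the convex-combination identity is noticed, the bound is essentially a one-line consequence of concavity of $\gamma_1$. The only points that need a little care are that the final inequality be \emph{strict} --- which is exactly where the positivity $m>0$ and the choice of $\delta$ enter --- and the harmless reading of the $u=0$ endpoint. All the input on $\gamma_1$ (strict concavity, and the location and positivity of its maximum) is already available from the properties of $\Phi$ and from \eqref{eq:supercrit}, so no new work is required.
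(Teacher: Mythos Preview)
Your proof is correct. The overall strategy matches the paper's --- bound $g^{\bv}_k(w,u)$ uniformly in $w$ by an expression that is monotone in $u$, then use $u<1-\eps$ together with continuity of $\gamma_1$ at the maximizer $0$ --- but the tactic differs slightly. The paper simply bounds \emph{both} terms by the maximum value, obtaining $g^{\bv}_k(w,u)\le ku\,\gamma_1(0)+(1-u)\gamma_1(0)=(1+(k-1)u)\gamma_1(0)$, and then requires $\gamma_1(\bv)>\bigl(1-\tfrac{(k-1)\eps}{k}\bigr)\gamma_1(0)$. You instead exploit the convex-combination identity $u\,p_1+(1-u)\,p_2=\bv$ together with concavity of $\gamma_1$ to get the sharper bound $g^{\bv}_k(w,u)\le (k-1)u\,\gamma_1(0)+\gamma_1(\bv)$, which lets you take the slightly weaker condition $\gamma_1(\bv)>(1-\eps)\gamma_1(0)$. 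Your use of Jensen is a mild improvement in elegance and in the resulting $\delta$, but both arguments are short and rest on the same inputs (strict concavity of $\gamma_1=-\Phi$, its maximum at $0$, and supercriticality $\gamma_1(0)>0$).
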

		\proof
		Choose $\delta>0$ such that for all $ \bv \in B_\delta(0)$, $\gamma_1(\bv) > (1 - \frac{(k-1)\eps}{k})\gamma_1(0)$.
		We can choose such a $\delta >0$ since $1 > (1 - \frac{(k-1)\eps}{k})>0$, the maximum of the function $\gamma_1$ is achieved at $\bv =0$ and $\gamma_1(\bv)$ is continuous.
		Then, for all $\bv \in B_\delta(0)$, for all $w \in \real^d$ and $u \in [0, 1-\eps)$, 
		\begin{align*}
			&g^{\bv}_k(w, s)  = ku\gamma_1\big(\frac{\bv-w}{u}\big)+(1-u)\gamma_1\big(\frac{w }{1-u}\big)\\ & \leq ku \gamma_1(0)  + (1-u) \gamma_1(0) = (1+(k-1)u) \gamma_1(0)\\& \leq (k - (k-1)\eps)\gamma_1(0) < k\gamma_1(\bv).
		\end{align*}
		Therefore,  for all $ \bv \in B_\delta(0)$,
		\[
		\sup\{ g^{\bv}_k(w, u)\Big|  1 - \eps > u \geq 0 ,  w \in \real^d\}< k\gamma_1(\bv).
		\]
		\qed
		
		Thus, by the above three lemmas, there exists a sequence of positive constants $\{\beta_k\}_{k \geq 1}$ such that, for all $\bv \in B_{\beta_k}(0)$, \begin{equation}\label{fksup}
			f_k(\bv)= \lim\limits_{u \uparrow 1}g^{\bv}_k(\bv(1-u) ,u) = k\gamma_1(\bv). \end{equation}
		Now let us show that there exists a sequence of positive constants $\{\alpha_k\}_{k \geq 1}$ such that, for all $\bv \in B_{\alpha_k}(0)$, $f_k(\bv) = \gamma_k(\bv)$. This will be proved by induction.
		
		For $k = 2$, by the definition of $\gamma_2$, we have that, $f_2(\bv) = \gamma_2(\bv)$ for each $\bv \in\real^d$. Now suppose there exists constants $\alpha_i$ for $1 \leq i \leq k-1$ with $\alpha_i \in (0,\beta_i]$ such that $\gamma_i (\bv) = f_i(\bv) = i\gamma_1(\bv)$ for all $\bv \in B_{\alpha_i}(0)$. We need to show that there exists $\alpha_k \in (0,\beta_k]$ such that, for all $\bv \in B_{\alpha_k}(0)$, we have
		\[
		\gamma_k(\bv) := \sup\limits_{w \in \real^d, u \in (0,1)} \Big[ u \gamma_{k-1}\Big(\frac{\bv - w}{u}\Big) +  u \gamma_{1}\Big(\frac{\bv - w}{u}
		\Big) + (1-u)\gamma_1\Big(\frac{ w}{1-u}\Big)\Big]
		\]
		\[
		=  \sup\limits_{w \in \real^d, u \in (0,1)} \Big[ ku \gamma_1\Big(\frac{\bv - w}{u}\Big) + (1-u)\gamma_1\Big(\frac{ w}{1-u}\Big)\Big] =: f_k(\bv).
		\]
		To show this, it is enough to show that the supremum in the definition of $\gamma_k$ is achieved in the part of the space where the values of $\gamma_{k-1}$ and $(k-1)\gamma_1$ coincide. Let us define the cone   $\Gamma_k(\bv) =  \{(w, u) \in \real^d\times(0,1): \frac{|\bv - w|}{u} \leq \alpha_{k-1}\} \subseteq \real^d\times(0,1)$. It remains to show that $\Big[ u \gamma_{k-1}\Big(\frac{\bv - w}{u}\Big) +  u \gamma_{1}\Big(\frac{\bv - w}{u}
		\Big) + (1-u)\gamma_1\Big(\frac{ w}{1-u}\Big)\Big]$ on the set $\Gamma_k(\bv) ^c :=\real^d\times(0,1) \setminus \Gamma_k(\bv)$ is dominated by the supremum of the same expression over the set $\Gamma_k(\bv) $. We will show that there exists $\alpha_k>0$ such that, for all $\bv \in B_{\alpha_k}(0)$, we have
		\[
		\sup\limits_{(w,u) \in \Gamma_k(\bv) ^c} \Big[ u \gamma_{k-1}\Big(\frac{\bv - w}{u}\Big) +  u \gamma_{1}\Big(\frac{\bv - w}{u}
		\Big) + (1-u)\gamma_1\Big(\frac{ w}{1-u}\Big)\Big] \]\begin{equation}\label{yu}
			\leq \sup\limits_{(w,u)  \in  \Gamma_k(\bv)}\Big[ u \gamma_{k-1}\Big(\frac{\bv - w}{u}\Big) +  u \gamma_{1}\Big(\frac{\bv - w}{u}
			\Big) + (1-u)\gamma_1\Big(\frac{ w}{1-u}\Big)\Big].
		\end{equation}
		Note that, for each $(w,u)  \in  \Gamma_k(\bv)$, the expression on the RHS is 
		\[
		\sup\limits_{(w,u)  \in  \Gamma_k(\bv)}\Big[ ku \gamma_{1}\Big(\frac{\bv - w}{u}
		\Big) + (1-u)\gamma_1\Big(\frac{ w}{1-u}\Big)\Big].
		\]This expression,  as follows from \eqref{fksup}, is equal to $f_k(\bv) = k\gamma_1(\bv)$, as long as $(w,u) = (\bv(1-u),u) \in \Gamma_k(\bv)$ and $\|\bv\| \leq \beta_k$. That is, $\|\bv\|\leq \min\{\alpha_{k-1}, \beta_{k}\}$. The inequality \eqref{yu} is justified by the following lemma.
		\begin{lemma}There exists $0<\alpha_k< \min\{\alpha_{k-1},\beta_k\}$ such that, for each $\bv \in B_{\alpha_k}(0)$,
			\begin{equation}\label{okn}
				u \gamma_{k-1}\Big(\frac{\bv - w}{u}\Big) +  u \gamma_{1}\Big(\frac{\bv - w}{u}
				\Big) + (1-u)\gamma_1\Big(\frac{ w}{1-u}\Big) < k\gamma_1(\bv),
			\end{equation}
			for all $(w, u) \in \Gamma_k(\bv) ^c$.
			
		\end{lemma}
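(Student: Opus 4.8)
The plan is to reduce \eqref{okn} to a single uniform estimate for $\gamma_{k-1}+\gamma_1$ away from the origin, and then combine it with the super-critical fact $\gamma_1(0)=\mu(0)>0$ and the continuity of $\gamma_1$. Concretely, I would first prove the following claim: there is a constant $\delta>0$ (depending only on $k$) with
\[
\gamma_{k-1}(z)+\gamma_1(z)\le k\gamma_1(0)-\delta\qquad\text{for all }z\in\real^d\text{ with }\|z\|\ge\alpha_{k-1},
\]
and moreover $\delta<(k-1)\gamma_1(0)$ (the last requirement is harmless, since we may always decrease $\delta$).

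To prove the claim I would split the set $\{\|z\|\ge\alpha_{k-1}\}$ into a far regime and a compact annulus. In the far regime, \eqref{aronsonm1} gives $\gamma_1(z)\le a_1-\|z\|^2/a_1$, while the bound $\gamma_{k-1}(z)\le (k-1)\gamma_1(0)$ established above holds for every $z$; hence, choosing $R\ge\alpha_{k-1}$ large enough that $a_1-R^2/a_1\le\gamma_1(0)-1$, we obtain $\gamma_{k-1}(z)+\gamma_1(z)\le k\gamma_1(0)-1$ whenever $\|z\|\ge R$. On the compact annulus $\{\alpha_{k-1}\le\|z\|\le R\}$ the continuous function $\gamma_{k-1}+\gamma_1$ attains its maximum at some point $z^*$ with $\|z^*\|\ge\alpha_{k-1}>0$; since $\gamma_1=-\Phi$ is strictly concave with unique maximizer $\bar\bv=0$ we have $\gamma_1(z^*)<\gamma_1(0)$, and together with $\gamma_{k-1}(z^*)\le(k-1)\gamma_1(0)$ this forces that maximum to be strictly below $k\gamma_1(0)$. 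Writing it as $k\gamma_1(0)-\delta_0$ with $\delta_0>0$ and setting $\delta=\min\{1,\delta_0,(k-1)\gamma_1(0)/2\}$ proves the claim.

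Granting the claim, I would bound the left side of \eqref{okn} on $\Gamma_k(\bv)^c$, where $z:=(\bv-w)/u$ satisfies $\|z\|>\alpha_{k-1}$. Applying the claim to the two $z$-terms and the crude bound $\gamma_1\big(w/(1-u)\big)\le\gamma_1(0)$ to the last term,
\[
u\gamma_{k-1}(z)+u\gamma_1(z)+(1-u)\gamma_1\!\Big(\tfrac{w}{1-u}\Big)\le u\big(k\gamma_1(0)-\delta\big)+(1-u)\gamma_1(0)=\gamma_1(0)\big(1+(k-1)u\big)-u\delta ,
\]
and since $\delta<(k-1)\gamma_1(0)$ the right-hand side is strictly increasing in $u$, so it is $<k\gamma_1(0)-\delta$ for every $u\in(0,1)$. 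Finally, because $\gamma_1$ is continuous and $\gamma_1(0)-\delta/k<\gamma_1(0)$, there exists $\alpha_k\in\big(0,\min\{\alpha_{k-1},\beta_k\}\big)$ such that $\gamma_1(\bv)>\gamma_1(0)-\delta/k$ for all $\bv\in B_{\alpha_k}(0)$; for such $\bv$ the displayed estimate yields a value strictly below $k\gamma_1(0)-\delta=k\big(\gamma_1(0)-\delta/k\big)<k\gamma_1(\bv)$, which is exactly \eqref{okn}.

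I do not expect a genuine obstacle here: the only delicate point is the compactness step, where strict concavity of $\gamma_1$ (equivalently, strict convexity of $\Phi$ with unique minimizer $\bar\bv=0$) is needed to ensure that $\max_{\alpha_{k-1}\le\|z\|\le R}(\gamma_{k-1}+\gamma_1)$ is \emph{strictly} below $k\gamma_1(0)$ rather than merely $\le k\gamma_1(0)$. Everything else is bookkeeping with facts already in hand: $\gamma_1(0)>0$, the bound $\gamma_i(\cdot)\le i\gamma_1(0)$, the Aronson-type decay $\gamma_1(z)\le a_1-\|z\|^2/a_1$, continuity of the $\gamma_i$, and the inductive hypothesis $\gamma_{k-1}=(k-1)\gamma_1$ on $B_{\alpha_{k-1}}(0)$.
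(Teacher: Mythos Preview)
Your argument is correct and is in fact cleaner than the paper's. Both proofs exploit the same ingredients---the bound $\gamma_{k-1}\le(k-1)\gamma_1(0)$, strict concavity of $\gamma_1=-\Phi$ with unique maximizer at $0$, the Aronson-type quadratic decay of $\gamma_1$, and continuity---but they organize them differently.

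The paper proceeds by a case split on $u$: in Step~II ($u$ bounded away from $1$) it argues much as you do, using $\gamma_i\le i\gamma_1(0)$; in Step~I ($u$ close to $1$) it first deduces from the cone condition that $\|w\|$ is bounded below, then uses the quadratic decay of $\gamma_1$ to make the term $(1-u)\gamma_1\big(w/(1-u)\big)$ large and negative, driving the whole left-hand side below zero. Your approach sidesteps this split by observing that on $\Gamma_k(\bv)^c$ the single variable $z=(\bv-w)/u$ satisfies $\|z\|\ge\alpha_{k-1}$, and that a uniform gap $\gamma_{k-1}(z)+\gamma_1(z)\le k\gamma_1(0)-\delta$ on $\{\|z\|\ge\alpha_{k-1}\}$ suffices; the remaining $(1-u)\gamma_1(\cdot)\le(1-u)\gamma_1(0)$ is then absorbed by a one-line monotonicity argument in $u$. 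This is more economical: it treats all $u\in(0,1)$ at once and never needs to track $\|w\|$ separately. The only place your argument leans on something not entirely trivial is the compactness step for the gap on the annulus, and there the strict inequality indeed follows from strict convexity of $\Phi$ as you note.
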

		\begin{proof} The lemma will be proved in 2 steps. In Step I, the part of set $\Gamma_K(\bv)^c$ where $u$ is close to $1$ is considered. In this part of the set, we make use of the fact that $w$ is bounded from below.
			
			In Step II, the part of set $\Gamma_k(\bv)^c$ where $u$ is away from $1$ is considered. In this part of the set, the left hand side of $\eqref{okn}$ can be made strictly smaller than $k\gamma_1(0)$, while the right hand side can be made arbitrarily close to $k\gamma_1(0)$ by choosing $\bv$ in a small enough ball around the origin. 

			{\bf Step I: } Let $\delta_1 = \min\{\alpha_{k-1},\beta_k\}/4$. For all $(w,u) \in \Gamma_k(\bv)^c$, $\|\bv\| \leq \delta_1$ and $u \in [3/4,1)$, we have, 
			\[
			4\delta_1\leq \alpha_{k-1} <  \|\frac{\bv-w}{u }\| \leq \frac{4}{3}\|\bv - w\| \leq \frac{4}{3} \big(\|\bv\| + \|w\|\big) \leq \frac{4}{3}\big(\delta_1 + \|w\|\big).
			\]
			Therefore, $\|w\| \geq 2\delta_1$. Using $\eqref{aronsonm1}$, there exist $a>0$ and $M>0$ such that, for all $\|\ell\| \geq M$,
			\begin{equation}\label{mun}
				\gamma_1(\ell) < -a\|\ell\|^2.
			\end{equation}
			In addition, we choose $M>0$ large enough such that $\delta_1/M < 1/4$. Observe that, from $\eqref{mun}$, 
			\[(1-u)\gamma_1\Big(\frac{w}{1-u}\Big) \leq -a\frac{\|w\|^2}{1-u} \leq -\frac{4a\delta_1^2}{1-u},
			\]
			for all $u \in [1-\frac{\delta_1}{M}, 1)$, $(w,u) \in \Gamma_k(\bv)^c$ if $\|\bv\| \leq \delta_1$.
			From \eqref{obvio}, for each $(\bv, w,u) \in \real^d \times\real^d\times(0,1)$, we know that 
			\[
			u \gamma_{k-1}\Big(\frac{\bv - w}{u}\Big) +  u \gamma_{1}\Big(\frac{\bv - w}{u}
			\Big)\leq uk\gamma_1(0).
			\]
			Choosing  $\eps \in (0, \delta_1/M)$ such that $k\gamma_1(0) < 4a\delta_1^2/2\eps$, we obtain that, for each $(w, u)\in \real^d\times(1-\eps,1) \setminus\Gamma_k(\bv)$ for $\|\bv\| \leq \delta_1$, the left-hand side of equation \eqref{okn} is negative. 
			
			We now choose $\delta_2 \in (0, \delta)$ such that, for all $\|\bv\|< \delta_2$, we have $k\gamma_1(\bv)>0$. Thus the inequality $\eqref{okn}$ holds for all $\|\bv\|< \delta_2$,  for each $(w, u)\in \real^d\times(1-~\eps,1~)~\setminus~\Gamma_k(\bv)$. 

			{\bf Step II:}  Let $\eps>0$ be fixed. Choose $\alpha_k \in (0, \delta_2)$ such that $\gamma_1(\bv) > (1 - \eps + \frac{\eps}{k})\gamma_1(0)$ for all $|\bv|< \alpha_k$. Using \lemref{gammakdecay}, for each $\ell \in \real^d$, $u \in (0,1), i \in \naturals$, we have $\gamma_i(\ell)< \gamma_i(u\ell)$. Therefore, 
			\[
			u\gamma_{k-1}\Big(\frac{\bv - w}{u}\Big) +  u \gamma_{1}\Big(\frac{\bv - w}{u}
			\Big) < u\gamma_{k-1}(\bv - w) +  u \gamma_{1}(\bv - w) \leq (k-1)u\gamma_1(0) + u \gamma_{1}(0),
			\]
			where the last inequality follows from the trivial observation that $\gamma_{i}(\ell) \leq i\gamma_1(0)$, for all $i \in \naturals, \ell \in \real^d$.
			Therefore, for $u \in (0,1-\eps)$, the left hand side of \eqref{okn} can be bounded above as follows, 
			\[
			u \gamma_{k-1}\Big(\frac{\bv - w}{u}\Big) +  u \gamma_{1}\Big(\frac{\bv - w}{u}
			\Big) + (1-u)\gamma_1\Big(\frac{ w}{1-u}\Big) \leq (k-1)u\gamma_1(0) + u\gamma_1(0) + (1-u)\gamma_1(0) 
			\]
			\[
			\leq k(1-\eps + \frac{\eps}{k})\gamma_1(0).
			\]
			From the definition of $\alpha_k$, for all $|\bv|< \alpha_k$, $k\gamma_1(\bv) > k(1-\eps + \frac{\eps}{k})\gamma_1(0)$. Thus, we have shown that inequality $\eqref{okn}$ holds for all $|\bv|< \alpha_k$, for each $(w, u)\in \real^d \times(0, 1-\eps)\setminus\Gamma_k(\bv)$. 
		\end{proof}
		
		\noindent Now we prove that $\bigcap_{k \geq 1} G_k = \{0\}$. Let $\bv \in G_1$ be fixed, with $\|\bv\|> 0$. Now, we show that there exists $k \in \naturals$, large enough, such that $\gamma_k(\bv) > k\gamma_1(\bv)$. That is, there exists a pair $(w,u) \in \real^d \times(0,1)$ such that 
		\[
		u \gamma_{k-1}\Big(\frac{\bv - w}{u}\Big) +  u \gamma_{1}\Big(\frac{\bv - w}{u}
		\Big) + (1-u)\gamma_1\Big(\frac{ w}{1-u}\Big) > k\gamma_1(\bv).
		\]
		We first pick $w = \bv$. Then we need to show that there exist $u \in (0,1)$ and $k \in \naturals$, such that
		\begin{equation}\label{olp}
			u\gamma_1(0) + \frac{1-u}{k} \gamma_1\Big(\frac{\bv}{1-u}\Big) > \gamma_1(\bv).
		\end{equation}
		Let  $u = 1-\eps$ where $\eps>0$ small enough such that  $(1-\eps)\gamma_1(0) > \gamma_1(\bv)$. This is possible because $\|\bv\| >0$ and $\gamma_1(\bv)$ achieves its maximum value at $\bv = 0$.  Define $\eta  = (1-\eps)\gamma_1(0) - \gamma_1(\bv)>0$. Keeping $\bv$ and $\eps$ fixed, we pick $k \in \naturals$ large enough such that,
		\[
		\Big | \frac{\eps}{k} \gamma_1\Big(\frac{\bv}{\eps}\Big) \Big| < \eta /2.
		\]
		Therefore, 
		\[
		(1-\eps)\gamma_1(0) + \frac{\eps}{k} \gamma_1\Big(\frac{\bv}{\eps}\Big)   = \eta +  \gamma_1(\bv) +\frac{\eps}{k} \gamma_1\Big(\frac{\bv}{\eps}\Big)  >  \gamma_1(\bv) + \frac{\eta}{2} > \gamma_1(\bv).
		\]
		Thus $\bigcap_{k \geq 1} G_k = \{0\}$. This concludes the proof of Theorem \ref{thm:nof}.

		\section{Proof of \thmref{tot}}
		
		Without loss of generality, we may assume that $\bar{\bv} = 0$, which simplifies our notation.	Observe that the functions $f_k$ are clearly positive and continuous on the $d$-dimensional cube $\cube_0^d = [0,1)^d$, from their recursive definition. As in \eqref{kthmoment}, for each $k \geq 1$,  
		\begin{equation}\label{onn}
			\EXP(N(t,x)^k)	= \sum_{i = 1}^{k}S(k,i)\bar{m}_i(t,x),
		\end{equation}
		where $$\bm_i(t,x) = \int_{\real^d}....\int_{\real^d} \rho_k(t,x,y_1,y_2,...,y_k) dy_1...dy_k,$$
		where $\rho_i$'s are the particle density and higher order correlation functions, as defined in \eqref{rho1} and \eqref{rhon}. Thus, we observe that $\bm_i(t,x)$ satisfy the following PDEs on $\TOR^d$:
		\begin{equation}\label{barm1}
			\partial_t \bm_1(t,x) = \cL_x\bm_1(t,x), ~~~~\bm_1(0,x) \equiv 1,
		\end{equation}
		while, for $k \geq 2$,
		\begin{equation}\label{barmk}
			\partial_t \bm_k(t,x) = \cL_x\bm_k(t,x) + \alpha(x) \sum_{i = 1}^{k-1}\beta_i^k\bm_i(t,x)\bm_{k-i}(t,x), ~~~~\bm_k(0,x) \equiv 0,
		\end{equation}
		where $\beta_i^k = k!/(i!(k-i)!)$.
		
		We will prove the following lemma after completing the proof of the theorem. 
		\begin{lemma}\label{formmk}
			For each $k\in \naturals$, $x \in [0,1)^d$, 
			\begin{equation}\label{obb}
				\bm_k(t,x) = e^{k\mu t}\Big[f_k(x) + q_k(t,x)\Big],
			\end{equation}
			where $\lim\limits_{t \to \infty}q_k(t,x) = 0$ uniformly in $x \in [0,1)^d$, and $f_k$ have been defined in \eqref{fk}.
		\end{lemma}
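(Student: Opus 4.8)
\medskip
\noindent\emph{Proof plan.} The plan is to argue by induction on $k$, the engine being the Duhamel representation of $\bm_k$ against the torus heat kernel $\rh$, combined with the exponential bound \eqref{rhobound} and the strict supercriticality $\mu = \mu(0) > 0$ from \eqref{eq:supercrit}. (The same estimates will also show that the integrals in \eqref{fk} converge, so that $f_k$ is well defined.) For the base case $k=1$, I would observe that $v(t,x):=e^{-\mu t}\bm_1(t,x)$ solves $\partial_t v = (\cL_x - \mu)v$ on the torus with $v(0,\cdot)\equiv 1$. Since $\cL$ is elliptic on the compact torus, $\mu$ is a simple eigenvalue strictly dominating the rest of the spectrum, with positive eigenfunction $\ph$ and positive adjoint eigenfunction $\ph^*$ normalized by $\int \ph\ph^* = \int\ph^* = 1$ (Pinsky \cite{Pinsky}, Ch.~4). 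The standard convergence to the ground state then gives $v(t,\cdot) \to \langle \ph^*,1\rangle\,\ph = \ph = f_1$ in the supremum norm (in fact exponentially fast), which is \eqref{obb} for $k=1$ with $q_1(t,x) = v(t,x) - \ph(x)$.

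For the inductive step I would fix $k\ge 2$ and assume \eqref{obb} for all $i<k$. Then each $f_i$ is bounded (it is continuous on the torus), and each $q_i$ is globally bounded on $[0,\infty)\times[0,1)^d$: it is bounded on compact time intervals by continuity of solutions of the parabolic equation, and small for large times by the inductive hypothesis. Writing $\bm_i(s,z) = e^{i\mu s}(f_i(z)+q_i(s,z))$ gives $\bm_i(s,z)\bm_{k-i}(s,z) = e^{k\mu s}\bigl(f_i(z)f_{k-i}(z)+r_i(s,z)\bigr)$ with $r_i := f_i q_{k-i} + f_{k-i} q_i + q_i q_{k-i}$, a function that is uniformly bounded and satisfies $\sup_{s\ge T,\,z}|r_i(s,z)| \to 0$ as $T\to\infty$. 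Applying Duhamel's formula to \eqref{barmk}, inserting this expansion, and changing variables $\tau = t-s$, I get
\[
e^{-k\mu t}\bm_k(t,x) = \int_0^t\!\!\int_{[0,1)^d} e^{-k\mu\tau}\alpha(z)\sum_{i=1}^{k-1}\beta_i^k\bigl(f_i(z)f_{k-i}(z) + r_i(t-\tau,z)\bigr)\rh(\tau,x,z)\,dz\,d\tau .
\]
Comparing with \eqref{fk}, the difference splits as $e^{-k\mu t}\bm_k(t,x) - f_k(x) = A(t,x) + B(t,x)$, where $A$ is minus the tail $\int_t^\infty(\cdots)$ of the $f_i f_{k-i}$ term and $B = \int_0^t(\cdots)$ is the $r_i$ term.

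To finish, the key observation is that by \eqref{rhobound} and the boundedness of $\alpha$ and the $f_i$, for any bounded $\phi$ on the torus one has $\bigl|\int e^{-k\mu\tau}\alpha(z)\phi(z)\rh(\tau,x,z)\,dz\bigr| \le C\|\phi\|_\infty\, e^{-(k-1)\mu\tau}$, and $(k-1)\mu>0$ since $k\ge 2$ and $\mu>0$. This gives $|A(t,x)| \le C' e^{-(k-1)\mu t} \to 0$ uniformly in $x$ (and, integrated over $[0,\infty)$, the convergence of the defining integral of $f_k$). For $B$, given $\eta>0$ I would choose $T$ with $C''\max_i \sup_{s\ge T}\|r_i(s,\cdot)\|_\infty < \eta/2$ and split the $\tau$-integral at $\tau = t-T$: on $[0,t-T]$ the factor $r_i(t-\tau,\cdot)$ has sup-norm at most $\sup_{s\ge T}\|r_i(s,\cdot)\|_\infty$, so that piece is $<\eta/2$; on $[t-T,t]$ I bound $|r_i|$ by its global supremum and use $\int_{t-T}^t e^{-(k-1)\mu\tau}\,d\tau \le \frac{1}{(k-1)\mu}e^{-(k-1)\mu(t-T)}$, which is $<\eta/2$ for $t$ large — all bounds uniform in $x$. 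Hence $q_k := A + B \to 0$ uniformly, closing the induction. The main obstacle is not conceptual but bookkeeping: making the splitting for $B$ uniform in $x$ and recording the global boundedness of the $q_i$ before estimating $r_i$; everything ultimately hinges on combining \eqref{rhobound} with supercriticality $\mu>0$ to obtain the integrable decay rate $e^{-(k-1)\mu\tau}$.
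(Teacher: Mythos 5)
Your proposal is correct and follows essentially the same route as the paper's proof: induction via the Duhamel representation against $\rh$, the same expansion $\bm_i\bm_{k-i}=e^{k\mu s}(f_if_{k-i}+r_i)$ (the paper calls your $r_i$ by the name $h_i$), the same tail-plus-remainder decomposition, and the same use of \eqref{rhobound} with $\mu>0$ to control both pieces. The only cosmetic difference is that you split the remainder integral at $\tau=t-T$ for fixed large $T$ while the paper splits at $t/2$; both yield the same uniform convergence.
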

		\noindent Using formula \eqref{obb} in \eqref{onn},  we get
		\[
		\frac{\EXP(N(t,x)^k)}{e^{k\mu t}}	= \sum_{i = 1}^{k}S(k,i)e^{-(k-i)\mu t }\Big[f_i(x) + q_i(t,x)\Big].
		\]
		Therefore,
		\begin{align*}
			\lim\limits_{t \to \infty} \frac{\EXP(N(t,x)^k)}{e^{k\mu t}}& = f_k(x) + \lim\limits_{t \to \infty}\Big( q_k(t,x) + \sum_{i = 1}^{k-1}S(k,i)e^{-(k-i)\mu t }\Big[f_i(x) + q_i(t,x)\Big]\Big)\\& =  f_k(x).
		\end{align*}
		Now, we use induction to show that there exists a constant $A>0$ such that, for every $x \in [0,1)^d$,  $f_k(x) \leq A^k k!$. For $k = 1$, we know that the eigenfunction $\ph(x)$ corresponding to the principle eigenvalue $\mu$ of the operator $\cL$  on the $d$-dimensional torus $\TOR^d$ is a positive and continuous function. Therefore, there exists a constant $A_1>1$ such that, for every $x \in (0,1]^d$, $\ph(x) \leq A_1$.
		
		Suppose that for all $1 \leq j \leq k-1$, $x \in [0,1)^d$, $f_j(x) \leq A_1^j j!$. Then, from the definition of the function $f_k$, we get
		\[
		f_k(x) = \sum_{i =1}^{k-1}\beta_i^k \int_0^\infty\int_{[0,1)^d}e^{-k\mu t}\alpha(z)f_i(z)f_{k-i}(z)\rh(t,x,z)dzdt\]\[\leq  A_1^k(k-1) k!\int_0^\infty\int_{[0,1)^d}e^{-k\mu t}\alpha(z)\rh(t,x,z)dzdt.
		\]
		Recall that the operator $\cL-\mu$ has principle eigenvalue zero, while the principle eigenfunction of the adjoint operator $(\cL- \mu)^*$ is $\ph^*$ (with $\int_{[0,1)^d}\ph^*(z)\,dz = 1$). Therefore, there exists a constant $C>0$ such that, for every $x \in [0,1)^d$, $t>0$,  \[\int_{[0,1)^d}e^{ -t \mu }\alpha(z) \rh(t,x,z) dz \leq \big(\sup\limits_{x\in [0,1)^d}\alpha(x)\big)\int_{[0,1)^d}e^{ -t \mu }\rh(t,x,z) dz \leq C.\] Therefore, 
		\[
		f_k(x) \leq   CA_1^k (k-1) k!\int_0^\infty e^{-(k-1)\mu t}dt \leq k!A_1^k C/\mu.
		\]
		If $C/\mu\leq 1$, we pick $A = A_1$, and if $C/\mu > 1$, choose $A = A_1C/\mu$. With this choice of $A$ we obtain that, for every $x \in [0,1)^d$,  $f_k(x) \leq A^k k!$. From the convergence of all the moments of $N(t,x)/e^{\mu t}$, it follows that, there exists a random variable $\xi_x$ with the moments $f_k(x)$ (see \cite{frechet1931}). The uniqueness of the distribution of $\xi_x$ follows from the bound on $f_k$ by the Carleman theorem.  Except for a proof of \lemref{formmk}, this concludes the proof of Theorem~\ref{tot}.

		\proof[Proof of \lemref{formmk}] We use induction to prove this lemma. 
		The principle eigenvalue of the operator $\cL$ is $\mu>0$, and the corresponding eigenfunction $\ph(x)>0$. Thus, from the theory of elliptic operators, from \eqref{barm1}, there exists a function $q_1(t,x)$ such that 
		\begin{equation*}
			\bm_1(t,x) = e^{\mu t}\big[\ph(x) + q_1(t,x)\big],
		\end{equation*}
		where
		\[
		\lim\limits_{t \to \infty}q_1(t,x) = 0
		\]
		uniformly in $x \in [0,1)^d$. This gives \eqref{obb} for $k= 1$ with $f_1(x) = \ph(x)$. Suppose  that the conclusion of the lemma holds up to $k-1$, where $k \geq 2$. From \eqref{barmk}, using Duhamel's formula, we get
		\[
		\bm_k(t,x) = \int_0^t\int_{[0,1)^d}  \alpha(z) \sum_{i = 1}^{k-1}\beta_i^k\bm_i(s,z)\bm_{k-i}(s,z) \rh(t-s,x,z) dz ds.
		\]
		By the inductive assumption,
		\begin{align*}
			\bm_k(t,x) & =  \sum_{i = 1}^{k-1} \beta_i^k \int_0^t\int_{[0,1)^d}  e^{ k\mu s}\alpha(z)f_i(z)f_{k-i}(z) \rh(t-s,x,z) dz ds \\& + \sum_{i = 1}^{k-1} \beta_i^k \int_0^t\int_{[0,1)^d}  e^{ k\mu s}\alpha(z)h_i(s,z) \rh(t-s,x,z) dz ds,
		\end{align*}
		where \[h_i(s,z) := q_i(s,z)q_{k-i}(s,z) + q_i(s,z)f_{k-i}(z) + q_{k-i}(s,z)f_{i}(z).
		\]  
		After the change of variables $u  = t-s$, we get
		\begin{align*}
			\bm_k(t,x) & =  e^{k\mu t}\sum_{i = 1}^{k-1} \beta_i^k \int_0^t\int_{[0,1)^d}  e^{ -u k\mu }\alpha(z)f_i(z)f_{k-i}(z) \rh(u,x,z) dz du \\& + e^{k\mu t} \sum_{i = 1}^{k-1} \beta_i^k \int_0^t\int_{[0,1)^d}  e^{-u k\mu}\alpha(z)h_i(t-u,z) \rh(u,x,z) dz du\\
			& = e^{k\mu t} f_k(x) -  e^{k\mu t}\sum_{i = 1}^{k-1} \beta_i^k \int_t^\infty \int_{[0,1)^d}  e^{ -u k\mu }\alpha(z)f_i(z)f_{k-i}(z) \rh(u,x,z) dz du  \\&+  e^{k\mu t} \sum_{i = 1}^{k-1} \beta_i^k \int_0^t\int_{[0,1)^d}  e^{-u k\mu}\alpha(z)h_i(t-u,z) \rh(u,x,z) dz du.
		\end{align*}
		Define
		\begin{align*}
			q_k(t,x) &:=  \sum_{i = 1}^{k-1}\beta_i^k\Big(  \int_0^t\int_{[0,1)^d}  e^{-u k\mu}\alpha(z)h_i(t-u,z) \rh(u,x,z) dz du \\& - \int_t^\infty \int_{[0,1)^d}  e^{ -u k\mu }\alpha(z)f_i(z)f_{k-i}(z) \rh(u,x,z) dz du \Big).
		\end{align*}
		Thus, we have  
		\[
		\bm_k(t,x) = e^{k\mu t}\Big[f_k(x) + q_k(t,x)\Big].
		\]
		It remains to show that $\lim\limits_{t \to\infty}q_k(t,x) = 0$ uniformly in $x \in [0,1)^d$. Since the functions $\tc, f_i, f_{k-i}$ are non-negative and continuous on $[0,1)^d$, there exists a constant $C_i>0$ such that $0 \leq \alpha(z)f_i(z)f_{k-i}(z)<C_i$ for all $z\in[0,1)^d$. Therefore, 
		\begin{equation}\label{kok}
			\int_t^\infty \int_{[0,1)^d}  e^{ -u k\mu }\alpha(z)f_i(z)f_{k-i}(z) \rh(u,x,z) dz du \leq C_i\int_t^\infty \int_{[0,1)^d}e^{ -u k\mu } \rh(u,x,z) dz du .
		\end{equation}
		Therefore, from \eqref{rhobound}, the right hand side of the \eqref{kok} goes to zero uniformly in $x\in [0,1)^d$.
		To deal with the sum in the definition of $q_k(t,x)$, we break up the integral in two parts as follows,
		\[
		\Big|\int_0^t\int_{[0,1)^d}  e^{-u k\mu}\alpha(z)h_i(t-u,z) \rh(u,x,z) dz du\Big| \]\[\leq \Big|\int_0^{t/2}\int_{[0,1)^d}  e^{-u k\mu}\alpha(z)h_i(t-u,z) \rh(u,x,z) dz du\Big|\]\[+ \Big|\int_{t/2}^t\int_{[0,1)^d}  e^{-u k\mu}\alpha(z)h_i(t-u,z) \rh(u,x,z) dz du\Big|\]
		\[ \leq \sup\limits_{s \in (t/2,t), x \in [0,1)^d}|\alpha(x)h_i(s,x)|\int_0^{t/2}\int_{[0,1)^d}  e^{-u k\mu} \rh(u,x,z) dz du\]
		\[
		+ \sup\limits_{s \in (0 ,t/2), x \in [0,1)^d}|\alpha(x)h_i(s,x)| \int_{t/2}^t\int_{[0,1)^d}  e^{-u k\mu} \rh(u,x,z) dz du
		\]
		From $\eqref{rhobound}$, the integral in the first term is bounded and from the inductive hypothesis,
		\[
		\lim\limits_{t \to \infty}\sup\limits_{s \in (t/2,t), x \in [0,1)^d}|\alpha(x)h_i(s,x)| = 0,
		\]
		and therefore, the first term converges to zero uniformly in $x \in [0,1)^d$.

		Similarly, from the inductive hypothesis, the supremum in the second term is bounded, while, from \eqref{rhobound},   the integral in the second term converges to zero uniformly in $x \in [0,1)^d$. Thus, we conclude that 
		\[
		\lim\limits_{t \to \infty } \sup\limits_{x \in [0,1)^d} q_k(t,x) = 0,
		\]
		which completes the proof of Lemma $\ref{formmk}$.
		
		\qed

		\section{Proof of \thmref{allmoments}}
		Without loss of generality, we may assume that $\bar{\bv} = 0$, which simplifies our notation.
		Recall from \eqref{kthmoment}, 	
		\begin{equation}\label{jj}
			\EXP(n^{y}(t,x)^k) = \sum_{i = 1}^{k}S(k,i)m_i^{y}(t,x).
		\end{equation}
		We will show the following two statements by induction:\\
		\\
		(i) For each $r(t) = o(t)$, there exists the limit
		\[
		\lim\limits_{t \to \infty}\frac{m_k^{y(t)}(t,x)}{g(t,y(t))^k} = f_k(x)
		\]
		uniformly in $x \in [0,1)^d$ and  $\|y(t)\| \leq r(t)$.\\
		\\
		(ii) Let $\bar{r}(t) = o(t)$ be a function satisfying $r(t) = o(\bar{r}(t))$, with $\sqrt{t}/\bar{r}(t)\to 0$. Then 
		\[
		\lim\limits_{t \to \infty}\frac{m_k^{\bar{y}(t)}(t,x)}{g(t,y(t))^k} = 0
		\]
		uniformly in $x \in [0,1)^d$, $\|y(t)\| \leq r(t)$,  and   $\|\bar{y}(t)\| \geq \bar{r}(t)$.

		The theorem will then immediately follow from (i) since $g(t,y) \to \infty$  as $t \to \infty$ for $\|y\|\leq r(t)$ and therefore, the term with $i = k$ dominates in the sum in formula $\eqref{jj}$.
		
		For $k = 1$, using the asymptotic formula for $\rho_1(t,x,y)$ that was given in Theorem \ref{thmmain1}, we get
		\[
		m_1^{y}(t,x) = \int_{\TOR_y^d} \rho_1(t,x,z)dz = 
		\]
		\begin{equation}\label{formm1la}
			=(\sqrt{2\pi t})^{-d}\ph(x)
			\left(\int_{\TOR_y^d}\det[ D^2 \Phi(\frac{z-x}{t}) ]^{1/2}e^{-t\Phi(\frac{z-x}{t})} \ph^*(z)\,dz\right)
			\left[1 +o_L(1)\right],
		\end{equation}
		for all $x,y \in \real^d$ with $\|x-y\| \leq Lt$. Observe that the following limits exit uniformly in $x \in [0,1)^d$, $z \in \TOR_y^d$,  and $\|y(t)\| \leq r(t)$,\[
		\lim\limits_{t \to \infty}\det[ D^2 \Phi(\frac{z-x}{t})] = \det[ D^2 \Phi(0)],~~~\lim\limits_{t \to \infty} e^{-t\Phi(\frac{z-x}{t}) +t\Phi(\frac{y}{t})} = 1,
		\] 
		while $\int_{\TOR_y^d}\ph^*(z)d\,z = 1$.  Therefore, (i) holds for $k=1$. To prove (ii) for $k=1$, it enough to show that
		\begin{equation}\label{three}
			\lim\limits_{t \to \infty} e^{t\Phi(\frac{y(t)}{t}) -t\Phi(\frac{\bar{y}(t)}{t})} = 0
		\end{equation}
		uniformly in $\|y(t)\|\leq r(t)$, $\|\bar{y}(t) \| \geq \bar{r}(t)$.

		First observe that, given a small $c>0$, there exists a constant $m>0$ such that $-\mu - \Phi(v) \leq -c$ for all $\|v\| \geq m$. In addition, since $\frac{y(t)}{t} \to 0$ as $t \to \infty$, there exists $T_1>0$ such that $|\Phi\Big(\frac{y(t)}{t}\Big) + \mu| \leq c/2$ for all $t \geq T_1$.  Therefore, whenever $\|\frac{\bar{y}(t)}{t}\| \geq m$, we have $\Phi\Big(\frac{y(t)}{t}\Big) - \Phi\Big(\frac{\bar{y}(t)}{t}\Big)\leq -c/2$. for all $t \geq T_1$.
		That is, if $\|\frac{\bar{y}(t)}{t}\| \geq m$, then
		\begin{equation}\label{one}
			e^{t\Phi(\frac{y(t)}{t}) -t\Phi(\frac{\bar{y}(t)}{t})} \leq e^{-tc/2}\end{equation}
		for all $t \geq T_1$.
		
		We choose $T_2>T_1$ such that, for all $t\geq T_2$, $\|\frac{y(t)}{t}\| \leq m$. Observe that there exist constants $c_1,c_2>0$ such that \be 
		\begin{equation}\label{second}
			c_1\|
			\bv\|^2 \leq |\langle D^2\Phi\left(v\right)\bv, \bv\rangle| \leq c_2\|\bv\|^2\end{equation} for all $\bv\in \real^d$ with $\|\bv\|\leq m$.
		Whenever $\|\frac{\bar{y}(t)}{t}\| \leq m$, using Taylor's formula, for all $t \geq T_2$, there exist $\alpha_1, \alpha_2 \in (0,1)$ such that 
		\[
		t\Big(\Phi\big(\frac{y(t)}{t}\big)-\Phi\big(\frac{\bar{y}(t)}{t}\big) \Big) =  t\Big[ \langle \frac{y(t)}{t},D^2\Phi\left(\frac{\alpha_1y(t)}{t} \right)\frac{y(t)}{t} \rangle - \langle \frac{\bar{y}(t)}{t},D^2\Phi\left(\frac{\alpha_2
			\bar{y}(t)}{t} \right)\frac{\bar{y}(t)}{t} \rangle\Big]\]
		\begin{equation}\label{two}
			\leq c_2\frac{\|\bar{r}(t)\|^2}{t}\Big[\Big\|\frac{r(t)}{\bar{r}(t)}\Big\|^2- \frac{c_1}{c_2}\Big].
		\end{equation}
		Since $\sqrt{t}/\bar{r}(t) \to 0$, and $r(t) = o(\bar{r}(t))$, \eqref{two} and \eqref{one} imply \eqref{three}.
		This concludes the proof of (i) and (ii) for $k=1$. 
		
		Now, let us assume that (i) and (ii) hold up to $k-1$, where $k \geq 2$. We first prove (i) for $k$. Let $\bar{r}(t) = o(t)$ be a function satisfying $r(t) = o(\bar{r}(t))$, with $\sqrt{t}/\bar{r}(t)\to 0$.
		Recall from \eqref{exactmk},  
		\begin{equation*}
			m_k^y(t,x) =\int_0^t \int_{\real^d} \alpha(z) \sum_{i = 1}^{k-1}\beta_i^km_i^y(s,z)m_{k-i}^y(s,z) \rho_1(t-s,x,z)dz ds.
		\end{equation*}
		Let $\eps\in (0,1)$, to be selected later. Let us define the following
		\[
		A_k(t,x,y(t)) := \int_0^{\eps t} \int_{\real^d}\alpha(z) \sum_{i = 1}^{k-1}\beta_i^km_i^{y(t)}(s,z)m_{k-i}^{y(t)}(s,z)\rho_1(t-s,x,z)dz ds,
		\] 
		\[
		B_k(t,x,y(t)) : =  \int_{\eps t}^{ t} \int_{\|z-y(t)\|\geq \bar{r}(t)}\alpha(z) \sum_{i = 1}^{k-1}\beta_i^km_i^{y(t)}(s,z)m_{k-i}^{y(t)}(s,z) \rho_1(t-s,x,z)dz ds,
		\]
		\[
		C_k(t,x,y(t)) :=  \int_{\eps t}^{t} \int_{\|z-y(t)\|\leq  \bar{r}(t)}\alpha(z) \sum_{i = 1}^{k-1}\beta_i^km_i^{y(t)}(s,z)m_{k-i}^{y(t)}(s,z)\rho_1(t-s,x,z)dz ds.
		\]
		By \eqref{aronsonk}, we can choose $\eps>0$ small enough so that, for each $1\leq i \leq k-1$,\[m_i^{y}(s,z)m_{k-i}^{y}(s,z) \leq ce^{\mu t/2}\] for all $0 \leq s \leq \eps t$ and $z,y\in \real^d$.  For this fixed $\eps>0$, choosing a sufficiently large $L>0$, we use the asymptotic formula for $\rho_1(t,x,z)$ that was given in Theorem \ref{thmmain1} in the region $\|z-x\|\leq Lt$ and the estimate \eqref{rho} elsewhere, to obtain that $A_k(t,x,y) \leq c_1 e^{3\mu t/2}$, for all $x,y \in \real^d$. Therefore, there exists a constant $C>0$ such that,
		\[
		\lim\limits_{t \to \infty} \frac{A_k(t,x, y(t))}{g(t,y(t))^k} \leq \lim\limits_{t \to \infty}C (2\pi t)^{d/2}e^{(3/2 - k)\mu t} =  0,
		\]
		uniformly in $x \in [0,1)^d$ and  $\|y(t)\| \leq r(t)$, since $k\geq 2$. Next we show that
		\[
		\lim\limits_{t \to \infty} \frac{B_k(t,x, y(t))}{g(t,y(t))^k}  = 0.
		\] Since the operator $\cL$ is periodic, we first observe that $m_k^{y}(t,x) =  m_k^{y-[x]}(t,\{x\}) $ for all $k \in \naturals$, $x,y \in \real^d$,  and $t \geq 0$. For all $1 \leq i\leq k-1$, from (ii) we have 
		\[
		\lim\limits_{t \to \infty} \frac{m_i^{y}(s(t),
			z)}{g(s(t),y(t))^i} = \lim\limits_{t \to \infty} \frac{m_i^{y-[z]}(s(t),
			\{z\})}{g(s(t),y(t))^i}  =0
		\] uniformly  in $ s(t) \in (\eps t, t)$, $\|y(t)\| \leq r(t)$ and $\|z-y(t)\|\geq \bar{r}(t)$ where $[\cdot]$ denotes the greatest integer function in $d$ dimensions, and  $\{z\} = z - [z]$ . 
		Thus, it is enough to show that there exists a $C>0$ such that 
		\[
		\lim\limits_{t \to \infty} \frac{\sup\big\{g(s ,y(t))^k\big|s \in (\eps t, t)\}}{g(t,y(t))^k} \int_{\eps t}^{t}\int_{\|z-y(t)\|\geq \bar{r}(t)} \alpha(z)\rho_1(t-s,x,z)dz ds \leq C.
		\] 
		Choosing a sufficiently large $L>0$, we use the asymptotic formula for $\rho_1(t,x,z)$ that was given in Theorem \ref{thmmain1} in the region $\|z-x\|\leq Lt$ and the estimate \eqref{rho} elsewhere, to obtain that \begin{equation}\label{boundrho}
			\int_{\real^d} \rho_1(t-s,x,z)dz \leq a e^{\mu(t-s)}.
		\end{equation}
		Thus, it is enough to show that 
		\begin{equation}\label{now}
			\lim\limits_{t \to \infty} \sup\big\{e^{kt\Big[\Phi\big(\frac{y(t)}{t}\big)-\frac{s}{t}\Phi\big(\frac{y(t)}{s}\big)\Big]} e^{\mu(t-s)}\big|s \in (\eps t, t)\big\} = 1.
		\end{equation} 
		Note that  $e^{kt\Big[\Phi\big(\frac{y(t)}{t}\big)-\frac{s}{t}\Phi\big(\frac{y(t)}{s}\big)\Big]} e^{\mu(t-s)} = 1$ when $s = t$. 
		We show that, for sufficiently large $t$, the supremum in the above expression is achieved when $s = t$, when $t$ is large enough. To show the claim, for $s =  t-\delta$, we will show that 
		\[
		kt\Big[\Phi\big(\frac{y(t)}{t}\big)-\Phi\big(\frac{y(t)}{t-\delta}\big)\Big] + k\delta\Phi\big(\frac{y(t)}{t}\big) +\mu \delta<0.
		\]
		Recall that $\Phi$ is continuous and the minimum value of the function $\Phi$ is achieved at $0$, which is $\Phi(0) = -\mu<0$. In addition, recall that $r(t) = o(t)$. Thus, since $k \geq 2$, we conclude that there exists $\eta>0$ such that, for all sufficiently large $t$,\[
		k\delta\Phi\big(\frac{y(t)}{t}\big) +\mu \delta< -\eta.
		\]
		Thus, it is enough to show that, for all sufficiently large $t$, 
		\begin{equation}\label{pp}
			\Big|kt\Big(\Phi\big(\frac{y(t)}{t}\big)-\Phi\big(\frac{y(t)}{t-\delta}\big)\Big)\Big|<\eta/2.
		\end{equation}
		Indeed, for large $t$, the value of $\|y(t)/t\|$ is close to $0$, while $\|y(t)/(t-\delta) \| =\|y(t)/s\| \leq \frac{1}{\eps}\|y(t)/t\|$ is also close to $0$. Thus, using the fact that $\nabla\Phi(0) = 0$ and Taylor's formula, we obtain that \eqref{pp} holds. Thus, we have shown that the supremum in \eqref{now} is achieved when $s = t$, when $t$ is large enough. This completes the proof of \eqref{now}. Next we show that
		\[
		\lim\limits_{t \to \infty} \frac{C_k(t,x, y(t))}{g(t,y(t))^k}  =f_k(x).
		\]
		In the region $\|z-y(t)\|\leq  \bar{r}(t)$, by the inductive assumption, we can replace 
		\[\sum_{i = 1}^{k-1}\beta_i^km_i^{y(t)}(s,z)m_{k-i}^{y(t)}(s,z)~~~\text{by}~~~g(s,y(t) -[z])^k\sum_{i = 1}^{k-1}\beta_i^kf_i(z)f_{k-i}(z)\] in the integral. Therefore, we obtain 
		\[
		\lim\limits_{t \to \infty}\frac{ C_k(t,x,y(t))}{ g(t,y(t))^k}= \]\[ = \lim\limits_{t \to \infty}\sum_{i = 1}^{k-1}\beta_i^k\int_{\eps t}^{ t}   \int_{\|z-y(t)\|\leq \bar{r}(t)}\Big(\frac{g(s, y(t)-[z])}{g(t,y(t))}\Big)^k\alpha(z)f_i(z)f_{k-i}(z) \rho_1(t-s,x,z)dz ds. \] 
		Therefore, using a change of variable, it remains to show that, for each $1 \leq i \leq k-1$, 
		\begin{equation}\label{quotient}
			\lim\limits_{t \to \infty}\int_{0}^{( 1- \eps) t}  \int_{\|z-y(t)\|\leq \bar{r}(t)}\Big( \frac{g(t-s, y(t)-[z])}{g(t,y(t))}\Big)^k \alpha(z)f_i(z)f_{k-i}(z) \rho_1(s,x,z)dz ds=\end{equation}
		\[ =  \int_0^\infty\int_{[0,1)^d}e^{-k\mu s}\alpha(z)f_i(z)f_{k-i}(z)\rh(s,x,z)dzds .
		\]
		Let $\eta>0$ be fixed. From (ii), if $\|y(t)-[z]\|/\|y(t)\| \to \infty$, then,\[\Big( \frac{g(t, y(t)-[z])}{g(t,y(t))}\Big)^k \to \infty.\]
		Note that,
		\[
		\frac{g(t-s, y(t) - [z])}{g(t,y(t))} =\]\[= \Big(\frac{t}{t-s}\Big)^{(d/2)}\exp\Big[t\Big(\Phi\Big(\frac{y(t)}{t}\Big) -\Phi\Big(\frac{y(t)-[z]}{t-s}\Big)\Big) + s\Phi\Big(\frac{y(t)-[z]}{t-s}\Big)\Big].
		\]
		The term $(t/t-s)^{(d/2)}$ is bounded when $0 \leq s \leq (1-\eps)t$. Given $\delta>0$ small, using the fact that $\|z-y(t)\|\leq \bar{r}(t)= o(t), \|y(t)\|\leq r(t) = o(t)$ and $\nabla\Phi(0)= 0$, from Taylor's formula, there exists $\alpha\in (0,1)$ and $\ell(t) = \frac{y(t)-[z]}{t-s} + \alpha\Big(\frac{y(t)}{t} - \frac{y(t)-[z]}{t-s}\Big)$ such that, for all sufficiently large $t$, for all $0 \leq s\leq (1-\eps)t$,
		\[
		t\Big(\Phi\Big(\frac{y(t)}{t}\Big) -\Phi\Big(\frac{y(t)-[z]}{t-s}\Big)\Big) + s\Phi\Big(\frac{y(t)-[z]}{t-s}\Big)\]
		\[
		\leq t\|\nabla\Phi\Big(\ell(t)\Big)\| \Big\|\frac{t[z]-sy(t)}{t(t-s)}\Big\| + (-\mu + \delta )s \]
		\begin{equation}\label{ff}
			\leq  \delta \|z\| + (-\mu + 2\delta )s .\end{equation}
		Therefore, using \eqref{rho}, we conclude that
		\[
		\int_{\|z-y(t)\|\leq \bar{r}(t)}\Big( \frac{g(t-s, y(t)-[z])}{g(t,y(t))}\Big)^k \alpha(z)f_i(z)f_{k-i}(z) \rho_1(s,x,z)dz\leq \]
		\[\leq C\int_{\|z-y(t)\|\leq \bar{r}(t)} \exp\Big[k(-\mu + 2\delta )s + k\delta \|z\|\Big] \rho_1(s,x,z)dz\leq \]
		\[\leq \tilde{C}s^{-d/2}\exp\Big[k(-\mu + 2\delta )s + \mu s \Big]\int_{\|z-y(t)\|\leq \bar{r}(t)}  \exp \Big[k\delta \|z\| - \frac{\|z\|^2}{cs}\Big]dz  \leq 
		\]
		\[\leq \tilde{C}s^{-d/2}\exp\Big[-(k-1)\mu s +k2\delta s + cs\delta^2k^2/4\Big]\int_{\real^d}  \exp \Big[-\Big(\frac{\|z\|}{\sqrt{cs}} - \frac{k\delta \sqrt{cs}}{2}\Big)^2\Big]dz  .
		\]
		Now, by choosing $\delta$ small enough so that $k\delta<\mu/8$, and $c\delta^2k^2< \mu$, we have, for all $k \geq 2$,  
		\[
		-(k-1)\mu s +k\delta s + cs\delta^2k^2/4 \leq -\mu s/2.
		\]
		Therefore, there exists $m>0$ such that, for all $t$ sufficiently large, 
		
		\begin{equation}\label{some}
			\int_{m}^{( 1- \eps) t}  \int_{\|z-y(t)\|\leq \bar{r}(t)}\Big( \frac{g(t-s, y(t)-[z])}{g(t,y(t))}\Big)^k \alpha(z)f_i(z)f_{k-i}(z) \rho_1(s,x,z)dz ds< \eta/10
		\end{equation}
		uniformly in $x \in [0,1)^d$ and  $\|y(t)\| \leq r(t)$. From \eqref{rhobound}, we can show that
		\begin{equation}\label{som}
			\int_m^\infty\int_{[0,1)^d}e^{-k\mu s}\alpha(z)f_i(z)f_{k-i}(z)\rh(s,x,z)dzds  < \eta
		\end{equation}
		uniformly in $x \in [0,1)^d$. Now it remains to show that, for each $0 \leq s \leq m$, 
		\[
		\int_0^m\int_{\|z-y(t)\|\leq \bar{r}(t)}\Big( \frac{g(t-s, y(t)-[z])e^{\mu s}}{g(t,y(t))}\Big)^k \alpha(z)f_i(z)f_{k-i}(z) \rho_1(s,x,z)dz ds\]\[\to 
		\int_0^m\int_{[0,1)^d}\alpha(z)f_i(z)f_{k-i}(z)\rh(s,x,z)dzds.
		\]
		Observe that $\rho_1$ is the fundamental solution of the operator $\cL$ on $\real^d$, while $\rh$ is the fundamental solution of the same operator on  $\TOR^d$. Therefore, for each $s \geq 0$, $x \in [0,1)^d$, and each continuous $\bbZ^d-$ periodic function $h :\real^d \to \real$, we have the relation
		\[
		\int_{\real^d}h(z)\rho_1(s,x,z)dz = \int_{[0,1)^d}h(z)\rh(s,x,z)dz.
		\]
		Also, if $\|z - y(t)\| \geq \bar{r}(t)$, and $\|y\| \leq r(t)$ where $r(t) = o(\bar{r}(t))$, we conclude that, for sufficiently large $t$, $\|z-x\|\leq c\bar{r}(t)$, for some $c>0$. Therefore, for $0\leq s \leq m$, $1 \leq i \leq k-1$, for sufficiently large $t$, using \eqref{rho}, we obtain
		\begin{equation}\label{so}
			\int_0^m \int_{|z-x|\geq c\bar{r}(t)}\alpha(z)f_i(z)f_{k-i}(z)\rho_1(s,x,z)dzds <\eta.
		\end{equation}
		Thus, it remains to show that,  $1 \leq i \leq k-1$, 
		\[
		\int_{\|z-y(t)\|\leq \bar{r}(t)}\Big|\Big( \frac{g(t-s, y(t)-[z])e^{\mu s}}{g(t,y(t))}\Big)^k -1\Big|\alpha(z)f_i(z)f_{k-i}(z) \rho_1(s,x,z)dz \to 0
		\]
		uniformly in $s \in [0,m]$. Let us first prove that there exists $R>0$ such that,  for $0\leq s \leq m$, $1 \leq i \leq k-1$, 
		\begin{equation}\label{somet}
			\int_{\stackunder{\|z\|\geq R}{\|z-y(t)\|\leq \bar{r}(t)}}\Big|\Big( \frac{g(t-s, y(t)-[z])e^{\mu s}}{g(t,y(t))}\Big)^k -1\Big|\alpha(z)f_i(z)f_{k-i}(z) \rho_1(s,x,z)dz < \eta.
		\end{equation}
		As in \eqref{ff}, and using \eqref{rho}, given $\delta>0$ small, for all sufficiently large $t$, and for $0 \leq s \leq m$,  we get,
		\[
		\int_{\stackunder{\|z\|\geq R}{\|z-y(t)\|\leq \bar{r}(t)}}\Big( \frac{g(t-s, y(t)-[z])e^{\mu s}}{g(t,y(t))}\Big)^k \alpha(z)f_i(z)f_{k-i}(z) \rho_1(s,x,z)dz\leq \]
		\[\leq \tilde{C}s^{-d/2}\exp\Big[\mu s +2k\delta s + cs\delta^2k^2/4\Big]\int_{\|z\|\geq R} \exp \Big[-\Big(\frac{\|z\|}{\sqrt{cs}} - \frac{k\delta \sqrt{cs}}{2}\Big)^2\Big]dz  .
		\]
		By choosing $R>0$ large enough, the right can be made arbitrarily small uniformly for all  $0 \leq s \leq m$. Thus, \eqref{somet} holds. 
		Now it remains to show that for this positive constant $R>0$, we have 
		\begin{equation}\label{nuy}
			\lim\limits_{t \to \infty} \sup\Big\{ \big|\frac{g(t-s, y(t) - [z])e^{\mu s}}{g(t,y(t))}-1\big|\Big|s \in [0, m], \|y(t)\| \leq r(t), \|z\| \leq  R\Big\} = 0.
		\end{equation}
		To see this, as before, we observe that,
		\[
		\frac{g(t-s, y(t) - [z])e^{\mu s}}{g(t,y(t))} =  \Big(\frac{t}{t-s}\Big)^{(d/2)}e^{\Big[t\Big(\Phi\Big(\frac{y(t)}{t}\Big) -\Phi\Big(\frac{y(t)-[z]}{t-s}\Big)\Big) + s\Big(\mu +\Phi\Big(\frac{y(t)-[z]}{t-s}\Big)\Big)\Big]}.
		\]
		Given $\delta>0$ small, for each sufficiently large $t$,
		\[t\Big|\Phi\Big(\frac{y(t)}{t}\Big) -\Phi\Big(\frac{y(t)-[z]}{t-s}\Big)\Big|< \delta/2 ,~~\text{and}~~ s\Big|\mu +\Phi\Big(\frac{y(t)-[z]}{t-s}\Big)\Big|<\delta/2,
		\]
		for all $\|z\|\leq R$, $\|y(t)\|\leq r(t)$ $\|y(t)-z\|\leq \bar{r}(t)$ and $0 \leq s\leq m$. Therefore, we can choose $\delta>0$  small enough, such that, for all sufficiently large $t$, 
		\begin{equation}\label{somethi}
			\int_{\stackunder{\|z\|\leq R}{\|z-y(t)\|\leq \bar{r}(t)}}\Big|\Big( \frac{g(t-s, y(t)-[z])e^{\mu s}}{g(t,y(t))}\Big)^k -1\Big|\alpha(z)f_i(z)f_{k-i}(z) \rho_1(s,x,z)dz < \eta.
		\end{equation}
		Since $\eta>0$ was arbitrary,   \eqref{some}, \eqref{som}, \eqref{so}, \eqref{somet} and \eqref{somethi} complete the proof of (i) for $k$.

		We now prove (ii) for $k$. For fixed $r(t)$ and $\bar{r}(t)$ as in (ii), choose  $p(t)$ such that $r(t)\ll p(t) \ll  \bar{r}(t)$. Again, divide the integral in the definition of $m_k^{\bar{y}(t)}(t,x)$ into the following three integrals:
		\[
		\bar{A}_k(t,x,\bar{y}(t)) := \int_0^{\eps t} \int_{\real^d}\alpha(z) \sum_{i = 1}^{k-1}\beta_i^km_i^{\bar{y}(t)}(s,z)m_{k-i}^{\bar{y}(t)}(s,z)\rho_1(t-s,x,z)dz ds,
		\] 
		\[
		\bar{B}_k(t,x,\bar{y}(t)) : =  \int_{\eps t}^{ t} \int_{|z-\bar{y}(t)|\geq p(t)}\alpha(z) \sum_{i = 1}^{k-1}\beta_i^km_i^{\bar{y}(t)}(s,z)m_{k-i}^{\bar{y}(t)}(s,z) \rho_1(t-s,x,z)dz ds,
		\]
		\[
		\bar{C}_k(t,x,\bar{y}(t)) :=  \int_{\eps t}^{t} \int_{|z-\bar{y}(t)|\leq  p(t)}\alpha(z) \sum_{i = 1}^{k-1}\beta_i^km_i^{\bar{y}(t)}(s,z)m_{k-i}^{\bar{y}(t)}(s,z)\rho_1(t-s,x,z)dz ds.
		\] 
		From the proof of (i), following the arguments used to show that\[ A_k(t,x,y(t))/g(t,y(t))^k \to 0 ~~\text{ and } ~~B_k(t,x,y(t))/g(t,y(t))^k \to 0,\] we can also show that \[\bar{A}_k(t,x,\bar{y}(t))/g(t,y(t))^k \to 0 ~~\text{ and } ~~\bar{B}_k(t,x,\bar{y}(t))/g(t,y(t))^k \to 0,\] uniformly in $\|\bar{y}(t)\| \geq \bar{r}(t), \|y(t)\| \leq r(t)$.
		Next  we show that, for $ 1\leq i \leq k-1$,  the following limit holds uniformly in $\|\bar{y}(t)\| \geq \bar{r}(t), \|y(t)\| \leq r(t)$
		\[\lim\limits_{t \to \infty}\int_{0}^{( 1- \eps) t}  \int_{\|z-\bar{y}(t)\|\leq \bar{p}(t)}\Big( \frac{g(t-s, \bar{y}(t)-[z])}{g(t,y(t))}\Big)^k \alpha(z)f_i(z)f_{k-i}(z) \rho_1(s,x,z)dz ds= 0.
		\]
		Following the same arguments that are detailed before \eqref{some}, it is enough to show that, for all $ 1\leq i \leq k-1$, 
		\[
		\lim\limits_{t \to \infty}\int_0^m\int_{\|z-\bar{y}(t)\|\leq \bar{p}(t)}\Big( \frac{g(t-s, \bar{y}(t)-[z])}{g(t,y(t))}\Big)^k \alpha(z)f_i(z)f_{k-i}(z) \rho_1(s,x,z)dz  ds = 0.
		\]
		uniformly in $\|\bar{y}(t)\| \geq \bar{r}(t), \|y(t)\| \leq r(t)$. The idea here is that, $\|\bar{y}\|$ as well as $\|y(t)\|$ can be bounded from above by $2\|z\|$ on the domain of integration. Therefore, repeating the arguments from \eqref{ff}, using Taylor's formula, given $\delta>0$ small. since $\|z-\bar{y}(t)\|\leq {p}(t)= o(t), \|y(t)\|\leq r(t) = o(t)$ and $\nabla\Phi(0)= 0$, along with the estimate \eqref{rho}, there exists $C>0$ such that, for all sufficiently large $t$ and all $1 \leq i \leq k-1$, 
		\[
		\int_0^m \int_{\|z-\bar{y}(t)\|\leq \bar{p}(t)}\Big( \frac{g(t-s, \bar{y}(t)-[z])}{g(t,y(t))}\Big)^k \alpha(z)f_i(z)f_{k-i}(z) \rho_1(s,x,z)dz ds \]\[\leq C\int_0^m e^{-(k-1)\mu s + 2\delta s}\frac{1}{s^{d/2}}\int_{\|z-\bar{y}(t)\|\leq  \bar{p}(t)}e^{\delta \|z\| - \frac{\|z-x\|^2}{cs}} dzds.
		\]
		Since $\|\bar{y}(t) \|\geq\bar{r}(t)$ and $\|z-\bar{y}(t)\|\leq  \bar{p}(t) = o(\bar{r}(t))$, we know, for sufficiently large $t$, $\|z-x\|\geq \bar{r}(t)/2$.  Thus, there exists $a>0$ such that, for sufficiently large $t$, 
		\[
		\delta \|z\| - \frac{\|z-x\|^2}{cs} \leq \delta \|x\| +\Big( \delta \|z-x\|- \frac{\|z-x\|^2}{cs}\Big) \leq  \delta \|x\| - \frac{\|z-x\|^2}{as}. 
		\]
		Therefore, there exists a constant $\tilde{C}>0$ such that, for all $1 \leq i \leq k-1$, 
		\[\lim\limits_{t \to \infty}\int_0^m \int_{\|z-\bar{y}(t)\|\leq {p}(t)}\Big( \frac{g(t-s, \bar{y}(t)-[z])}{g(t,y(t))}\Big)^k \alpha(z)f_i(z)f_{k-i}(z) \rho_1(s,x,z)dz ds\leq \]
		\[\leq \tilde{C}\lim\limits_{t \to \infty}\int_0^m e^{-(k-1)\mu s + \delta s}\Big(\frac{1}{s^{d/2}}\int_{\|z-x\|\geq \bar{r}(t)}e^{- \frac{\|z-x\|^2}{as}} dz \Big)ds  = 0,
		\] 
		if $\delta$ is sufficiently small. 
		This concludes the proof of (ii) for $k$.  
  \bibliographystyle{abbrev}

		\bibliography{heatkernelrefs}
		
	\end{document}